\documentclass[10pt,twoside]{scrarticle}
\usepackage[utf8]{inputenc}

\title{\uppercase{\textbf{\large{The cyclosyntomic regulator of a number field}}}}

\author{\textsc{tess bouis}\thanks{Institute for Advanced Study, Princeton, \texttt{tbouis@ias.edu}} \textsc{ and} $q$-\textsc{uentin gazda}\footnote{Sorbonne Université and Université Paris Cité, CNRS, IMJ-PRG, 75005 Paris, \texttt{quentin@gazda.fr}}}
\date{}
\usepackage[left=3cm, right=3cm, top=1in, bottom=1in, headheight=2cm]{geometry}

\usepackage{fancyhdr}
\usepackage{stmaryrd}

\usepackage{mathdots}

\usepackage{indentfirst}

\usepackage{xfrac}

\usepackage{amsmath, amscd, amsfonts, amssymb, amsthm,todonotes,eurosym,enumitem,relsize}

\usepackage[utf8]{inputenc}
\usepackage{dirtytalk}
\usepackage[T1]{fontenc}
\usepackage{mathrsfs}

\usepackage[colorlinks]{hyperref}
\usepackage[figure,table]{hypcap}
\definecolor{imperialred}{RGB}{237, 41, 57}
\definecolor{royalblue}{RGB}{64, 106, 212}
\definecolor{link}{RGB}{11,0,128}
\definecolor{gren}{RGB}{32,130,63}
\hypersetup{
	bookmarksnumbered,
	pdfstartview={FitH},
	citecolor=royalblue,
	linkcolor=imperialred,
	urlcolor=link,
	linktocpage = true
	pdfpagemode={UseOutlines}
}

\usepackage{xcolor}
\usepackage{comment}

\usepackage[object=pgfhan]{pgfornament}

\usepackage{graphicx}
\usepackage{array}
\usepackage{csquotes}
\usepackage{extarrows}

\usepackage{needspace}

\input{xy}
\xyoption{all}
\usepackage{tikz-cd}
\usepackage{textcomp}
\usepackage{colonequals}

\usepackage{sectsty}
\sectionfont{\textsc\centering}
\subsectionfont{\textbf\large}
\subsubsectionfont{\textbf\normalsize}

\usepackage{fancyhdr}

\newlength{\outermargin} 
\newlength{\mar} 
\newlength{\len}
\newlength{\temp}
\newtheorem{theorem}{Theorem}[section]
\newtheorem{lemma}[theorem]{Lemma}

\newtheorem{corollary}[theorem]{Corollary}
\newtheorem{proposition}[theorem]{Proposition}

\newtheorem{theoremintro}{Theorem}

\theoremstyle{definition}
\newtheorem{remark}[theorem]{Remark}

\newtheorem{examples}[theorem]{Examples}
\newtheorem{construction}[theorem]{Construction}
\newtheorem{definition}[theorem]{Definition}
\newtheorem{notation}[theorem]{Notation}

\newtheorem{variant}[theorem]{Variant}
\newtheorem{warning}[theorem]{Warning}

\DeclareMathOperator{\Z}{\mathbb{Z}}
\DeclareMathOperator{\Q}{\mathbb{Q}}

\DeclareMathOperator{\C}{\mathbb{C}}

\renewcommand{\epsilon}{\varepsilon}
\newcommand{\Li}{\operatorname{Li}}

\DeclareMathOperator{\G}{\mathbb{G}}

\newcommand{\nbrrg}{\mathcal{O}_K[\Delta_K^{-1}]}

\newcommand{\CR}{\mathcal{C}_{R}}
\newcommand{\can}{\text{can}}

\newcommand{\CRm}{\mathcal{C}_{R,m}}

\newcommand{\Frob}{\mathrm{Frob}}

\newcommand{\Cyc}{\mathrm{cyc}}
\newcommand{\Hab}{\mathrm{Hab}}

\newcommand{\CycSyn}{\mathrm{CycSyn}}

\newcommand{\W}{\mathbb{W}}
\newcommand{\qW}{q\text{-}\mathbb{W}}
\newcommand{\Wm}{\mathbb{W}_m}
\newcommand{\qWm}{q\text{-}\mathbb{W}_m}
\newcommand{\qWmm}{q\text{-}\mathbb{W}_{m'}}

\newcommand{\gh}{\mathrm{gh}}

\renewcommand{\ge}{\geqslant}
\renewcommand{\le}{\leqslant}
\renewcommand{\geq}{\geqslant}

\newcommand{\Hom}{\operatorname{Hom}}

\newcommand{\fib}{\operatorname{fib}}

\newcommand{\reg}{\mathrm{reg}}

\newcommand{\dlog}{\mathrm{dlog}}

\DeclareFontFamily{U}{MnSymbolC}{}
\DeclareFontShape{U}{MnSymbolC}{m}{n}{
	<-5.5> MnSymbolC5
	<5.5-6.5> MnSymbolC6
	<6.5-7.5> MnSymbolC7
	<7.5-8.5> MnSymbolC8
	<8.5-9.5> MnSymbolC9
	<9.5-11.5> MnSymbolC10
	<11.5-> MnSymbolCb12
}{}

\mathcode`A="7041 \mathcode`B="7042 \mathcode`C="7043 \mathcode`D="7044
\mathcode`E="7045 \mathcode`F="7046 \mathcode`G="7047 \mathcode`H="7048
\mathcode`I="7049 \mathcode`J="704A \mathcode`K="704B \mathcode`L="704C
\mathcode`M="704D \mathcode`N="704E \mathcode`O="704F \mathcode`P="7050
\mathcode`Q="7051 \mathcode`R="7052 \mathcode`S="7053 \mathcode`T="7054
\mathcode`U="7055 \mathcode`V="7056 \mathcode`W="7057 \mathcode`X="7058
\mathcode`Y="7059 \mathcode`Z="705A

\usepackage[bbgreekl]{mathbbol}
\DeclareSymbolFontAlphabet{\mathbb}{AMSb}
\DeclareSymbolFontAlphabet{\mathbbl}{bbold}
\newcommand{\Prism}{{\mathlarger{\mathbbl{\Delta}}}}

\usepackage{soul}
\usepackage{enumitem}
\numberwithin{equation}{theorem}

\usepackage{calrsfs}
\DeclareMathAlphabet{\pazocal}{OMS}{zplm}{m}{n}

\begin{document}
	
	\pagestyle{fancy}
	\fancyhead[EC]{TESS BOUIS AND QUENTIN GAZDA}
	\fancyhead[OC]{CYCLOSYNTOMIC REGULATOR}
	\fancyfoot[C]{\thepage}

    \maketitle

    \vspace{-3em}
	
	\begin{abstract}
        We construct a $q$-deformation of the $p$-adic regulator of a number field, called the \emph{cyclosyntomic regulator}, building on the Habiro ring of Garoufalidis--Scholze--Wheeler--Zagier. The key new ingredient in our construction is a refinement of Sulyma's norm maps in prismatic cohomology, which interpolate between classical powers and Frobenius maps at various prime numbers $p$. Furthermore, we compute the values of the cyclosyntomic regulator at units of the form $1-\zeta$, where $\zeta$ is a root of unity.
	\end{abstract}

	{
		\hypersetup{linkcolor=black}
		\tableofcontents
	}

    \section{Introduction}

    Motivated by questions on the emerging theory of $p$-adic $L$-functions, 
    Leopoldt \cite{leopoldt_arithmetik_1962} defined the $p$-adic regulator map of a number field $K$
    $$\reg_p : \mathcal{O}_K^\times \lhook\joinrel\longrightarrow  \prod_{\sigma \in \Hom(K,\C_p)} \C_p^{\times} \xlongrightarrow{\log_p} \prod_{\sigma \in \Hom(K,\C_p)} \C_p$$
    as a non-archimedean analogue of the classical regulator map. This $p$-adic regulator, via the $p$-adic class number formula proved by Colmez \cite{colmez_residu_1988}, is in particular related to the residue at $s=1$ of the $p$-adic Dedekind zeta function of $K$. The Leopoldt conjecture \cite{leopoldt_arithmetik_1962,leopoldt_p-adische_1975}, stating that the image of the $p$-adic regulator map is a lattice of rank $r_1 + r_2 - 1$, is however still open beyond the case where $K$ is an abelian extension of $\Q$ or of an imaginary quadratic number field (see \cite[Chapter~X.3]{meukirch_cohomology_2008} for a review). 

    The modern theory of regulators, as initiated by Beilinson \cite{beilinson_higher_1984}, recasts the regulator map of a number field as an instance of higher Chern class maps to Deligne cohomology. In the $p$\nobreakdash-adic setting, it was first noted by Kato \cite[Remark~$3.5$]{kato_p-adic_1987} that syntomic cohomology should be a good analogue of Deligne cohomology, and such a cohomological approach to $p$-adic regulators was successfully used by Gros and Kurihara \cite{gros_regulateurs_1990,gros_regulateurs_1994} to open the study of the $p$-adic Beilinson conjectures. 

    Recall that syntomic cohomology was first introduced by Fontaine--Messing \cite{fontaine_p-adic_1987} in terms of $p$-adic de Rham cohomology, and that a well-behaved integral refinement of their definition was introduced by Bhatt--Morrow--Scholze \cite{bhatt_topological_2019} and Bhatt--Scholze \cite{bhatt_prisms_2022} in terms of prismatic cohomology. Following the latter approach, the $p$-adic regulator map of a number field $K$ is then induced by the syntomic first Chern class
    $$c_1^{\text{syn}} : \G_m(R) \longrightarrow \text{H}^1_{\text{syn}}(R,\Z_p(1))$$
    of $R:=\mathcal{O}_K$, where the weight one syntomic complex of $R$ is defined, in terms of the Breuil--Kisin twisted $\Prism_R\{1\}$ and Nygaard filtered $\mathcal{N}^{\ge 1} \Prism_R\{1\}$ prismatic cohomology of $R$, by
    $$R\Gamma_{\text{syn}}(R,\Z_p(1)) := \fib\Big(\mathcal{N}^{\ge 1} \Prism_R \{1\} \xlongrightarrow{\can - \Frob_p^{\Prism}\{1\}} \Prism_R \{1\}\Big).$$

    The goal of this article is to introduce and study a decompleted refinement of this story (\emph{i.e.},~where the cohomology groups are not $p$-complete), which in particular allows us to interpolate the $p$-adic regulator maps between different prime numbers~$p$.

    \begin{definition}[Cyclosyntomic cohomology; see Definition~\ref{definitioncyclosyntomiccohomology}]\label{definitionintrocyclosyntomiccohomology}
        Let $K$ be a number field, $\Delta_K$ be the discriminant of $K$, $R$ be the étale $\Z$-algebra $\nbrrg$, and $d \ge 2$ be an integer. The \emph{cyclosyntomic complex of $R$} is the complex
        $$R\Gamma_{\CycSyn}(R,\Z(1)^{(d)}) := \left[ \mathcal{N}^{\ge 1} \mathcal{C}_R\{1\} \xlongrightarrow{\can - \Frob^{\Cyc}_d\{1\}} \mathcal{C}_R^{(d)}\{1\} \right]$$
        in the derived category $\mathcal{D}(\Z)$, where $\mathcal{N}^{\ge 1} \CR\{1\}$ and $\mathcal{C}_{R}^{(d)}\{1\}$ are $\mathbb{Z}[q]$-modules constructed as ``decompleted versions'' of the prismatic objects $\mathcal{N}^{\ge 1} \Prism_R \{1\}$ and $\Prism_R\{1\}$. 
    \end{definition}

    To define the objects appearing in Definition~\ref{definitionintrocyclosyntomiccohomology}, we use in a crucial way the recent definition of Habiro ring $\mathcal{H}_R$ of Garoufalidis--Scholze--Wheeler--Zagier \cite{garoufalidis_habiro_2024}, which can be seen as a ring of ``analytic functions in one variable $q$ at roots of unity''. For instance, $\mathcal{H}_{\Z} := \lim_{m \ge 1} \Z[q]^\wedge_{(q^m-1)}$ where the limit of the completions is taken over the set of integers $m \ge 1$ partially ordered by divisibility.  
    We refer to Section~\ref{subsectionHabirorings} for the definition of these Habiro rings, and to \cite{scholze_habiro_2025,wagner_q-hodge_2025,garoufalidis_explicit_2025} for recent progress towards a more general theory of Habiro cohomology. 
    
    More precisely, we introduce the notion of \emph{cyclotomic ring} $\mathcal{C}_R$ of a commutative ring $R$, as a reduced version of the Habiro ring $\mathcal{H}_R$ (Examples~\ref{examplescyclotomicring}\,(2)), in terms of the theory of big $q$\nobreakdash-Witt vectors developed by Wagner \cite{wagner_q-witt_2024}. For instance, $\mathcal{C}_{\Z} := \lim_{m \ge 1} \Z[q]/(q^m-1)$. 
    These cyclotomic rings $\mathcal{C}_R$ are commutative $\Z[q]$-algebras, and we define its cousins $\mathcal{N}^{\ge 1} \mathcal{C}_R \{1\}$ and $\mathcal{C}_R^{(d)}\{1\}$ as $\CR$\nobreakdash-modules which are compatible with the definitions of the Nygaard filtered and Breuil\nobreakdash--Kisin twisted prismatic complexes \cite{bhatt_topological_2019,bhatt_prisms_2022,bhatt_absolute_2022,antieau_prismatic_2023}. 

    \begin{theoremintro}[First Chern class; see Sections~\ref{subsectionthecyclosyntomiccohomologyofanumberfield} and~\ref{subsec:the-first-chern-class}] \label{theoremintrofirstChernclass}
        Let $K$ be a number field, and $R$ be the étale $\Z$-algebra $\nbrrg$. For every prime number $p$, there is a natural \emph{$p$-adic realisation map}
        $$R\Gamma_{\CycSyn}(R,\Z(1)^{(p)}) \longrightarrow R\Gamma_{q\emph{syn}}(R,\Z_p(1))$$
        to the syntomic cohomology of $R$ relative to the $q$-prism $(\Z_p[\![q-1]\!],[p]_q)$. Moreover, for every integer $d \ge 2$, there exists a natural \emph{cyclosyntomic first Chern class}
        $$c_1^{\CycSyn} : \G_m(R)[-1] \longrightarrow R\Gamma_{\CycSyn}(R,\Z(1)^{(d)})$$
        which recovers, for $d=p$ and after post-composing with the previous $p$-adic realisation map, the syntomic first Chern class of $R$.
    \end{theoremintro}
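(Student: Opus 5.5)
The plan has two parts. First build the $p$-adic realisation map term by term on the two-term complex of Definition~\ref{definitionintrocyclosyntomiccohomology}. Then construct the first Chern class by a $q$-logarithmic-derivative formula and compare it with the syntomic one after realisation.

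\textbf{The realisation map.} Since $R=\nbrrg$ is étale over $\Z$, I expect its $q$-de Rham, and hence $q$-prismatic, cohomology relative to $(\Z_p[\![q-1]\!],[p]_q)$ to be concentrated in degree zero. It should be given by the unique étale lift of $R$ over $\Z_p[\![q-1]\!]$, completed in the $(p,q-1)$-adic topology, with Frobenius the unique lift $\phi$ satisfying $q\mapsto q^p$. The steps are as follows.
\begin{enumerate}
\item Project $\CR$, defined as a limit over $m$ of $q$-Witt-type quotients, onto the terms indexed by powers of $p$. Then pass to the $(p,q-1)$-completion. Because $\Z[q]/(q^{p^k}-1)$ becomes $(p,q-1)$-adically close to $\Z_p[\![q-1]\!]$ in the limit over $k$, this yields a ring map $\CR\to \Prism_{R,q}:=R\otimes\Z_p[\![q-1]\!]$ (completed).
\item Compose $\CR\to\Prism_{R,q}$ with the maps on $\mathcal{N}^{\ge 1}$ and on the twists. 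Nygaard level one corresponds to divisibility by $[p]_q$ after Frobenius, so the Breuil--Kisin twist $\{1\}$ should correspond to the $q$-analogue of $\frac{1}{q-1}$-type normalisations.
\item Check that $\Frob^{\Cyc}_p\{1\}$ maps to $\phi/[p]_q$ and that the map is compatible with $\can$. This gives a map of fibres.
\end{enumerate}

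\textbf{The first Chern class.} For $u\in R^\times$, I would define $c_1^{\CycSyn}(u)$ as the class in $\mathrm{H}^1$ of the two-term complex represented by a pair: an element $\ell\in\mathcal{C}_R^{(d)}\{1\}$ together with a witness $0\in\mathcal{N}^{\ge1}$, with $\ell$ a $q$-analogue of $\frac{1}{d}\log\bigl(\tilde u^{d}/\Frob_d(\tilde u)\bigr)$. Here $\tilde u$ is the Teichmüller-type lift of $u$ into $\CR$, coming from big $q$-Witt vectors. Concretely:
\begin{enumerate}
\item Take $\tilde u\in\CR^\times$.
\item Observe that $\tilde u^d/\Frob^{\Cyc}_d(\tilde u)$ lies in $1+(\text{topologically nilpotent ideal})$, by the norm-map refinement interpolating powers and Frobenii mentioned in the abstract.
\item Apply the logarithm, divided appropriately, which lands in $\mathcal{C}_R^{(d)}\{1\}$ by design of that twisted module.
\end{enumerate}
Naturality in $R$ and multiplicativity in $u$ follow from multiplicativity of Teichmüller lifts. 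Thus we obtain a map of abelian groups $\G_m(R)\to \mathrm{H}^1$. Since the complex is a two-term complex of discrete modules and $\G_m(R)[-1]$ is a discrete group in degree one, this lifts to the derived category, for instance via the standard cone description.

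\textbf{Comparison.} For $d=p$, after realisation the element becomes $\frac1p\log(\phi(\tilde u)/\tilde u^p)$ in $\Prism_{R,q}\{1\}$. This is exactly the known formula for the syntomic $c_1$ of a unit in the $\delta$-ring setting, namely the image of $\mathrm{dlog}$ via the $\delta$-structure (Bhatt--Lurie / Antieau--Krause--Nikolaus). I expect the main obstacle to be step 2 of the Chern class construction: showing that the $d$-th norm quotient is genuinely a unit close to $1$ in the decompleted ring $\CR$ for composite $d$, and that the logarithm converges there. Completeness must hold at all primes dividing $d$ simultaneously. I would try to treat this one prime at a time via the product decomposition of the limit over $m$, using the $p$-typical $\delta$-structures and Sulyma-type norms, and then glue.
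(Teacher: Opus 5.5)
Your $p$-adic realisation is essentially the paper's (Remarks~\ref{remarkpadicrealisationcyclotomic} and~\ref{remarkcomparisontosyntomiccohomology}), with one correction. Because the $q$-Dwork congruences are Frobenius-twisted, the $p^r$-th $q$-ghost coordinate must be twisted by $\varphi_p^r$ before the family assembles into $\lim_r R[q]/(q^{p^r}-1)$.

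\textbf{First gap: there is no lift $\tilde u\in\CR$.} The transition maps of $\CR$ are the Frobenii $F_{m'/m}$, which act as restriction on $q$-ghost coordinates. Hence $F_{m'/m}(\Pi_{m'}(u))=\Pi_m(u)^{m'/m}\neq\Pi_m(u)$. Compatible families exist only for elements with compatible roots, such as $[\zeta]$. So you must work level by level with $\Pi_m(u)\in\qWm(R)$ and glue additively; this is why the twists $\{1\}$ have transition maps divided by $m'/m$.

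Worse, at a fixed level your formula carries no information. For $d|e|dm$, both $\Pi_{dm/m}(\Pi_m(u))=\Pi_{dm}(u)$ and $\Frob_d^{\Cyc}(\Pi_m(u))$ have $e$-th $q$-ghost coordinate $u^{dm/e}$. Their ratio is therefore exactly $1$ in $\mathcal{C}^{(d)}_{R,dm}$.

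The class lives at first order, and the missing ingredient is the Habiro ring, which your proposal never uses. The paper proceeds as follows.
\begin{itemize}
\item Choose a lift $y\in\mathcal{H}_{R,m}/(q^m-1)^2$ of $\Pi_m(u)$.
\item Push $y$ forward by the lifted norm $\tilde\Pi_d:\mathcal{H}_{R,m}/(q^m-1)^2\to\mathcal{H}_{R,dm}/(q^m-1)(q^{dm}-1)$ of Proposition~\ref{propositionliftedcyclotomicpower}. This is the actual ``norm refinement'', proved via Wagner's identification and a diagram chase.
\item Compare with $\Frob_d^{\Hab}(y)$. One has $\tilde\Pi_d(y)\equiv\Frob_d^{\Hab}(y)$ modulo $[dm]_q$, checked on $q$-ghost coordinates using that $\Phi_e(q)$ is inverted for $d\nmid e$.
\item Then $\log$ is just $t\mapsto t-1$ on a square-zero ideal, and $\frac1d$ means division by $[d]_{q^m}$ (Lemma~\ref{lemmadivisionbym/e}).
\end{itemize}
There is no convergence issue at the primes dividing $d$, so the obstacle you single out is not the real one.

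\textbf{Second gap: the passage to $\mathcal{D}(\Z)$ is not canonical.} Once lifts are chosen, $\frac1d\log(\tilde\Pi_d(y)/\Frob_d^{\Hab}(y))$ depends on $y$: replacing $y$ by $y\exp(x)$ changes it by $(\can-\Frob_d^{\Cyc}\{1\})(x)$. Your recipe therefore gives only a homomorphism $\G_m(R)\to\mathrm{H}^1_{\CycSyn}(R,\Z(1)^{(d)})$. Its lifts to $\G_m(R)[-1]\to R\Gamma_{\CycSyn}(R,\Z(1)^{(d)})$ form a torsor under $\mathrm{Ext}^1_{\Z}(\G_m(R),\mathrm{H}^0_{\CycSyn}(R,\Z(1)^{(d)}))$, and the cone description does not pick one. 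Your representative with witness $0$ would compose to zero in $\mathcal{N}^{\ge1}\CR\{1\}$.

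The natural class must instead refine the cyclotomic logarithm $\dlog_{\Cyc}$, just as Bhatt--Lurie's syntomic $c_1$ refines the prismatic logarithm. This logarithm is not zero in general. For example, for $R=\Z$ and $m=2$, $\Pi_2(-1)=q$ has no lift of order $2$ in $\G_m(\Z[q]/(q^2-1)^2)$.

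The paper handles this in two steps.
\begin{itemize}
\item It first builds $\dlog_{\Cyc}$ from the extensions $0\to\mathcal{N}^{\ge1}\CRm\{1\}\to E_m\to\G_m(R)\to0$. These are glued in $m$ by homotopies $h_{m,m'}$ coming from $\tilde\Pi_{m'/m}$.
\item It then observes that $s_d(y,z)=\frac1d\log(\tilde\Pi_d(y)/\Frob_d^{\Hab}(y))$ is a homomorphism on $E_m$ restricting to $\can-\Frob_d^{\Cyc}\{1\}$ on $\mathcal{N}^{\ge1}\CRm\{1\}$. This is a canonical null-homotopy, hence a canonical factorisation through the fibre, compatible with the $h_{m,m'}$.
\end{itemize}

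Finally, your comparison step inverts the ratio: Kato's formula is $\frac1p\log(u^p/\Frob_p(u))$.
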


    By the work of Bhatt--Lurie \cite{bhatt_absolute_2022}, the theory of integral syntomic cohomology is equipped with a natural theory of Chern classes, which is built out of their notion of prismatic logarithm~$\log_{\Prism}$. A~derived refinement $\dlog_{\Prism}$ of their construction was more recently introduced by Mao \cite{mao_prismatic_2024}, and we similarly define the notion of \emph{cyclotomic logarithm}
    $$\dlog_{\Cyc} : \G_m(R)[-1] \longrightarrow \mathcal{N}^{\ge 1} \CR\{1\}$$
    to construct the cyclosyntomic first Chern class of Theorem~\ref{theoremintrofirstChernclass}. The key to define this cyclotomic logarithm is a refinement of Sulyma's norm maps in prismatic cohomology \cite{sulyma_prisms_2023,mao_prismatic_2024}, which we call the \emph{cyclotomic norms} (Proposition~\ref{prop:q-angeltveit}). 
    
    Note that, although one may define the objects $\mathcal{N}^{\ge 1} \CR\{1\}$ and $\CR\{1\}$ purely in terms of the $q$-Witt vectors of \cite{wagner_q-witt_2024}, we do need to use the Habiro rings of \cite{garoufalidis_habiro_2024,wagner_q-hodge_2025} to define the previous cyclotomic logarithm (Construction~\ref{constructiontruncatedcyclotomiclogarithm}). Relatedly, one can prove that this cyclotomic logarithm cannot be lifted to a nontrivial notion of Habiro logarithm (Remark~\ref{remarkHabirologarithmcannotexist}), which is our main reason for working at the level of cyclotomic rings rather than at the level of Habiro rings.

    Using these cohomological constructions, we now return to the number theoretic questions that motivated the study of these regulators. In the $p$-adic setting, an important application of syntomic cohomology is Kato's cohomological interpretation of explicit reciprocity laws \cite{kato_explicit_1991}. In this paper, Kato gives in particular the fundamental expression
    $$c_1^{\text{syn}} : u \longmapsto \frac{1}{p} \log\Big(\frac{u^p}{\Frob_p(u)}\Big) \in \text{H}^1_{\text{syn}}(R,\Z_p(1))$$
    for the syntomic first Chern class of a unit $u \in R^{\times}$ (\cite[Corollary~2.9]{kato_explicit_1991}, see also \cite[Proposition~4.1]{gros_regulateurs_1990}). The following result is the analogous computation in our context.

    \begin{theoremintro}[See Corollary~\ref{corollaryformulaforfirstChernclass}]\label{theoremintrocomputationfirstChernclass}
        Let $K$ be a number field, $R$ be the étale $\Z$-algebra $\nbrrg$, and $d \ge 2$ be an integer. For every unit $u \in R^\times$, the cyclosyntomic first Chern class at $u$ is given by
        $$c_1^{\CycSyn} : u \longmapsto \frac{1}{d} \log\Big(\frac{\tilde{\Pi}_d(\tilde{u})}{\Frob_d^{\Hab}(\tilde{u})}\Big) \in \emph{H}^1_{\CycSyn}(R,\Z(1)^{(d)})$$
        where $\tilde{\Pi}_d$ is a $q$-deformation of the $d^{th}$ power map (Proposition~\ref{propositionliftedcyclotomicpower}), $\Frob_d^{\Hab}$ is the $d^{th}$ Habiro Frobenius (Construction~\ref{constructionHabiroFrobenii}), and $\tilde{u} := (\Pi_m^{\Hab}(u))_{m \ge 1}$ is any sequence of Habiro lifts $\Pi_m^{\Hab}(u)$ in $\mathcal{H}_{R,m}/(q^m-1)^2$ of the cyclotomic norms $\Pi_m(u) \in \mathcal{H}_{R,m}/(q^m-1)$ of $u$ (Proposition~\ref{prop:q-angeltveit}).
    \end{theoremintro}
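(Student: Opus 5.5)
We would follow the strategy of Kato's computation \cite[Corollary~2.9]{kato_explicit_1991}, adapted to the cyclotomic setting. Since $\mathcal{N}^{\ge 1}\CR\{1\}$ and $\mathcal{C}_R^{(d)}\{1\}$ are modules concentrated in degree zero (as $R=\nbrrg$ is étale over $\Z[\Delta_K^{-1}]$), the cyclosyntomic complex is a two-term complex. Hence $\text{H}^1_{\CycSyn}(R,\Z(1)^{(d)})$ is the cokernel of $\can-\Frob^{\Cyc}_d\{1\}$, possibly up to an $\text{H}^0$-contribution that we expect to vanish in weight one. The class $c_1^{\CycSyn}(u)$ is then obtained from the map $\G_m(R)[-1]\to$ fibre in the following way. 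The cyclotomic logarithm $\dlog_{\Cyc}$ provides a nullhomotopy-type datum in $\mathcal{N}^{\ge 1}\CR\{1\}$, and the first Chern class is the degree-one element measuring the failure of $\can(\dlog_{\Cyc}u)$ and $\Frob^{\Cyc}_d(\dlog_{\Cyc}u)$ to agree. So the first step is to unwind Construction~\ref{constructiontruncatedcyclotomiclogarithm}: $\dlog_{\Cyc}(u)$ is obtained by taking the compatible system of cyclotomic norms $\Pi_m(u)\in\mathcal{H}_{R,m}/(q^m-1)$ from Proposition~\ref{prop:q-angeltveit}, choosing Habiro lifts $\Pi_m^{\Hab}(u)$ modulo $(q^m-1)^2$, and forming the $q$-logarithm (the ``$(q^m-1)$-adic $\dlog$''), whose ambiguity is exactly the choice of lifts.

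The second step is to compute the two maps on this representative. For $\can$, we would check that the canonical map sends the logarithmic representative built from $\tilde u$ to the element built from the lifted cyclotomic power $\tilde{\Pi}_d(\tilde u)$. This uses Proposition~\ref{propositionliftedcyclotomicpower}, which identifies how $\can$ interacts with the index $m\mapsto dm$ in the inverse system, namely as the $q$-deformed $d$-th power. For $\Frob^{\Cyc}_d\{1\}$, we would use that it is induced by the Habiro Frobenius $\Frob_d^{\Hab}$ of Construction~\ref{constructionHabiroFrobenii}, divided by the Breuil--Kisin twist factor. That factor is responsible for the normalisation $\frac1d$, exactly as the $\frac1p$ arises in Kato's formula from $\varphi(\text{BK-generator})=[p]_q\cdot(\ldots)$. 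Subtracting, and using that the logarithm is a homomorphism on the principal units $1+(q^m-1)\mathcal{H}_{R,m}/(q^m-1)^2$, we obtain $\frac1d\log\big(\tilde\Pi_d(\tilde u)/\Frob_d^{\Hab}(\tilde u)\big)$. Here the ratio is a unit congruent to $1$, because $\Pi_{dm}(u)$ and the Frobenius of $\Pi_m(u)$ agree modulo $q^{dm}-1$; this is the defining interpolation property of the cyclotomic norms. Hence the logarithm converges in the relevant truncation.

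The last step is independence of the choice of lifts: changing $\Pi_m^{\Hab}(u)$ by a factor $1+(q^m-1)x$ changes the expression by $(\can-\Frob_d^{\Cyc})$ of the corresponding element of $\mathcal{N}^{\ge1}\CR\{1\}$, so the class in $\text{H}^1$ is well defined. Compatibility with the $p$-adic realisation of Theorem~\ref{theoremintrofirstChernclass} then serves as a sanity check against Kato's formula when $d=p$.

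The main obstacle is the second step, specifically the identification of $\can$ on the derived/truncated cyclotomic logarithm with $\tilde\Pi_d$ rather than the naive $d$-th power. We would attack it by computing first at the universal case $u=t\in\Z[t^{\pm1}]$ (or its étale localisations), where the Habiro ring is explicit. We would then deduce the general case by naturality, since $\nbrrg$ is étale over $\Z[\Delta_K^{-1}]$ and all constructions are functorial in $R$.
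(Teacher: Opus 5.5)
Your end formula and your lift-independence check are the right ones, but there is a genuine gap in how you connect them to $c_1^{\CycSyn}$, and your plan for the step you call the crux would not work.

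\textbf{The connection to $c_1^{\CycSyn}$.} $\dlog^{(m)}_{\Cyc}$ is not an element attached to $u$. It is an extension class, namely the group $E_m$ of pairs $(y,z)$ with $\bar y=\Pi_m(z)$. Since $\Pi^{\Hab}_m(u)$ is not $\equiv 1$ modulo $q^m-1$, it has no ``logarithmic representative''. Likewise $\log\tilde\Pi_d(\tilde u)$ and $\log\Frob_d^{\Hab}(\tilde u)$ are meaningless separately, so you cannot apply $\can$ and $\Frob^{\Cyc}_d\{1\}$ to a representative and subtract. What produces a map $\G_m(R)[-1]\to R\Gamma_{\CycSyn}$ is a \emph{null-homotopy} of $(\can-\Frob^{\Cyc}_d\{1\})\circ\dlog^{(m)}_{\Cyc}$. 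Equivalently, it is a homomorphism $s_d:E_m\to\CRm^{(d)}\{1\}$ with $s_d\circ(\exp\times 1)=\can-\Frob^{\Cyc}_d\{1\}$. The class of $u$ depends on this choice: changing it by $\phi\in\Hom(\G_m(R),\CRm^{(d)}\{1\})$ changes the class by $\phi(u)$ in $\mathrm{H}^1$. Note that $\mathrm{H}^1$ is exactly the cokernel of $\can-\Frob^{\Cyc}_d\{1\}$, with no $\mathrm{H}^0$ correction. So the class is not determined by $\dlog_{\Cyc}$ alone. In the paper, $c_1^{\CycSyn}$ is \emph{defined} through the specific $s_d(y,z)=\frac1d\log(\tilde\Pi_d(y)/\Frob_d^{\Hab}(y))$ of Construction~\ref{constructionhomotopyforfirstChernclass}. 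The formula then follows at once, as $u\mapsto[s_d(\Pi^{\Hab}_m(u),u)]$. The real content is therefore two facts:
\begin{enumerate}
\item[(a)] $s_d$ is well defined, i.e.\ $\tilde\Pi_d(y)\equiv\Frob^{\Hab}_d(y)$ modulo $[dm]_q$ in $\mathcal{H}^{(d)}_{R,dm}$. This is checked on $q$-ghost coordinates with $d\mid e\mid dm$, and it is precisely where inverting $\Phi_e(q)$ for $d\nmid e$ is needed. $\Frob^{\Hab}_d$ only lands in this localised ring, so ``agree modulo $q^{dm}-1$'' is not right as stated.
\item[(b)] The restriction identity $s_d\circ(\exp\times1)=\can-\Frob^{\Cyc}_d\{1\}$, which your step~3 only asserts.
\end{enumerate}

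\textbf{The crux.} Point (b) is your ``main obstacle'', and the universal-case route cannot work. Habiro rings, the identification $\mathcal{H}_{R,m}/(q^m-1)\cong\qWm(R)$, $\Frob^{\Hab}_d$ and $\tilde\Pi_d$ are only set up for étale $\Z$-algebras with their unique Frobenius lifts. Neither $\Z[t^{\pm1}]$ nor its étale localisations are étale over $\Z$. Moreover $t\mapsto u$ does not intertwine $t\mapsto t^p$ with $\varphi_{R^\wedge_p}$, so there is nothing to transport by naturality. No universal case is needed. Using the injectivity of Lemma~\ref{lemmaqghostmapinjectiveHabiro}, one checks on ghost components that $\tilde\Pi_d(1+x)=1+dx$. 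Indeed $(1+\Phi_e c)^d\equiv 1+d\Phi_e c$ modulo $\Phi_e^2$ for $e\mid m$, and the corresponding component is $\equiv 1$ modulo $\Phi_e$ for $e\nmid m$ (Corollary~\ref{corollaryliftedcyclotomicpowercommutativediagram}). It follows that $\frac1d\big((1+d\,\can(x))/(1+\Frob^{\Hab}_d(x))-1\big)=\can(x)-\Frob^{\Cyc}_d\{1\}(x)$. Finally, to pass to the limit over $m$ you also need the maps $s_d$ to be compatible with the transition maps and the homotopies $h_{m,m'}$ of Proposition~\ref{propositioncompatiblehomotopies}, which your proposal does not address.
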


    Unwinding the definitions, Theorem~\ref{theoremintrocomputationfirstChernclass} boils down to constructing suitable homotopies of the desired form modulo $q^m-1$ which are compatible between different integers $m \ge 1$. We do so in Construction~\ref{constructionhomotopyforfirstChernclass}, after introducing the relevant Frobenius map on the Habiro ring of $R$ in Section~\ref{subsectionHabirorings}. 

    Although slightly intractable for completely general computations, the previous formula for the (cyclo)syntomic first Chern class can be made more explicit on the subgroup of cyclotomic units, \emph{i.e.}, on the subgroup generated by elements of the form $1-\zeta \in K^\times$ for $\zeta$ a root of unity. We refer to \cite[Chapter~8]{washington_1997} for a survey on cyclotomic units, and simply point out that cyclotomic units form a subgroup of finite index in the group of units of a cyclotomic field. 
    
    In the context of regulators, the role of these cyclotomic units was highlighted by Deligne's construction of a variation of mixed Hodge--Tate structure $\mathcal{L}\!\!\!\>\mathit{og}^H$ on $\mathbb{P}^1(\C)\setminus\{0,1,\infty\}$, called the \emph{\hbox{polylogarithm} variation}, which splits at roots of unity \cite{deligne_groupe_1989}. More concretely, this specific phenomenon at roots of unity implies that the special values of the first polylogarithm function
    $$\Li_1(z) := \sum_{k \ge 1} \frac{z^k}{k}$$
    at roots of unity can be seen as natural classes in weight one Deligne cohomology \cite{beilinson_interpretation_1994,huber_classical_1998}. A similar $p$-adic story of polylogarithms was initiated by Coleman \cite{coleman_dilogarithms_1982} (see also \cite[3.2]{deligne_groupe_1989}), who in particular defined the first $p$-adic polylogarithm function
    $$\Li_1^{(p)}(z) := \sum_{\substack{k \ge 1 \\ p \nmid k}} \frac{z^k}{k}$$
    whose special values at roots of unity were later proved to appear naturally as classes in weight one syntomic cohomology \cite{gros_regulateurs_1990}. 

    Given the prominent role of roots of unity in the context of Habiro cohomology, the following result can be seen as an attempt to better understand the relation between polylogarithms and cyclotomic units.
    
    \begin{theoremintro}[First $q$-polylogarithm as cyclosyntomic Chern class; see Theorem~\ref{theoremmainLi1cyclosyntomic}]\label{theoremintrofirstpolylogarithm}
        Let $K$ be a number field, $R$ be the étale $\Z$-algebra $\nbrrg$, and $d \ge 2$ be an integer. For every root of unity $\zeta \in R \setminus \{\pm 1\}$, the cyclosyntomic first Chern class at $1-\zeta \in R^\times$ is given by
        $$c_1^{\CycSyn} : 1 - \zeta \longmapsto - \Li_1^{(d)}([\zeta])_q \in \emph{H}^1_{\CycSyn}(R,\Z(1)^{(d)})$$
        where
        $$\Li_1^{(d)}(z)_q := \sum_{\substack{k \ge 1 \\ d \nmid k}} \frac{z^k}{[k]_q}$$
        is the first $q$-polylogarithm seen as a convergent $q$-power series in a suitable sense (Section~\ref{subsectionthefirstqpolylogarithm}) and $[\zeta]$ is a natural sequence of Habiro lifts of the cyclotomic norms of $\zeta$ (Notation~\ref{notationclassofzeta}). 
    \end{theoremintro}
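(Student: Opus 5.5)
The natural route is to deduce Theorem~\ref{theoremintrofirstpolylogarithm} from the general formula of Theorem~\ref{theoremintrocomputationfirstChernclass}, applied to $u = 1-\zeta$. The first step is to choose an explicit sequence of Habiro lifts $\tilde{u}$ of the cyclotomic norms $\Pi_m(1-\zeta)$. The obvious candidate is $\tilde{u} = 1 - [\zeta]$, where $[\zeta]$ is the sequence of Notation~\ref{notationclassofzeta}. Since the cyclotomic norms are multiplicative and compatible with the Teichmüller-type class of $\zeta$, I expect $\Pi_m(1-\zeta)$ to be computed as $1-[\zeta]$ modulo $q^m-1$. This should amount to a $q$-analogue of the identity $\prod_{\eta^d=1}(1-\eta x) = 1-x^d$, as used for Sulyma's norms.

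With this choice, the formula becomes
$$c_1^{\CycSyn}(1-\zeta) = \frac{1}{d}\log\Big(\frac{\tilde{\Pi}_d(1-[\zeta])}{\Frob_d^{\Hab}(1-[\zeta])}\Big).$$
The second step is to compute the two terms.
\begin{itemize}
\item For the Frobenius, $\Frob_d^{\Hab}([\zeta]) = [\zeta]^d$, at least up to the relevant $q$-twist, so $\Frob_d^{\Hab}(1-[\zeta]) = 1-[\zeta]^d$.
\item For the $q$-deformed power map, I expect $\tilde{\Pi}_d(1-[\zeta])$ to be given by a $q$-Pochhammer-type product $\prod_{i=0}^{d-1}(1-q^i[\zeta])$, or something equivalent modulo $(q^m-1)^2$ on each level. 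This should follow from Proposition~\ref{propositionliftedcyclotomicpower}.
\end{itemize}

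The third step is to expand the logarithm formally. Using $\log(1-x) = -\sum_k x^k/k$, the numerator contributes
$$-\sum_{k\ge 1}\frac{[\zeta]^k}{k}\sum_{i=0}^{d-1}q^{ik},$$
and the denominator contributes $-\sum_k [\zeta]^{dk}/k$. I then need an identity matching these, modulo the relevant ideals, with $-d\sum_{d\nmid k}[\zeta]^k/[k]_q$. The useful relation is
$$\sum_{i=0}^{d-1}q^{ik} = \frac{[dk]_q}{[k]_q},$$
which is $d$ when $q=1$. The terms with $d\mid k$ should cancel against the Frobenius term, leaving the $d\nmid k$ sum, with the factor $1/d$ absorbing the multiplicity. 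At $q=1$ this is exactly Kato's computation, which yields $\Li_1^{(p)}$.

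The main obstacle is making this rigorous. The series do not converge naively, and the two sides need not agree on the nose: they should agree only up to elements in the image of $\can - \Frob^{\Cyc}_d\{1\}$, so as classes in $\mathrm{H}^1_{\CycSyn}$. My plan for this is as follows.
\begin{enumerate}
\item Work levelwise modulo $(q^m-1)^2$ in $\mathcal{H}_{R,m}$, where $\zeta$ has finite order $N$. Then $[\zeta]^k$ is periodic, and the series can be regrouped into finite sums over residues mod $N$ multiplied by convergent $q$-series, following Section~\ref{subsectionthefirstqpolylogarithm}.
\item Show that the discrepancy between $\frac{1}{d}\sum_k \frac{[\zeta]^k}{k}\frac{[dk]_q}{[k]_q}$ and $\sum_k \frac{[\zeta]^{dk}}{dk}\cdot\frac{[dk]_q}{[k]_q}$ versus the $q$-integers is a coboundary.
\item Check compatibility of these identifications as $m$ varies over divisibility.
\end{enumerate}
I expect step 2, controlling the replacement of $k$ by $[k]_q$, to be the delicate point. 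The hypothesis $\zeta \ne \pm 1$ should enter to guarantee that $1-[\zeta]$ is a unit at all levels, so that the logarithms are defined.
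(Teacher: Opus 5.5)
Your overall plan is the paper's: apply Corollary~\ref{corollaryformulaforfirstChernclass} with an explicit Habiro lift and expand the logarithm. However, the lift you choose is wrong, and the argument breaks at the first step.

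\textbf{The lift $1-[\zeta]$ does not reduce to $\Pi_m(1-\zeta)$.} In $q$-ghost coordinates, $\Pi_m(1-\zeta)=((1-\zeta)^{m/e})_{e\mid m}$ (Notation~\ref{notationqteichmullerlift}). The $e$-th coordinate of $1-[\zeta]$ is $1-\zeta^{1/e}$. Already for $e=1$ you are comparing $(1-\zeta)^m$ with $1-\zeta$, which differ in general. The correct lift is the length-$m$ $q$-Pochhammer product $\prod_{0\le j<m}(1-q^j[\zeta])$ (Lemma~\ref{lemmaliftof1-zeta}). The identity $\prod_{\eta^e=1}(1-\eta x)=1-x^e$ that you quote is exactly what proves this, applied modulo $\Phi_e(q)$ with $x=\zeta^{1/e}$.

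\textbf{Consequences of the wrong lift.} With your choice, the expression $\frac1d\log\bigl(\tilde{\Pi}_d(\tilde u)/\Frob_d^{\Hab}(\tilde u)\bigr)$ is not even defined. Construction~\ref{constructionhomotopyforfirstChernclass} needs $\tilde{\Pi}_d(\tilde u)\equiv\Frob_d^{\Hab}(\tilde u)$ modulo $[dm]_q$, and this is deduced from $\tilde u\equiv\Pi_m(u)$. For $\tilde u=1-[\zeta]$, the coordinates $d\mid e\mid m$ give $(1-\zeta^{1/e})^d$ versus $1-\zeta^{d/e}$, which in general differ. Relatedly, $m$ never enters your expansion. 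So it cannot produce the factor $[m]_q$ carried by every element of $\CRm^{(d)}\{1\}\cong[m]_q\mathcal{H}^{(d)}_{R,m}/[m]_q^2\mathcal{H}^{(d)}_{R,m}$. The cancellation of the $d\mid k$ terms that you anticipate also fails: a remainder $\frac{[\zeta]^{dk}}{dk}\bigl(d-[d]_{q^{dk}}\bigr)$ survives.

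\textbf{Your $\tilde{\Pi}_d$ is also off.} By Proposition~\ref{propositionliftedcyclotomicpower}, $\tilde{\Pi}_d$ is simply the $d$-th power on the coordinates $d\mid e\mid m$. These are the only coordinates that matter, since $\mathcal{H}^{(d)}_{R,dm}/[m]_q[dm]_q$ is $d$-torsionfree.

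\textbf{What the correct lift gives.} There is no delicate step and no coboundary to find: the identity holds on the nose in $\CRm^{(d)}\{1\}$ for every $m$ coprime to $N$. One computes
\[
\frac1d\log\frac{\prod_{0\le j<m}(1-q^jT)^d}{\prod_{0\le j<m}(1-q^{jd}T^d)}=-\sum_{d\nmid k}\frac{q^{mk}-1}{q^k-1}\,\frac{T^k}{k}=-[m]_q\sum_{d\nmid k}\frac{[k]_{q^m}}{[k]_q}\,\frac{T^k}{k}.
\]
The $d\mid k$ terms cancel exactly, because the same factor $\sum_{0\le j<m}q^{jk}=[m]_{q^k}$ appears in the numerator and, after $k\mapsto dk$, in the denominator. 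This factor is also where $[m]_q$ comes from. The single congruence $[m]_q[k]_{q^m}\equiv k\,[m]_q$ modulo $[m]_q^2$ then replaces $k$ by $[k]_q$; this is the step you flagged as delicate. Next, the relation $[k+m]_q=[k]_q+q^k[m]_q$ of Construction~\ref{constructionfirstpolylogarithmformal} folds the series into the truncated class
\[
-[m]_q\,\frac{1}{1-T^m}\sum_{0<k<m,\,d\nmid k}\frac{T^k}{[k]_q}
\]
(Proposition~\ref{propositionkeycomputation}). Compatibility in $m$ is then automatic.

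\textbf{On the hypothesis $\zeta\neq\pm1$.} It is used to make $1-\zeta$ a unit of $R$. The lift is then automatically a unit, since it reduces to a unit modulo a square-zero ideal.
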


    In particular, via the $p$-adic realisation map of Theorem~\ref{theoremintrofirstChernclass}, the cyclosyntomic cohomology classes of Theorem~\ref{theoremintrofirstpolylogarithm} also induce natural $q$-polylogarithm classes in the syntomic cohomology of $R$ relative to the $q$-prism. The proof of Theorem~\ref{theoremintrofirstpolylogarithm} essentially relies on the fact that cyclotomic units $u$ admit a natural explicit lift $\tilde{u}$ to the Habiro ring, which makes it possible to compute the Habiro Frobenius in Theorem~\ref{theoremintrocomputationfirstChernclass}.

    We end this introduction by mentioning two further potential directions of research. First, it is possible to assemble the cyclosyntomic complexes of Definition~\ref{definitionintrocyclosyntomiccohomology} for various $d \ge 2$ into a nontrivial complex of abelian groups. The resulting cyclosyntomic complex computes extensions of Tate objects in a category of $\CR$-modules equipped with compatible $d^{th}$ Frobenius maps for all integers $d \ge 2$. This phenomenon will be discussed by the second author in an epilogue to this article \cite{gazda-epilog}.
    
    Moreover, we expect that the first cyclosyntomic Chern class constructed here should be part of a more general theory of higher Chern classes, which would relate the higher $K$-groups of a number field, cyclosyntomic complexes of higher weights, and higher $q$-polylogarithms. While such a general theory seems currently out of reach, we expect that ongoing progress in the theory of Habiro cohomology may provide insight for further progress in this direction.

\subsection*{Notation.}

We denote by $\Delta_K$ the discriminant of a number field $K$. 
The $q$-analog $1+q+\cdots+q^{m-1} \in \Z[q]$ of an integer $m \ge 1$ is denoted by $[m]_q$.
For every integer $e \ge 1$, we denote by $\Phi_e(q) \in \Z[q]$ the $e^{th}$ cyclotomic polynomial. 
Given an integer $m \ge 1$ and a prime number $p$, we denote by $v_p(m)$ the $p$-adic valuation of $m$.
Given integers $a,b \ge 1$, we denote by $(a,b)$ (resp. $[a,b]$) the greatest common divisor (resp. the least common multiple) of $a$ and $b$.  
Given a commutative ring $R$, we denote by $\mathcal{D}(R)$ the derived $\infty$-category of $R$-modules.
We refer to \cite[Sections~I.1 and I.4]{atiyah_group_1969} for the definition and basic properties of $\Lambda$-rings. We call a $\Lambda$-ring $R$ perfect if its Adams operations $\psi^n : R \rightarrow R$ are isomorphisms. 

Finally, fix an integer $N \ge 1$ for the rest of this article, which represents a finite set of prime numbers that will be discarded in our constructions. Limits of the form $\lim_{m \ge 1}$ will typically be taken over the set of integers $m \ge 1$ that are coprime to $N$ (\emph{i.e.}, $(m,N)=1$), partially ordered by divisibility. The choice of the integer $N$ will be important only in Section~\ref{sec:first-polylog}, where we will need the fact that the root of unity $\zeta \in R$ is of order dividing $N$; 
before this point, the reader is welcome to assume that $N=1$.

\subsection*{Acknowledgements.}

We would like to thank Michel Gros, Peter Scholze, Ferdinand Wagner, and Campbell Wheeler for many inspiring and insightful discussions surrounding this project.
This project has received funding from the SFB 1085 Higher Invariants, the Institute for Advanced Study, and the Simons Foundation. This work also marks the launch of the ANR-25-CE40-3307-01 Al$K$traZ: \emph{Algebraic $K$\nobreakdash-theory, trace maps, and the Zagier conjecture}, of which both authors are members. We are pleased to express our gratitude to the ANR and to the anonymous experts for their valuable support of the project.

\section{Norm maps on $q$-Witt vectors}

In this section, we develop the necessary tools on big $q$-Witt vectors that we will use in the next sections. More precisely, we construct norm maps on the big $q$-Witt vectors of a commutative ring (Section~\ref{subsectionnormsonqWittvectors}), following the analogous theory of Angeltveit for classical big Witt vectors \cite{angeltveit_norm}, and introduce the notions of cyclotomic rings and of cyclotomic Frobenius (Section~\ref{subsectioncyclotomicrings}).

\subsection{Review of Witt vectors}\label{sectionreviewbigWittvectors}

In this subsection, we review the classical theory of big Witt vectors, and refer to the more extensive review \cite[Section~1]{hesselholt_deRham-Witt} for more details.

Let $R$ be a commutative ring. Given an integer $m \ge 1$, the \textit{$m$-truncated big Witt vectors $\Wm(R)$ of $R$} is the commutative ring characterised\footnote{The fact that this indeed characterises $\W_m(R)$ is a consequence of Lemma~\ref{lemmadwork}.} by the facts that it is abstractly isomorphic to the set $\prod_{e|m} R$, and that the map
$$\gh : \W_m(R) \longrightarrow \prod_{e|m}{R},\quad x := (x_e)_{e|m} \longmapsto (\mathrm{gh}_{e}(x))_{e|m}:=\left(\sum_{d|e}{dx_d^{e/d}}\right)_{e|m}$$
where the commutative ring structure on the target is defined componentwise, is a ring homomorphism. The map $\gh_e : \W_m(R) \rightarrow R$ is called the \emph{$e^{th}$ ghost coordinate}\footnote{This terminology is justified by the fact that the ghost map $\gh : \W_m(R) \rightarrow \prod_{e|m} R$ is injective for every flat $\Z$-algebra $R$ (or, slightly more generally, for every commutative ring $R$ which is $p$-torsionfree for every prime number $p$ dividing $m$).} (whereas the map $\W_m(R) \rightarrow R, \, (x_e)_{e|m} \mapsto x_e$ is called the \emph{$e^{th}$ Witt coordinate}).

Big Witt vectors are related to lifts of Frobenius via the following important lemma (\cite[Lemma~1.1]{hesselholt_deRham-Witt}). 

\begin{lemma}[Dwork's lemma]\label{lemmadwork}
    Let $R$ be a commutative ring, and $m \ge 1$ be an integer. Assume that for every prime number $p$ dividing $m$, there exists a ring homomorphism $\varphi_p : R \rightarrow R/p^{v_p(m)}R$ such that $\varphi_p(x) \equiv x^p$ modulo $pR$.\footnote{The statement \cite[Lemma~1.1]{hesselholt_deRham-Witt} is stated with the slightly stronger assumption that there exists a ring homomorphism $\varphi_p : R \rightarrow R$ lifting the Frobenius endomorphism of $R$. However, the proof of \cite[Lemma~1.1]{hesselholt_deRham-Witt} adapts readily to our context, which has the advantage to include all smooth $\Z$-algebras.} Then for every element $(x_e)_{e|m} \in \prod_{e|m} R$, the following are equivalent:
    \begin{enumerate}
        \item $(x_e)_{e|m} \in \prod_{e|m} R$ is in the image of the ghost map $\gh$;
        \item for every prime number $p$ dividing $m$ and every integer $e \ge 1$ satisfying $p | e |m$, we have $x_e \equiv \varphi_p(x_{e/p})$ modulo $p^{v_p(e)}R$.
    \end{enumerate}
\end{lemma}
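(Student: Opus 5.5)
The plan is to follow the classical proof of Dwork's lemma, as in \cite[Lemma~1.1]{hesselholt_deRham-Witt}, and to check that each use of a global Frobenius lift only requires $\varphi_p$ modulo $p^{v_p(m)}$.

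\textbf{Congruence lemma.} Fix $p \mid m$ and set $v = v_p(m)$. If $a \equiv b$ modulo $p^j R$ with $j \ge 1$, then $a^{p^k} \equiv b^{p^k}$ modulo $p^{j+k}R$. This follows from the binomial theorem by induction on $k$. Combined with $\varphi_p(x) \equiv x^p$ modulo $p$, it gives
$$\varphi_p(x^{p^{k-1}}) = \varphi_p(x)^{p^{k-1}} \equiv x^{p^k} \pmod{p^{k}R}$$
for $k \le v$. This is the right statement in $R/p^{v}R$.

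\textbf{(1) $\Rightarrow$ (2).} Take $x_e = \gh_e(w) = \sum_{d \mid e} d\, w_d^{e/d}$ with $p \mid e \mid m$, and write $k = v_p(e) \le v$. Split the divisors $d \mid e$ according to whether $v_p(d) = k$ or $v_p(d) < k$.
\begin{itemize}
\item If $v_p(d) = k$, then $d$ is divisible by $p^k$, so the term $d\,w_d^{e/d}$ is $0$ modulo $p^k$.
\item If $v_p(d) < k$, then $d \mid e/p$, so these are exactly the divisors of $e/p$.
\end{itemize}
Applying $\varphi_p$ gives $\varphi_p(x_{e/p}) = \sum_{d \mid e/p} d\, \varphi_p(w_d)^{e/(pd)}$. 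Set $j = v_p(d)$. The congruence lemma, applied to $\varphi_p(w_d) \equiv w_d^p$ modulo $p$ and raised to the power $e/(pd)$, whose $p$-adic valuation is $k-1-j$, yields agreement with $w_d^{e/d}$ modulo $p^{k-j}$. After multiplying by $d$, the term agrees modulo $p^k$. The congruence holds in $R/p^{v}R$, hence in $R/p^kR$, which proves (2).

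\textbf{(2) $\Rightarrow$ (1).} Construct $w$ by induction on divisors $e \mid m$, ordered by divisibility. We need $e\, w_e = x_e - \sum_{d \mid e,\, d<e} d\, w_d^{e/d}$, so it suffices to show that the right-hand side is divisible by $e$. Since $R$ need not be torsionfree, this divisibility alone does not produce $w_e$. Following Hesselholt, I would instead show that the right-hand side lies in $p^{v_p(e)}R$ for every $p$, and then use that $e$ is a product of these prime powers. The step I expect to be the main obstacle is the combination across the different primes dividing $e$, together with the non-uniqueness of $w_e$ when $R$ has torsion.

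To handle this, I would first argue in the universal case. Reduce to the ring $R$ itself by noting that the statement "$y \in p^{v_p(e)}R$ for all $p \mid e$ implies $y \in eR$" holds by the Chinese remainder theorem on $R/eR \cong \prod_p R/p^{v_p(e)}R$. The $p$-divisibility of $x_e - \sum_{d<e} d\,w_d^{e/d}$ is then exactly the computation of the previous step, run in reverse. The terms with $v_p(d) < v_p(e)$ telescope against $\varphi_p(x_{e/p})$, using the inductive hypothesis $x_{e/p} = \gh_{e/p}(w)$. The remaining terms with $v_p(d) = v_p(e)$ are already divisible by $p^{v_p(e)}$.

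Any choice of $w_e$ works, since only the ghost components are prescribed. This also yields the footnoted characterisation once $R$ is torsionfree.
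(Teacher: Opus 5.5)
Your proposal is correct and follows the paper's route. The paper gives no separate argument but defers to Hesselholt's proof of \cite[Lemma~1.1]{hesselholt_deRham-Witt}; you reproduce that induction and check that only $\varphi_p$ modulo $p^{v_p(m)}$ is ever used. One correction to your exposition: the remark that divisibility by $e$ ``does not produce $w_e$'' when $R$ has torsion is mistaken, since $y \in eR$ is exactly what gives a (non-unique) $w_e$ with $e\,w_e = y$. So the ``universal case'' detour is unnecessary, and it is the step $eR = \bigcap_{p \mid e} p^{v_p(e)}R$ (from the Chinese remainder theorem) that closes the induction.
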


\begin{definition}[Big Witt vectors]\label{definitionbigWittvectors}
    Let $R$ be a commutative ring. Given integers $m,m' \ge 1$ satisfying $m | m'$, there exists a natural \emph{restriction map} of commutative rings
    $$\text{R}_{m'/m} : \W_{m'}(R) \longrightarrow \W_m(R), \quad (x_e)_{e|m'} \longmapsto (x_e)_{e|m},$$
    and the \emph{big Witt vectors $\W(R)$ of $R$} is the commutative ring
    $$\W(R) := \lim_{m \ge 1} \W_m(R)$$
    where the transition maps are given by these restriction maps. Similarly, if one restricts to integers $m \ge 1$ which are the powers of a given prime number $p$, then this limit defines the ring of \emph{$p$-typical Witt vectors of $R$}.
\end{definition}

\begin{construction}[Frobenii and Verschiebungen]\label{constructionfrobeniiandverschiebungen}
    Let $R$ be a commutative ring. Given integers $m,m' \ge 1$ satisfying $m | m'$, and $d := \frac{m'}{m}$, there also exist natural \emph{Frobenius and Verschiebung maps}
    $$F_{d} : \W_{m'}(R) \longrightarrow \W_m(R) \quad \quad V_{d} : \W_m(R) \longrightarrow \W_{m'}(R)$$
    of commutative rings and of additive abelian groups, respectively. These maps are given, in ghost coordinates, by $F_{d} : (x_e)_{e|m'} \mapsto (x_{de})_{e|m}$ and $V_{d} : (x_e)_{e|m} \mapsto (dx_{\frac{e}{d}}\mathbbl{1}_{d|e})_{e|m'}$. They satisfy the relations $F_d \circ V_d = d$, 
    and $F_d \circ V_{d'} = V_{d'} \circ F_d$ for every integer $d' \ge 1$ coprime to $d$. 
\end{construction}

\begin{remark}
    The big Witt vectors $\W_m(R)$ of a commutative ring $R$ corresponds to the zeroth term of the big de Rham--Witt complex of $R$ (\cite{hesselholt_deRham-Witt}).
\end{remark}

\begin{remark}\label{remarkAinf}
    The limit $\lim_{m \ge 1} \W_m(R)$ along the Frobenius maps of Construction~\ref{constructionfrobeniiandverschiebungen} does not usually have a name. However, when $R:=\mathcal{O}_{\C_p}$ is the ring of integers of the $p$-adic complex numbers, and the limit is taken over the powers of the prime number $p$, this construction corresponds to Fontaine's period ring $A_{\text{inf}}$ in $p$-adic Hodge theory (\cite[Lemma~3.2]{bhatt_integral_2018}).
\end{remark}

\begin{notation}[Teichmüller lift]\label{notationteichmullerlift}
    Let $R$ be a commutative ring, and $m \ge 1$ be an integer. The \emph{Teichmüller lift on $R$} is the natural morphism of multiplicative monoids
    $$\Pi_m : R \longrightarrow \W_m(R)$$
    given by $x \mapsto (x^{m/e})_{e|m}$ in ghost coordinates.\footnote{The Teichmüller lift of $x$ is usually denoted by $[x]$. We choose the notation $\Pi_m(x)$ to avoid a potential confusion with the notation for $q$-integers $[k]_q \in \Z[q]$.} In particular, the Teichmüller lift is a section of the first ghost coordinate $\gh_1 : \W_m(R) \rightarrow R$.
\end{notation}

\subsection{Review of $q$-Witt vectors}

In this subsection, we review Wagner's theory of $q$-Witt vectors, as introduced in \cite[Section~2]{wagner_q-witt_2024}.

\begin{definition}[Big $q$-Witt vectors]\label{definitionqbigWittvectors}
    Let $R$ be a commutative ring. For every integer $m \ge 1$, the \emph{$m$-truncated big $q$-Witt vectors $\qWm(R)$ of $R$} is the commutative $\Z[q]$-algebra
    $$\qWm(R) := \W_m(R)[q]/\mathbb{I}_m$$
    where $\mathbb{I}_m \subseteq \W_m(R)[q]$ is the ideal generated by
    \begin{enumerate}[label=(\roman*)]
        \item $(q^e-1)\text{im}(V_{m/e})$, for every integer $e \ge 1$ satisfying $e|m$, and
        \item $\text{im}([e/d]_{q^d} V_{m/e} - V_{m/d} \circ F_{e/d})$, for all integers $d,e \ge 1$ satisfying $d|e|m$.
    \end{enumerate}
\end{definition}

\begin{examples}\label{examplesqWittvectors}
    Let $m \ge 1$ be an integer. The big $q$-Witt vectors of $R$ 
    can be made explicit in the following cases.
    \begin{enumerate}
        \item\label{item:Z} ($R=\Z$) $\qW_m(\mathbb{Z}) \cong \Z[q]/(q^m-1)$. More generally, for every perfect $\Lambda$-ring $R$ (\textit{e.g.}, $R=\Z_p$ for any prime number $p$, $R$ any commutative $\Q$-algebra, $R$ any perfect prism, or $R=\Z[T^{\Q}]$), $\qWm(R) \cong R[q]/(q^m-1)$ (\cite[Corollary~2.37]{wagner_q-witt_2024}).
        \item ($R=\nbrrg$) $\qWm(R) \cong \mathcal{H}_{R,m}/(q^m-1)$ for every number field $K$, where $R=\nbrrg$ and $\mathcal{H}_{R,m}$ is the $m$-truncated Habiro ring of $R$ (\cite[Theorem~2.9]{wagner_q-hodge_2025}).
        \item ($R=\Z[T]$) $\qW_m(\mathbb{Z}[T]) \cong \sum_{e|m}[e]_{q^{m/e}}\Z[T^{m/e},q]/(q^m-1) \subseteq \Z[T,q]/(q^m-1)$ (\cite[Proposition~2.36]{wagner_q-witt_2024}). Equivalently, $\qWm(R)$ is naturally identified with the kernel of the operator
        $$\mathbb{Z}[T,q]/(q^m-1)\longrightarrow \mathbb{Z}[T,q]/(q^m-1), \quad f(T,q) \longmapsto (q-1)\nabla_q(f(T,q))$$
        where $\nabla_q(f(T,q))$ is the $q$-derivative of $f(T,q)$ (see \cite[Theorem~4.27]{wagner_q-witt_2024} for a similar computation after $(q-1)$-completion).
    \end{enumerate}
\end{examples}

\begin{construction}[Frobenii and Verschiebungen]\label{constructionqfrobeniiandverschiebungen}
    Let $R$ be a commutative ring. Given integers $m, m' \ge 1$ satisfying $m|m'$, and $d := \frac{m'}{m}$, the big $q$-Witt vectors of $R$ are equipped with natural \emph{Frobenius and Verschiebung maps}
    $$F_d : \qWmm(R) \longrightarrow \qWm(R) \quad \quad V_d : \qWm(R) \longrightarrow \qWmm(R)$$
    of $\Z[q]$-algebras and of $\Z[q]$-modules, respectively (\cite[proof of Lemma~2.9]{wagner_q-witt_2024}). These are characterised by their $\Z[q]$-linearity and compatibility with the Frobenius and Verschiebung maps on big Witt vectors (Construction~\ref{constructionfrobeniiandverschiebungen}). They satisfy the relations $F_d \circ V_d = d$ and $V_d \circ F_d = [d]_{q^m}$.
\end{construction}

\begin{remark}\label{remarkuniversalpropertyqWittvectors}
    The big $q$-Witt vectors $\qWm(R)$ of a commutative ring $R$, together with their Frobenius and Verschiebung maps, satisfy a universal property: namely, the system of $\Z[q]$-algebras $(\qWm(R))_{m \ge 1}$ is the initial $q$-FV-system over $R$ (\cite[Definition~2.8 and Lemma~2.9]{wagner_q-witt_2024}). Also note that for every integer $m \ge 1$, we have $q^m-1 \in \mathbb{I}_m$ (Definition~\ref{definitionqbigWittvectors}\,(i) for $e=m$), so that $\qWm(R)$ is a commutative $\Z[q]/(q^m-1)$-algebra.
\end{remark}

\begin{warning}
    Despite the name, big $q$-Witt vectors are not a $q$-deformation of big Witt vectors, \textit{i.e.}, the natural morphism of commutative rings $\W_m(R) \rightarrow \qWm(R)/(q-1)$ is in general not an isomorphism (\cite[Remark~2.11]{wagner_q-witt_2024}).
\end{warning}

\begin{remark}\label{remarknorestriction} 
    The restriction maps $R_{m'/m} : \W_{m'}(R) \rightarrow \W_m(R)$ of Definition~\ref{definitionbigWittvectors} do not extend to morphisms of commutative $\Z[q]$-algebras $R_{m'/m} : \qWmm(R) \rightarrow \qWm(R)$ for general commutative rings $R$ (\cite[2.14]{wagner_q-witt_2024}). In particular, it is not possible to define the ring of big $q$-Witt vectors $\qW(R) := \lim_{m \ge 1} \qWm(R)$ as in Definition~\ref{definitionbigWittvectors}. Instead, in light of Remark~\ref{remarkAinf} and of the relation established in \cite[Section~2.2]{wagner_q-hodge_2025} between $q$-Witt vectors and Habiro rings, we will rather consider the limit $\lim_{m \ge 1} \qWm(R)$ along the Frobenius maps of Construction~\ref{constructionqfrobeniiandverschiebungen} in this paper, which we will typically refer to as \emph{cyclotomic rings} (Section~\ref{subsectioncyclotomicrings}).
\end{remark}

\subsection{Alternative approach to $q$-ghost maps}

In \cite[2.13]{wagner_q-witt_2024}, Wagner constructs $q$-ghost maps for the big $q$-Witt vectors as some explicit quotient by the images of some Verschiebung maps. In this subsection, we give an alternative description of these $q$-ghost maps (Definition~\ref{definitionqghostmaps}), using the universal property of big $q$-Witt vectors (Remark~\ref{remarkuniversalpropertyqWittvectors}). 

\begin{construction}[Cyclotomic $q$-FV-system]\label{constructioncyclotomicqFVsystem}
    Let $R$ be a commutative ring. Here we define a structure of $q$-FV-system over $R$ on the system of commutative $\Z[q]$-algebras $(\prod_{e|m} R[q]/\Phi_e(q))_{m \ge 1}$, in the sense of \cite[Definition~2.8]{wagner_q-witt_2024}. To do this, for every integer $m \ge 1$, let 
    $$\W_m(R)[q]/(q^m-1) \longrightarrow \prod_{e|m} R[q]/\Phi_e(q)$$
    be the morphism of commutative $\Z[q]$-algebras characterised by the fact that it is given by $$x \longmapsto (\gh_{m/e}(x))_{e|m}$$ on the subring $\W_m(R) \subseteq \W_m(R)[q]/(q^m-1)$.\footnote{Note that the order of the ghost coordinates is reversed compared to the convention used for ghost maps in Section~\ref{sectionreviewbigWittvectors}. This is related to the fact that there are no restriction maps in the theory of big $q$-Witt vectors (Remark~\ref{remarknorestriction}).} Given integers $m,m' \ge 1$ satisfying $m|m'$, and $d := \frac{m'}{m}$, let 
    $$F_{d} : \prod_{e|m'} R[q]/\Phi_e(q) \longrightarrow \prod_{e|m} R[q]/\Phi_e(q) \quad \quad V_{d} : \prod_{e|m} R[q]/\Phi_e(q) \longrightarrow \prod_{e|m'} R[q]/\Phi_e(q)$$
    be the morphisms given by $F_d : (c_e(q))_{e|m'} \mapsto (c_e(q))_{e|m}$ and $V_d : (c_e(q))_{e|m} \mapsto (dc_e(q)\mathbbl{1}_{e|m})_{e|m'}$, respectively. These morphisms satisfy the relations $F_d \circ V_d = d$ and $V_d \circ F_d = [d]_{q^m}$, and are compatible with the Frobenius and Verschiebung maps on big Witt vectors, thus endowing the system $(\prod_{e|m} R[q]/\Phi_e(q))_{m \ge 1}$ with the structure of $q$-FV-system over $R$.
\end{construction}

\begin{definition}[$q$-ghost maps]\label{definitionqghostmaps}
    Let $R$ be a commutative ring. For every integer $m \ge 1$, the \emph{$q$-ghost map} on big $q$-Witt vectors is the morphism of commutative $\Z[q]$-algebras
    $$q\text{-}\gh : \qWm(R) \longrightarrow \prod_{e|m} R[q]/\Phi_e(q), \quad c \longmapsto (c_e(q))_{e|m} := (q\text{-}\gh_e(c))$$
    induced by the universal property of $(\qWm(R))_{m \ge 1}$ as the initial $q$-FV-system over $R$ (Remark~\ref{remarkuniversalpropertyqWittvectors} and Construction~\ref{constructioncyclotomicqFVsystem}). The map $q\text{-}\gh_e : \qWm(R) \rightarrow R[q]/\Phi_e(q)$ is called the \emph{$e^{th}$ $q$-ghost coordinate}.\footnote{This terminology is justified by the result \cite[Lemma~2.23]{wagner_q-witt_2024}, stating that the $q$-ghost map \hbox{$q$-$\gh : \qWm(R) \rightarrow \prod_{e|m} R[q]/\Phi_e(q)$} is injective for every flat $\Z$-algebra~$R$ (or, slightly more generally, for every commutative ring~$R$ which is $p$-torsionfree for every prime number $p$ dividing $m$).}
\end{definition}

\begin{remark}
    The $q$-ghost maps of Definition~\ref{definitionqghostmaps} agree with the $q$-ghost maps defined in \cite[2.13]{wagner_q-witt_2024}. Indeed, one can prove that the $q$-ghost maps of \cite[2.13]{wagner_q-witt_2024} define a map of $q$-FV-systems over $R$ from $(\qWm(R))_{m \ge 1}$ to $\prod_{e|m} R[q]/\Phi_e(q)$ (equipped with the $q$-FV-system structure from Construction~\ref{constructioncyclotomicqFVsystem}), and such a map must be unique by universality (Remark~\ref{remarkuniversalpropertyqWittvectors}). 
\end{remark}

The following result is an analogue of Lemma~\ref{lemmadwork} for big $q$-Witt vectors.

\begin{lemma}[$q$-Dwork's lemma]\label{lemmaqDwork}
    Let $R$ be a commutative ring, and $m \ge 1$ be an integer. Assume that for every prime number $p$ dividing $m$, there exists a ring homomorphism $\varphi_p : R \rightarrow R/p^{v_p(m)}R$ such that $\varphi_p(x) \equiv x^p$ modulo $pR$. Then for every element $(c_e(q))_{e|m} \in \prod_{e|m} R[q]/\Phi_e(q)$, the following are equivalent:
    \begin{enumerate}
        \item $(c_e(q))_{e|m} \in \prod_{e|m} R[q]/\Phi_e(q)$ is in the image of the $q$-ghost map $q$-$\gh$;
        \item there exists a lift $(\tilde{c}_e(q))_{e|m} \in \prod_{e|m} R[q]$ of $(c_e(q))_{e|m} \in \prod_{e|m} R[q]/\Phi_e(q)$ such that, for every prime number $p$ and every integer $e \ge 1$ satisfying $pe|m$, we have $\tilde{c}_e(q) \equiv \varphi_p(\tilde{c}_{pe}(q))$ in $(R/p^{v_p(m/e)})[q]$, where $\varphi_p : R[q] \rightarrow (R/p^{v_p(m/e)})[q]$ is the unique $\Z[q]$-linear ring homomorphism extending $\varphi_p : R \rightarrow R/p^{v_p(m/e)}R$.
    \end{enumerate}
\end{lemma}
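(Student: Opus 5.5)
We reduce the $q$-Dwork lemma to the classical Dwork lemma (Lemma~\ref{lemmadwork}), using an explicit description of the image of the $q$-ghost map. Throughout, write the $q$-ghost coordinates as in Construction~\ref{constructioncyclotomicqFVsystem}: on the subring $\W_m(R)$ the $e^{th}$ $q$-ghost coordinate is $\gh_{m/e}$, and the Verschiebung maps $V_{m/d}$ enter through their images.

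\textbf{Step 1: describe the image of $q$-$\gh$.} Since $\qWm(R)$ is a quotient of $\W_m(R)[q]$, the image of $q$-$\gh$ is the $\Z[q]$-submodule of $\prod_{e|m} R[q]/\Phi_e(q)$ generated by $q$-$\gh(\W_m(R))$. Every element of $\W_m(R)$ decomposes as $\sum_{d|m} V_{m/d}(\Pi_d(a_d))$ (Witt coordinates), which shows the image is generated over $\Z[q]$ by the elements $q$-$\gh(V_{m/d}(y))$ for $y\in\W_d(R)$. By Construction~\ref{constructioncyclotomicqFVsystem}, $q$-$\gh(V_{m/d}(y)) = \bigl(\tfrac{m}{d}\,q\text{-}\gh_e(y)\,\mathbbl{1}_{e|d}\bigr)_{e|m}$. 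Unwinding recursively, the image is
$$\Bigl\{\Bigl(\sum_{d:\,e|d|m} \tfrac{m}{d}\, f_d(q)\,\gh_{d/e}(\Pi_d(a_d))\Bigr)_{e|m} : a_d\in R,\ f_d(q)\in\Z[q]\Bigr\},$$
reduced modulo $\Phi_e(q)$ in each component. Here $\gh_{d/e}(\Pi_d(a_d)) = a_d^{e}$, since the $k^{th}$ ghost coordinate of $\Pi_d(a)$ is $a^{d/k}$.

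\textbf{Step 2: (1)$\Rightarrow$(2).} Lift the image element to $\tilde c_e(q) := \sum_{e|d|m} \tfrac{m}{d} f_d(q)\, a_d^{e}$, which is naturally defined in $R[q]$ using the same $f_d$ in every component. For a prime $p$ with $pe|m$, split the sum over $d$ with $e|d|m$ according to whether $pe|d$. For terms with $pe|d$ we have $\varphi_p(a_d^{pe}) \equiv a_d^{pe}$ up to the standard congruence $\varphi_p(x^{p^{k-1}y})\equiv x^{p^k y}$ modulo $p^k$, and the coefficient $\tfrac{m}{d}$ supplies the remaining powers of $p$ modulo $p^{v_p(m/e)}$. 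Terms with $v_p(d)=v_p(e)$ have coefficient $\tfrac{m}{d}$ divisible by $p^{v_p(m/e)}$, so they vanish. This is the same bookkeeping as in the classical proof of Dwork's lemma, and it is $\Z[q]$-linear in the $f_d$.

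\textbf{Step 3: (2)$\Rightarrow$(1).} This is the main obstacle. Given lifts $\tilde c_e$ satisfying the congruences, write $\tilde c_e(q)=\sum_i c_{e,i}q^i$ coefficientwise. Because $\varphi_p$ is $\Z[q]$-linear, for each $i$ the tuple $(c_{e,i})_{e|m}$, re-indexed by $k=m/e$, satisfies exactly condition (2) of Lemma~\ref{lemmadwork}: namely $x_k\equiv\varphi_p(x_{k/p})$ modulo $p^{v_p(k)}$. Hence it is $\gh(w_i)$ for some $w_i\in\W_m(R)$. Then $\sum_i q^i w_i\in\W_m(R)[q]$ maps under $q$-$\gh$ to $(\tilde c_e \bmod \Phi_e)_e$, which proves (1).

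The point to check carefully is that Lemma~\ref{lemmadwork} only requires $\varphi_p$ with values in $R/p^{v_p(m)}$, and that the moduli $p^{v_p(m/e)}$ match the Witt-side moduli $p^{v_p(k)}$ after the reversal $k=m/e$. They do: the direct implication follows from this index reversal, and the converse for (2)$\Rightarrow$(1) is then immediate. In particular, Step 3 makes the Step 1 description largely unnecessary, except to justify that the image is generated over $\Z[q]$ by $q$-$\gh(\W_m(R))$. For (1)$\Rightarrow$(2), one can simply take a preimage $\sum_i q^i w_i$ and set $\tilde c_e := \sum_i q^i \gh_{m/e}(w_i)$; the classical Dwork lemma applied to each $w_i$ then gives the congruences directly.
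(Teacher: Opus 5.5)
Your final argument is correct and is essentially the paper's proof. You lift along the surjection $\W_m(R)[q]\twoheadrightarrow \qWm(R)$, apply the classical Dwork lemma coefficientwise in $q$, and match the moduli $p^{v_p(m/e)}$ with $p^{v_p(k)}$ via the reindexing $k=m/e$; the paper packages exactly this as a commutative square relating $\gh:\W_m(R)[q]\to\prod_{e|m}R[q]$ to $q$-$\gh$. Steps~1--2 are superfluous and their bookkeeping is off, so drop them: by Notation~\ref{notationqteichmullerlift} one has $q$-$\gh_e(\Pi_d(a))=a^{d/e}$, not $a^e$, and the image of $q$-$\gh$ consists of finite sums of such terms, not a single term for each $d$.
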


\begin{proof}
    By construction, there is a commutative diagram of $\Z[q]$-algebras
    $$\begin{tikzcd}
        \W_m(R)[q] \ar[r,"\text{gh}"] \ar[d,two heads] & \prod_{e|m} R[q] \ar[r,two heads] & \prod_{e|m} R[q] / \Phi_e(q) \ar[d] \\
        \qWm(R) \arrow[rr,"q\text{-gh}"] & & \prod_{e|m} R[q]/\Phi_e(q)
    \end{tikzcd}$$
    where the top horizontal map is induced by the ghost map of Section~\ref{sectionreviewbigWittvectors} and the right vertical map is given by $(c_e(q))_{e|m} \mapsto (c_{m/e}(q))_{e|m}$. The desired result is then a consequence of Dwork's lemma (Lemma~\ref{lemmadwork}).
\end{proof}

\begin{remark}\label{remarknaiveqDwork}
    Let $R$ be a flat $\Z$-algebra. 
    Lemma~\ref{lemmaqDwork} suggests that big $q$-Witt vectors $\qWm(R)$ are a sort of ``Frobenius twisted'' version of $R[q]/(q^m-1)$.\footnote{This is directly related to the definition of the Habiro rings of a number field in \cite{garoufalidis_habiro_2024,wagner_q-hodge_2025}, where the Habiro ring $\mathcal{H}_{R,m}$ of an étale $\Z$-algebra $R$ is a sort of ``Frobenius twisted'' version of $R[q]^\wedge_{(q^m-1)}$.} More precisely, one can prove that the image of the natural projection map $R[q]/(q^m-1) \rightarrow \prod_{e|m} R[q]/\Phi_e(q)$ is given by the elements $(c_e(q))_{e|m} \in \prod_{e|m} R[q]/\Phi_e(q)$ for which there exists a lift $(\tilde{c}_e(q))_{e|m} \in \prod_{e|m} R[q]$ such that, for every prime number $p$ and every integer $e \ge 1$ satisfying $pe|m$, we have $\tilde{c}_e(q) \equiv \tilde{c}_{pe}(q)$ in $(R/p^{v_p(m/e)})[q]$. 
\end{remark}

\begin{notation}[$q$-Teichmüller lift]\label{notationqteichmullerlift}
    Let $R$ be a commutative ring, and $m \ge 1$ be an integer. The \emph{$q$-Teichmüller lift on $R$} is the morphism of multiplicative monoids
    $$\Pi_m : R \longrightarrow \qWm(R)$$
    defined as the composition of the Teichmüller lift $\Pi_m : R \rightarrow \Wm(R)$ of Notation~\ref{notationteichmullerlift} with the canonical map \hbox{$\W_m(R) \rightarrow \qWm(R)$}. In particular, the $q$-Teichmüller lift is a section of the first $q$-ghost coordinate $q\text{-}\gh_1 : \qWm(R) \rightarrow R[q]/\Phi_1(q) \cong R$, and is in general given by $x \mapsto (x^{m/e})_{e|m}$ in $q$\nobreakdash-ghost coordinates.
\end{notation}

\subsection{Norm maps on $q$-Witt vectors}\label{subsectionnormsonqWittvectors}

In \cite{angeltveit_norm}, Angeltveit constructs multiplicative norm maps on big Witt vectors, as a natural generalisation of the Teichmüller lift. In this subsection, we prove that big $q$-Witt vectors similarly admit multiplicative norm maps $\Pi_{m'/m} : \qWm(R) \rightarrow \qWmm(R)$ (Proposition~\ref{prop:q-angeltveit}). 

\begin{lemma}\label{lemmanormpreserveimageofqghostmap}
    Let $R$ be a commutative ring, and $m,m' \ge 1$ be integers satisfying $m | m'$. Assume that for every prime number $p$ dividing $m$, there exists a ring homomorphism $\varphi_p : R \rightarrow R/p^{v_p(m')}R$ such that $\varphi_p \equiv x^p$ modulo $pR$. Then for every element $c \in \qWm(R)$ with $q$-ghost coordinates $(c_e(q))_{e|m}$,  
    the element $$\Pi_{m'/m}((c_e(q))_{e|m}) := \big(c_{(e,m)}(q^{\frac{e}{(e,m)}})^{\frac{m'}{[e,m]}}\big)_{e|m'} \in \prod_{e|m'} R[q]/\Phi_e(q)$$ is in the image of the $q$-ghost map $q\text{-}\gh : \qWmm(R) \rightarrow \prod_{e | m'} R[q]/\Phi_e(q)$.
\end{lemma}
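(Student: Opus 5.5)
We will verify condition (2) of the $q$-Dwork lemma (Lemma~\ref{lemmaqDwork}) for the element $\Pi_{m'/m}((c_e(q))_{e|m})$, using the maps $\varphi_p : R \rightarrow R/p^{v_p(m')}R$ of the statement.

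\textbf{Step 1: choose lifts.} Since $c$ lies in the image of $q\text{-}\gh : \qWm(R) \rightarrow \prod_{e|m} R[q]/\Phi_e(q)$, the implication (1)$\Rightarrow$(2) of Lemma~\ref{lemmaqDwork} gives lifts $\tilde{c}_e(q) \in R[q]$, for $e|m$, such that
$$\tilde{c}_e(q) \equiv \varphi_p(\tilde{c}_{pe}(q)) \quad \text{in } (R/p^{v_p(m/e)})[q] \quad \text{whenever } pe|m.$$
This step only requires $\varphi_p$ modulo $p^{v_p(m)}$. For $e|m'$, we then define
$$\tilde{d}_e(q) := \tilde{c}_{(e,m)}(q^{e/(e,m)})^{m'/[e,m]} \in R[q].$$
This lifts the $e$-th coordinate of $\Pi_{m'/m}(c)$. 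Indeed, substituting $q \mapsto q^{e/(e,m)}$ sends $\Phi_{(e,m)}(q)$ to a multiple of $\Phi_e(q)$, because a primitive $e$-th root of unity raised to the power $e/(e,m)$ is a primitive $(e,m)$-th root of unity.

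\textbf{Step 2: check the congruences.} Fix a prime $p$ and an integer $e$ with $pe|m'$. Write $a := v_p(e)$, $b := v_p(m)$ and $n := v_p(m')$. We must prove
$$\tilde{d}_e \equiv \varphi_p(\tilde{d}_{pe}) \quad \text{modulo } p^{n-a}.$$
We split into two cases.

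\emph{Case $a < b$.} Then $(pe,m) = p(e,m)$ and $[pe,m] = [e,m]$, so $pe/(pe,m) = e/(e,m)$ and both coordinates use the same substitution $q \mapsto q^{e/(e,m)}$ and the same exponent $k := m'/[e,m]$. The hypothesis gives $\tilde{c}_{(e,m)} \equiv \varphi_p(\tilde{c}_{p(e,m)})$ modulo $p^{b-a}$. We then use the standard fact that $x \equiv y$ modulo $p^j$ with $j \ge 1$ implies $x^{p^s} \equiv y^{p^s}$ modulo $p^{j+s}$. Here $v_p(k) = n - b$, so raising to the power $k$ upgrades the congruence to modulo $p^{(b-a)+(n-b)} = p^{n-a}$. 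Substituting $q \mapsto q^{e/(e,m)}$ commutes with $\varphi_p$, since $\varphi_p$ is $\Z[q]$-linear.

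\emph{Case $a \ge b$.} Then $(pe,m) = (e,m) =: f$, $pe/f = p \cdot e/f$, and $[pe,m] = p[e,m]$. Writing $g := \tilde{c}_f(q^{e/f})$, the congruence to prove becomes
$$g(q)^{k} \equiv \varphi_p\big(g(q^p)\big)^{k/p} \quad \text{modulo } p^{n-a},$$
where $v_p(k/p) = n - a - 1$. Since $\varphi_p(x) \equiv x^p$ modulo $p$, and since $\varphi_p$ fixes $q$ while $(\sum_i r_i q^i)^p \equiv \sum_i r_i^p q^{pi}$ modulo $p$, we get $\varphi_p(g(q^p)) \equiv g(q)^p$ modulo $p$. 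Applying the same lifting fact with $p^{n-a-1}$ then gives a congruence modulo $p^{n-a}$. If $n - a - 1 = 0$, a congruence modulo $p$ is all that is needed, which is exactly what we have.

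\textbf{Main obstacle.} The main difficulty is the bookkeeping in the case $a < b$. One has to check that the exponent $m'/[e,m]$ carries exactly the $p$-adic valuation needed to upgrade a congruence modulo $p^{b-a}$ to one modulo $p^{n-a}$. Along the way, one must keep track of the fact that $\varphi_p$ is only defined modulo $p^{n}$, and apply the power-lifting lemma in rings of the form $(R/p^{n})[q]$.

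Once both cases are done, the implication (2)$\Rightarrow$(1) of Lemma~\ref{lemmaqDwork}, applied with $m'$ in place of $m$, concludes the proof. Its hypotheses are satisfied: for primes dividing $m'$ but not $m$, we can take $\varphi_p$ trivially modulo $p^{n}$ in the above congruences, because for such primes only the second case arises and $\tilde{c}$ is never compared at different levels.
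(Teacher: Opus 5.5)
Your main argument is correct and follows the paper's strategy: you verify condition (2) of Lemma~\ref{lemmaqDwork} at level $m'$ for the lifts $\tilde c_{(e,m)}(q^{e/(e,m)})^{m'/[e,m]}$, splitting cases on $v_p(e)$ versus $v_p(m)$.

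The one structural difference is how general $m\mid m'$ is handled. The paper first checks $\Pi_{m''/m'}\circ\Pi_{m'/m}=\Pi_{m''/m}$ on ghost coordinates and reduces to $m'/m$ prime. After that reduction, the case $v_p(e)>v_p(m)$ is vacuous, and each remaining case needs only a congruence modulo $p$ or one application of $x\equiv y\Rightarrow x^p\equiv y^p$ with one extra power of $p$. You instead treat arbitrary $m\mid m'$ at once, using $v_p(m'/[e,m])=n-\max(a,b)$ and the iterated lifting $x\equiv y \bmod p^j\Rightarrow x^{p^s}\equiv y^{p^s}\bmod p^{j+s}$. Your case $a\ge b$ absorbs what the paper's reduction makes vacuous. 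This avoids the composition identity, although the paper reuses that identity anyway in Remark~\ref{remarkcompatibilityupgradedcyclotomicpowers}. Your gcd/lcm bookkeeping in both cases is right, as is your care about working in $(R/p^{n})[q]$.

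Your final paragraph, however, does not hold up. The implication (2)$\Rightarrow$(1) of Lemma~\ref{lemmaqDwork} at level $m'$ needs, for \emph{every} prime $p\mid m'$, a ring map $\varphi_p:R\to R/p^{v_p(m')}R$ lifting Frobenius. Your second case also uses $\varphi_p(g(q^p))\equiv g(q)^p \bmod p$ for primes $p\nmid m$. The fact that $\tilde c$ is never compared across levels for such $p$ does not remove either requirement.

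There is no trivial choice of $\varphi_p$ here. The map $x\mapsto x^p$ is a ring map into $R/pR$, so it works when $v_p(m')=1$. For $v_p(m')\ge 2$, however, a lift may not exist at all. For example, for $R=\Z[\sqrt{p}]$ there is no Frobenius lift $R\to R/p^2R$: it would send $\sqrt{p}$ into $pR/p^2R$, yet must square to $p\neq 0$.

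The paper's proof tacitly uses the same lifts; for instance, with $m=1$ and $m'=p^2$, its prime-step reduction applies Lemma~\ref{lemmaqDwork} at level $p^2$. So the right fix is to assume Frobenius lifts for all primes dividing $m'$, rather than to argue them away. Such lifts are exactly what is available for the polynomial rings in Proposition~\ref{prop:q-angeltveit}, where the lemma is applied.
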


\begin{proof}
    First note that the morphism of multiplicative monoids $$\Pi_{m'/m} : \prod_{e|m} R[q]/\Phi_e(q) \rightarrow \prod_{e|m'} R[q]/\Phi_e(q)$$ is well-defined, because $\Phi_e(q)$ divides $\Phi_{(e,m)}(q^{\frac{e}{(e,m)}})$ in the commutative ring $\Z[q]$. Moreover, for any integers $m,m',m'' \ge 1$ satisfying $m|m'|m''$, we have the equality $$\Pi_{m''/m'} \circ \Pi_{m'/m} = \Pi_{m''/m}.$$ Indeed, this is a consequence of the series of equalities
    $$c_{((e,m'),m)}\big(q^{\frac{(e,m')}{((e,m'),m)}\cdot\frac{e}{(e,m')}}\big)^{\frac{m'}{[(e,m'),m]}\cdot\frac{m''}{[e,m']}} = c_{(e,m)}(q^{\frac{e}{(e,m)}})^{\frac{m'}{[(e,m'),m]}\cdot\frac{m''}{[e,m']}} = c_{(e,m)}(q^{\frac{e}{(e,m)}})^{\frac{m''}{[e,m]}}$$
    where the equality $\tfrac{m'}{[(e,m'),m]\cdot[e,m']}=\tfrac{1}{[e,m]}$ follows from the fact that for any integers $a,b,c \ge 0$ satisfying $c \le b$, we have the equality 
    $b-\max(\min(a,b),c)-\max(a,b)=-\max(a,c)$. 
    
    In particular, it suffices to prove the desired claim when $m'/m$ is a prime number, which we assume from now on.
    
    Let $(c_e(q))_{e|m} \in \prod_{e|m} R[q]/\Phi_e(q)$ be an element in the image of the $q$-ghost map. We use the $q$-Dwork lemma (Lemma~\ref{lemmaqDwork}) to prove that $\Pi_{m'/m}((c_e(q))_{e|m}) \in \prod_{e|m'} R[q]/\Phi_e(q)$ is in the image of the $q$-ghost map. Let $(\tilde{c}_e(q))_{e|m} \in \prod_{e|m} R[q]$ be a lift of $(c_e(q))_{e|m} \in \prod_{e|m} R[q]/\Phi_e(q)$ such that, for every prime number $p$ and every integer $e \ge 1$ satisfying $pe|m$, we have $\tilde{c}_e(q) \equiv \varphi_p(\tilde{c}_{pe}(q))$ in $(R/p^{v_p(m/e)})[q]$. It then suffices to prove that, for every prime number $p$ and every integer $e \ge 1$ satisfying $pe|m'$, we have $\varphi_p\big(\tilde{c}_{(pe,m)}(q^{\frac{pe}{(pe,m)}})^{\frac{m'}{[pe,m]}}\big) \equiv \tilde{c}_{(e,m)}(q^{\frac{e}{(e,m)}})^{\frac{m'}{[e,m]}}$ in $(R/p^{v_p(m'/e)})[q]$. We distinguish several cases.
    
    Assume first that $v_p(e)>v_p(m)$. Because $m'/m$ is a prime number, this implies that $m'/m=p$, so the condition $pe|m'$ can not be satisfied in this case,  
    and the desired statement is vacuously true.

    Assume now that $v_p(e)=v_p(m)$. If $v_p(e)=v_p(m')$, then $v_p(m'/e)=0$ and the desired statement is again vacuously true, so we assume that $v_p(e)=v_p(m)<v_p(m')$. Because $m'/m$ is a prime number, this implies that $m'=pm$ and that $v_p(m'/e)=1$. The desired statement then follows from the series of equalities
    $$\varphi_p\big(\tilde{c}_{(pe,m)}(q^{\frac{pe}{(pe,m)}})^{\frac{m'}{[pe,m]}}\big) = \varphi_p\big(\tilde{c}_{(e,m)}(q^{\frac{pe}{(e,m)}})\big)^{\frac{m'}{p\cdot[e,m]}} \equiv \tilde{c}_{(e,m)}(q^{\frac{e}{(e,m)}})^{p\cdot\frac{m'}{p\cdot [e,m]}} = \tilde{c}_{(e,m)}(q^{\frac{e}{(e,m)}})^{\frac{m'}{[e,m]}}$$
    where the congruence holds modulo $p$, using that $\varphi_p : R \rightarrow R/p^{v_p(m')}R$ is a lift of Frobenius. 

    Assume finally that $v_p(e)<v_p(m)$. In this case, the desired statement follows similarly from the series of equalities
    $$\varphi_p\big(\tilde{c}_{(pe,m)}(q^{\frac{pe}{(pe,m)}})^{\frac{m'}{[pe,m]}}\big) = \varphi_p\big(\tilde{c}_{p\cdot(e,m)}(q^{\frac{pe}{p\cdot(e,m)}})\big)^{\frac{m'}{[e,m]}} \equiv \tilde{c}_{(e,m)}(q^{\frac{e}{(e,m)}})^{\frac{m'}{[e,m]}} = \tilde{c}_{(e,m)}(q^{\frac{e}{(e,m)}})^{\frac{m'}{[e,m]}}$$
    where the congruence holds modulo $p^{v_p(m'/e)}$, as a consequence of the assumption on $(\tilde{c}_e(q))_{e|m}$.  
    More precisely, either $m'/m$ is different from $p$, in which case $v_p(m'/e)=v_p(m/(e,m))$ and this is indeed a consequence of the assumption, or $m'/m=p$, in which case we use the fact that $a \equiv b$ modulo $p^{v_p(m/e)}$ implies $a^p \equiv b^p$ modulo $p^{v_p(m'/e)}$. 
\end{proof}

\begin{proposition}[Cyclotomic norms]\label{prop:q-angeltveit}
    Let $R$ be a commutative ring. Then for any integers $m,m' \ge 1$ satisfying $m | m'$, there exists a natural morphism of multiplicative monoids
    $$\Pi_{m'/m} : \qWm(R) \longrightarrow \qWmm(R)$$
    given by $(c_e(q))_{e|m} \mapsto (c_e'(q))_{e|m'} := \big(c_{(e,m)}(q^{\frac{e}{(e,m)}})^{\frac{m'}{[e,m]}}\big)_{e|m'}$ in $q$-ghost coordinates. This morphism is uniquely determined by the fact that it is functorial in $R$ and given by the previous formula on $q$-ghost coordinates.
\end{proposition}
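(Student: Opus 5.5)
The plan is the standard ``universal case'' argument used for Angeltveit's norms on classical Witt vectors: construct $\Pi_{m'/m}$ first on a torsionfree ring with compatible Frobenius lifts, where the $q$-ghost map is injective, and then descend to an arbitrary $R$ by functoriality and surjectivity.

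\textbf{Universal case.} Let $A$ be a polynomial ring $\Z[x_i \mid i \in I]$ on a set of variables. It is flat over $\Z$ and carries the Frobenius lifts $\varphi_p(x_i) = x_i^p$ for every prime $p$. By \cite[Lemma~2.23]{wagner_q-witt_2024}, the $q$-ghost maps $q\text{-}\gh : \qWm(A) \rightarrow \prod_{e|m} A[q]/\Phi_e(q)$ are injective for every $m$. Lemma~\ref{lemmanormpreserveimageofqghostmap} says that the multiplicative map $\Pi_{m'/m}$ defined on ghost coordinates sends the image of $q\text{-}\gh$ for $m$ into the image of $q\text{-}\gh$ for $m'$. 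Hence there is a unique map of multiplicative monoids $\Pi_{m'/m} : \qWm(A) \rightarrow \qWmm(A)$ compatible with the ghost formula. Multiplicativity comes from injectivity together with the fact that the ghost formula is multiplicative componentwise.

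Naturality in ring maps $A \rightarrow A'$ between such polynomial rings also follows from injectivity. The $q$-ghost maps are natural in the ring, since they come from the universal property of Definition~\ref{definitionqghostmaps} and Construction~\ref{constructioncyclotomicqFVsystem} is functorial. The ghost formula is natural as well. Note that naturality holds even for ring maps that do not commute with the chosen Frobenius lifts, because we only use injectivity on the target.

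\textbf{Descent to arbitrary $R$.} For a general ring $R$, take the canonical surjection $A := \Z[x_r \mid r \in R] \twoheadrightarrow R$. The induced map $\qWm(A) \rightarrow \qWm(R)$ is surjective. Indeed, $\W_m(A) \rightarrow \W_m(R)$ is surjective, since Witt vectors are products of copies of the ring on underlying sets, and $\qWm$ is a quotient of $\W_m(-)[q]$. Define $\Pi_{m'/m}$ on $R$ by lifting an element to $\qWm(A)$, applying the norm there, and projecting. To see this is well-defined, suppose $a, b \in \qWm(A)$ have the same image in $\qWm(R)$. Form $B := A \otimes_{\Z} A$, a polynomial ring, together with the two inclusions $i_1, i_2$. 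Let $J$ be the kernel of $B \twoheadrightarrow R$ given by multiplication.

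The key claim is that $i_1(a)$ and $i_2(b)$ agree after passing to $\qWm(R)$, and that this forces $\Pi(i_1 a)$ and $\Pi(i_2 b)$ to agree in $\qWmm(R)$. This is where I expect the main obstacle: the norm is not additive, so a kernel argument is unavailable. I would first try the following formulation. Multiplicative monoid maps between quotients are determined on representatives once we know the norm respects the equivalence relation ``same image in $\qWm(R)$''. This relation is generated by differences lying in the image of $\qWm(J\text{-part})$. So I would check the compatibility of the norm with adding such kernel elements, using the polynomial-ring case with an extra variable, as Angeltveit does. Concretely, pick a polynomial ring $C$ with a map $C \rightarrow R$ and two elements $a', b' \in \qWm(C)$ that map to each other under two ring maps $C \rightrightarrows C'$ which coequalise over $R$. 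Naturality on polynomial rings then gives the identity.

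An alternative, and cleaner, route I would attempt first is to write the norm as a polynomial formula. Express $\Pi_{m'/m}$ on $\W_m(A)[q] \rightarrow \qWmm(A)$ by universal polynomials in Witt coordinates and $q$. These are integral because the universal case is integral. Then show the formula kills the generators of $\mathbb{I}_m$ in the appropriate multiplicative sense, which can again be verified in a universal torsionfree ring through ghost coordinates.

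\textbf{Uniqueness.} Any functorial family given by the ghost formula agrees with ours on polynomial rings, by injectivity of $q\text{-}\gh$ on flat rings. It then agrees on every $R$ by naturality along the surjection $A \twoheadrightarrow R$ and surjectivity of $\qWm(A) \rightarrow \qWm(R)$.
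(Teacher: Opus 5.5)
Your architecture is the paper's. You treat the universal case on polynomial rings (Frobenius lifts $x_i\mapsto x_i^p$, injectivity of the $q$-ghost map by \cite[Lemma~2.23]{wagner_q-witt_2024}, and Lemma~\ref{lemmanormpreserveimageofqghostmap}), then descend along a surjection $f\colon A\twoheadrightarrow R$ from a polynomial ring, and get uniqueness from injectivity plus surjectivity of $\qWm(A)\to\qWm(R)$. Your remark that naturality between polynomial rings does not need compatibility with the chosen Frobenius lifts is correct, and it is exactly what the descent uses.

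However, the step you yourself flag, independence of the lift, is not proved. Passing to $B=A\otimes_{\Z}A$ is circular: $i_1(a)$ and $i_2(b)$ are again two different elements of $\qWm(B)$ with the same image in $\qWm(R)$, so you face the identical problem in a bigger ring. The ``coequalising'' sentence points in the right direction. What is needed is a \emph{single} element $\gamma\in\qWm(C)$, with $C$ a polynomial ring, together with two ring maps $g_1,g_2\colon C\to A$ such that $f\circ g_1=f\circ g_2$, $g_1(\gamma)=a$ and $g_2(\gamma)=b$. You never show such data exist. The difficulty is that representatives of $a,b$ in $\W_m(A)[q]$ need only agree in $\W_m(R)[q]$ modulo $\mathbb{I}_m(R)$, not on the nose.

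\textbf{How to close the gap.} The missing input is that $\mathbb{I}_m(A)\to\mathbb{I}_m(R)$ is surjective. Its generators are obtained from $\W_e(-)$ by the natural operators $V$, $F$ and multiplication by polynomials in $q$, and each $\W_e(A)\to\W_e(R)$ is surjective.

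So, given representatives $\alpha,\beta\in\W_m(A)[q]$ of $a,b$, you can replace $\beta$ by $\beta+\iota$ with $\iota\in\mathbb{I}_m(A)$ so that $f(\alpha)=f(\beta)$ holds in $\W_m(R)[q]$. Now take the following data:
\begin{itemize}
\item $C$ is the polynomial ring on variables $x_{j,e}$, one for each Witt coordinate $e\mid m$ of each $q^j$-coefficient;
\item $\gamma$ is the class of the universal element $\sum_j (x_{j,e})_{e|m}\,q^j$;
\item $g_1,g_2$ send $x_{j,e}$ to the corresponding Witt coordinates of $\alpha$ and $\beta$.
\end{itemize}
Then $f\circ g_1=f\circ g_2$, and naturality on polynomial rings gives $f(\Pi_{m'/m}(a))=(f\circ g_1)(\Pi_{m'/m}(\gamma))=(f\circ g_2)(\Pi_{m'/m}(\gamma))=f(\Pi_{m'/m}(b))$. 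This plays the role of the paper's single common lift of $\tilde{c}_1$ and $\tilde{c}_2$, obtained from \cite[Corollary~2.29]{wagner_q-witt_2024}.

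Your ``polynomial formula'' route needs the same universal-element argument. Since the norm is not additive, ``killing the generators'' must mean $N(\alpha+r\,g)=N(\alpha)$ for generic $\alpha$, generic $r$ and a generator $g$ of $\mathbb{I}_m$. This holds in the universal polynomial ring because both arguments have the same class in $\qWm$, and it then specialises; induction on the number of terms handles all of $\mathbb{I}_m$.

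Once well-definedness is settled, multiplicativity, the ghost formula and naturality in $R$ follow directly. For naturality, use the functoriality of your canonical presentation $\Z[x_r\mid r\in R]$.
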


\begin{proof}
    First assume that $R=\Z[T_i,i \in I]$ is a polynomial $\Z$-algebra (potentially in infinitely many variables). For every prime number $p$, let $\varphi_p : R \rightarrow R$ be the lift of Frobenius given by $T_i \mapsto T_i^p$. The $\Z$-algebra $R$ is flat, so the $q$-ghost maps of Definition~\ref{definitionqghostmaps} are injective (\cite[Lemma~2.23]{wagner_q-witt_2024}). The formula in $q$-ghost coordinates is moreover multiplicative by construction, so it suffices to prove that an element $(c_e(q))_{e|m}$ in the image of the $q$-ghost map is sent to an element $(c_e'(q))_{e|m'}$ in the image of the $q$-ghost map. Using the lifts of Frobenius $\varphi_p$, this is a consequence of Lemma~\ref{lemmanormpreserveimageofqghostmap}. 

    Assume now that $R$ is a general commutative ring. Let $P \twoheadrightarrow R$ be a surjective morphism of commutative rings, where $P$ is a polynomial $\Z$-algebra. There is a natural diagram of commutative $\Z[q]$-algebras
    $$\begin{tikzcd}[column sep = large]
        \qWm(P) \ar[r,"\Pi_{m'/m}"] \ar[d,two heads] & \qWmm(P) \ar[d,two heads] \\
        \qWm(R) \ar[r,"\Pi_{m'/m}",dotted] & \qWmm(R)
    \end{tikzcd}$$
    where the vertical maps are surjective (\cite[Corollary~2.29]{wagner_q-witt_2024}). In particular, given an element $c \in \qWm(R)$, any lift $\tilde{c} \in \qWm(P)$ induces an element $\Pi_{m'/m}(c) \in \qWmm(R)$, which is compatible with the desired formula on $q$-ghost coordinates by the previous paragraph and the functoriality of $q$-ghost maps. Note that this construction a priori depends on the surjection $P \twoheadrightarrow R$ and on the lift $\tilde{c} \in \qWm(R)$. To prove that it is independent of these choices, let $P_1 \twoheadrightarrow R$, $\tilde{c}_1 \in \qWm(P_1)$ and $P_2 \twoheadrightarrow R$, $\tilde{c}_2 \in \qWm(P_2)$ be two such choices. Let $P := P_1 \times P_2$. By \cite[Corollary~2.29]{wagner_q-witt_2024}, there is a natural isomorphism of commutative $\Z[q]$-algebras $\qWm(P) \cong \qWm(P_1) \times \qWm(P_2)$. In particular, there exists an element $\tilde{c} \in \qWm(R)$ mapping to $\tilde{c}_1 \in \qWm(P_1)$ and $\tilde{c}_2 \in \qWm(P_2)$ via the natural projection maps, and the element $\Pi_{m'/m}(c) \in \qWmm(R)$ is well-defined.
\end{proof}

\begin{remark}\label{remarkcompatibilityupgradedcyclotomicpowers}
    Let $R$ be a commutative ring. For any integers $m,m',m'' \ge 1$ satisfying $m|m'|m''$, the cyclotomic norms of Proposition~\ref{prop:q-angeltveit} satisfy the identity $\Pi_{m''/m'} \circ \Pi_{m'/m} \circ \Pi_{m''/m}$. This is indeed a consequence of the first paragraph of the proof of Lemma~\ref{lemmanormpreserveimageofqghostmap}. In particular, for any integers $m,m' \ge 1$ satisfying $m|m'$, we have the equality $\Pi_{m'/m} \circ \Pi_m = \Pi_{m'}$, where the notation here is compatible with that of the Teichmüller lift (Notation~\ref{notationqteichmullerlift}).
\end{remark}

\subsection{Cyclotomic rings}\label{subsectioncyclotomicrings}

In this subsection, we introduce the notion of cyclotomic ring $\mathcal{C}_R$ of a commutative ring $R$ (Definition~\ref{definitioncyclotomicring}).

\begin{definition}[Cyclotomic ring]\label{definitioncyclotomicring}
    Let $R$ be a commutative ring. The \emph{cyclotomic ring of $R$} is the commutative $\Z[q]$-algebra
    $$\CR := \lim_{m \ge 1} \qWm(R)$$
    where the limit is taken along the Frobenius maps of Construction~\ref{constructionqfrobeniiandverschiebungen}.
\end{definition}

\begin{examples}\label{examplescyclotomicring}
    Following Examples~\ref{examplesqWittvectors}, the cyclotomic ring of $R$ can be made explicit in the following cases.
    \begin{enumerate}
        \item ($R=\Z$) $\mathcal{C}_{\Z} \cong \lim_{m \ge 1} \Z[q]/(q^m-1)$ where the limit is taken along the natural projection maps, and similarly for any perfect $\Lambda$-ring $R$.
        \item ($R=\nbrrg$) $\CR \cong \lim_{m \ge 1} \mathcal{H}_{R,m}/(q^m-1)$ for every number field $K$, where $R=\nbrrg$ and the limit is taken along the natural projection maps (\cite[Remark~2.9]{wagner_q-hodge_2025}).
        \item ($R=\Z[T]$) $\mathcal{C}_{\Z[T]} \cong \mathcal{C}_{\Z}$ (Examples~\ref{examplesqWittvectors}\,(3)). 
    \end{enumerate}
\end{examples}

\begin{remark}
    For general commutative rings $R$, it may happen that the derived inverse limit $R\!\lim_{m \ge 1} \qWm(R)$, where the limit is taken along the Frobenius maps of Construction~\ref{constructionqfrobeniiandverschiebungen}, is not concentrated in degree zero. However, if $R$ is étale over $\Z$, or over a perfect $\Lambda$-ring, then one can use the $q$-Dwork lemma (Lemma~\ref{lemmaqDwork}) to prove that the Frobenius maps of Construction~\ref{constructionqfrobeniiandverschiebungen} are surjective, hence the derived and classical limits coincide.
\end{remark}

\begin{lemma}\label{lemmapq-1completion}
    For every prime number $p$ and every integer $r \ge 0$, the element $q^{p^r}-1$ of $\Z[q]$ belongs to the ideal $(p,q-1)^r \subseteq \Z[q]$. In particular, for every $p$-torsionfree commutative ring $R$, the $(p,q-1)$-completion of the commutative $\Z[q]$-algebra $\lim_{r \ge 0} R[q]/(q^{p^r}-1)$ is naturally identified with $R^\wedge_p[\![q-1]\!]$.
\end{lemma}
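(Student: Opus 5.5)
The plan is to prove the first claim by induction on $r$, and then deduce the completion statement by a cofinality argument between two inverse systems of ideals.

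\textbf{First claim.} For $r=0$ the statement is trivial, since $(p,q-1)^0=\Z[q]$. For the inductive step, write $x:=q^{p^{r}}$, so that
$$q^{p^{r+1}}-1=(x-1)(1+x+\cdots+x^{p-1}).$$
The second factor is congruent to $p$ modulo $x-1$, hence modulo $q-1$. It therefore lies in $(p,q-1)$. By induction, $x-1\in(p,q-1)^r$, so the product lies in $(p,q-1)^{r+1}$.

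\textbf{Completion statement.} Set $A:=\lim_{r} R[q]/(q^{p^r}-1)$. I will compare the $(p,q-1)$-adic topology on $A$ with the one on $R[q]$. The first claim gives, for every $n$,
$$(p,q-1)^{n}+(q^{p^n}-1)=(p,q-1)^n,$$
so the quotient maps $R[q]/(q^{p^n}-1)\to R[q]/(p,q-1)^n$ are defined and compatible in $n$. This yields a natural map $A\to\lim_n R[q]/(p,q-1)^n=R^\wedge_p[\![q-1]\!]$, where the last identification is standard.

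To show that this map identifies the $(p,q-1)$-completion of $A$ with the target, I will use the following observation. The ideal $(p,q-1)^n A$ contains $q^{p^n}-1$, so
$$A/(p,q-1)^nA=\big(A/(q^{p^n}-1)\big)/(p,q-1)^n.$$
It then remains to show that $A/(q^{p^n}-1)A\cong R[q]/(q^{p^n}-1)$, that is, that the projection $A\to R[q]/(q^{p^n}-1)$ is surjective with kernel $(q^{p^n}-1)A$. Granting this, we get
$$A/(p,q-1)^n\cong R[q]/(p,q-1)^n,$$
and passing to the limit over $n$ gives $A^\wedge_{(p,q-1)}\cong R^\wedge_p[\![q-1]\!]$.

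\textbf{The kernel computation.} Surjectivity is clear, because the transition maps in the system defining $A$ are surjective. For the kernel, let $a=(a_s)_s\in A$ with $a_n=0$. For each $s\ge n$, the element $a_s$ lies in
$$(q^{p^n}-1)R[q]/(q^{p^s}-1),$$
so I write $a_s=(q^{p^n}-1)b_s$. I then need to choose the $b_s$ compatibly.

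The element $b_s$ is well defined only modulo the annihilator of $q^{p^n}-1$ in $R[q]/(q^{p^s}-1)$. Since $q^{p^n}-1$ is a non-zerodivisor in $R[q]$, this annihilator is generated by
$$\frac{q^{p^s}-1}{q^{p^n}-1}.$$
So the choices of $b_s$ form a torsor under a module whose transition maps, being images of generators under surjections, are surjective. Such a system of torsors has nonempty limit, which provides a compatible family $(b_s)_s$ and hence $a\in(q^{p^n}-1)A$.

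This step is where the real care is needed, namely the vanishing of $\lim^1$. The $p$-torsionfreeness of $R$ is used only mildly, to identify $\lim_n R[q]/(p,q-1)^n$ with $R^\wedge_p[\![q-1]\!]$. Alternatively, if the intended meaning of "completion" is derived completion, I would argue with derived quotients instead. I would present the classical argument above.
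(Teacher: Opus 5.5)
Your argument for the first claim is correct. It is the paper's argument written inductively: the paper uses $q^{p^r}-1=(q-1)\prod_{i<r}[p]_{q^{p^i}}$ with each $[p]_{q^{p^i}}\equiv p$ modulo $q-1$, and then simply asserts that the completion statement follows. The gap is in your kernel computation. The intermediate claim $A/(q^{p^n}-1)A\cong R[q]/(q^{p^n}-1)$ is false, and the step that fails is the surjectivity of the transition maps between annihilators. Put $f:=q^{p^n}-1$ and $g_s:=(q^{p^s}-1)/f$. The generator $g_{s+1}=[p]_{q^{p^s}}g_s$ maps at level $s$ to $[p]_{q^{p^s}}g_s\equiv p\,g_s$ modulo $q^{p^s}-1$, not to $g_s$. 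So, under $g_sR[q]/(q^{p^s}-1)\cong R[q]/(f)$, your tower of annihilators is $\cdots\xrightarrow{p}R[q]/(f)\xrightarrow{p}R[q]/(f)$. This tower is not Mittag-Leffler. Moreover $\ker(A\to R[q]/(f))/fA$ is exactly its $\lim^1$, which for $R=\Z$ is $(\Z_p/\Z)^{p^n}\neq 0$. Concretely, take $R=\Z$, $n=0$, and $\lambda\in\Z_p\setminus\Z$ with truncations $0\le\lambda_s<p^s$. Then $a:=(q^{\lambda_s}-1)_s\in A$ maps to $0$ in $\Z[q]/(q-1)$. But $a=(q-1)b$ would force $b_s-[\lambda_s]_q\in[p^s]_q\Z[q]/(q^{p^s}-1)$. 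The image of $b$ under $q\mapsto 1$ is an integer independent of $s$, and it would then be congruent to $\lambda_s$ modulo $p^s$ for every $s$. This forces $\lambda\in\Z$, a contradiction.

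The conclusion $A/(p,q-1)^nA\cong R[q]/(p,q-1)^n$ is nevertheless true, and you only need the weaker inclusion $\ker(A\to R[q]/(f))\subseteq(p,q-1)^nA$. There are two ways to get it.

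\emph{First fix.} The $\lim^1$ of a tower whose transition maps are multiplication by $p$ is $p$-divisible: the class of $(y_s)_s$ equals $p$ times the class of $(y_{s+1})_s$. The connecting map from $\ker(A\to R[q]/(f))$ onto this $\lim^1$ is surjective with kernel $fA$. Hence every element of the kernel lies in $fA+p^nA\subseteq(p,q-1)^nA$.

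\emph{Second fix.} This one actually uses $p$-torsionfreeness. It gives $A/p^nA\cong\lim_s(R/p^n)[q]/(q^{p^s}-1)$. There the annihilator tower is Mittag-Leffler because $p^n=0$, so your torsor argument does go through modulo $p^n$ and yields $A/(p^n,f)A\cong R[q]/(p^n,f)$. The ideals $(p^n,q^{p^n}-1)$ and $(p,q-1)^n$ are cofinal, by the first claim together with $(q-1)^{p^n}\equiv q^{p^n}-1$ modulo $p$. Combining these gives $A^\wedge_{(p,q-1)}\cong R^\wedge_p[\![q-1]\!]$.
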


\begin{proof}
    The second claim is a direct consequence of the first claim. To prove the first claim, it suffices to prove that $[p^r]_q \in (p,q-1)^r$, because $q^{p^r}-1=(q-1) \cdot [p^r]_q$. This statement for $[p^r]_q$ is a consequence of the formula $[p^r]_q = \prod_{i=0}^{r-1} [p]_{q^{p^i}}$ and of the congruence $[p]_{q^{p^i}} \equiv p$ modulo $(q-1)$ for every integer $i \ge 0$.
\end{proof}

\begin{remark}[$p$-adic realisation]\label{remarkpadicrealisationcyclotomic}
    Let $R$ be a flat $\Z$-algebra, and $p$ be a prime number. If $R^\wedge_p$ admits a lift of Frobenius $\varphi_p$ (\textit{i.e.}, a ring homomorphism $\varphi_p : R^\wedge_p \rightarrow R^\wedge_p$ whose reduction modulo~$p$ is the Frobenius endomorphism of $R/p$), then there is a natural morphism of commutative $\Z[q]$\nobreakdash-algebras
    $$\mathcal{C}_R \longrightarrow R^\wedge_p[\![q-1]\!].$$
    Using Lemma~\ref{lemmaqDwork} and Remark~\ref{remarknaiveqDwork}, this morphism is induced by the composite morphism of commutative $\Z[q]$-algebras
    $$\prod_{e \ge 1} R[q]/\Phi_e(q) \longrightarrow \prod_{r \ge 0} R[q]/\Phi_{p^r}(q) \longrightarrow \prod_{r \ge 0} R[q]/\Phi_{p^r}(q)$$
    where the first map is the natural projection $(c_e(q))_{e \ge 1} \mapsto (c_{p^r}(q))_{r \ge 0}$ and the second map is given by $(c_{p^r}(q))_{r \ge 0} \mapsto (\varphi_p^r(c_{p^r}(q)))$.\footnote{Following Lemma~\ref{lemmaqDwork}, we denote by $\varphi_p : R[q]/\Phi_{p^r}(q) \rightarrow R[q]/\Phi_{p^r}(q)$ the unique $\Z[q]$-linear ring homomorphism extending $\varphi_p : R^\wedge_p \rightarrow R^\wedge_p$.} More precisely, the image of the cyclotomic ring $\mathcal{C}_{R}$ via this composite morphism is given by $\lim_{r \ge 0} R[q]/(q^{p^r}-1)$, and the desired morphism is obtained by post-composing with the $(p,q-1)$-completion $\lim_{r \ge 0} R[q]/(q^{p^r}-1) \rightarrow R^\wedge_p[\![q-1]\!]$ (Lemma~\ref{lemmapq-1completion}). 
    Note that this map $\mathcal{C}_R \rightarrow R^\wedge_p[\![q-1]\!]$ depends on the choice of Frobenius lift $\varphi_p$ (\textit{e.g.}, if $R$ is smooth over $\Z$). Also note that if $R$ is étale over $\Z$, then such a Frobenius lift $\varphi_p$ on $R^\wedge_p$ is unique.
\end{remark}

We now introduce the notion of cyclotomic Frobenius on these cyclotomic rings. To do so, we will use the following variant of Definition~\ref{definitionqbigWittvectors}.

\begin{definition}[Big $q$-Witt vectors at $d$]\label{definitionbigqWittvectorsatd}
    Let $R$ be a commutative ring, and $d \ge 1$ be an integer. For every integer $m \ge 1$, the \emph{$m$-truncated big $q$-Witt vectors at $d$ of $R$} is the commutative $\Z[q]$-algebra
    $$\qWm^{(d)}(R) := \Wm(R)[q]/\mathbb{I}_m^{(d)}$$
    where $\mathbb{I}^{(d)}_m \subseteq \W_m(R)[q]$ is the ideal generated by
    \begin{enumerate}[label=(\roman*)]
        \item $\big(\prod_{d|e'|e} \Phi_{e'}(q)\big)\text{im}(V_{m/e})$, for every integer $e \ge 1$ satisfying $e|m$,\footnote{Note here that $\prod_{d|e'|e} \Phi_{e'}(q) = 1$ if $d \nmid e$.} and
        \item $\text{im}([f/e]_{q^e} V_{m/f} - V_{m/e} \circ F_{f/e})$, for all integers $e,f \ge 1$ satisfying $d|e|f|m$.
    \end{enumerate}
\end{definition}

\begin{remark}
    For every commutative ring $R$ and integer $m \ge 1$, we have $\qWm^{(1)}(R) = \qWm(R)$. Moreover, for every integer $d \ge 1$, one can check that $\mathbb{I}_m \subseteq \mathbb{I}_m^{(d)}$.
\end{remark}

\begin{remark}\label{remarkuniversalpropertyqWittvectorsatd}
    Let $R$ be a commutative ring, and $d \ge 1$ be an integer. Arguing as in \cite[Lemma~2.9]{wagner_q-witt_2024}, the system $(\qWm^{(d)}(R))_{m \ge 1}$ is the initial \emph{$q$-FV-system at $d$ over $R$}, \emph{i.e.}, the initial system of commutative $\Z[q]$-algebras $(W_m^{(d)})_{m \ge 1}$ indexed by integers $m \ge 1$, together with the following structure:
    \begin{enumerate}[label=(\roman*)]
        \item for every integer $m \ge 1$, a morphism of $\Z[q]$-algebras $\W_m(R)[q]/\prod_{d|e|m}\Phi_e(q) \rightarrow W_m^{(d)}$ (in particular $W^{(d)}_m=0$ unless $d|m$);
        \item for any integers $m,m' \ge 1$ satisfying $d|m|m'$, morphisms $F_{m'/m} :W_{m'}^{(d)} \rightarrow W_m^{(d)}$ of $\Z[q]$\nobreakdash-algebras and $V_{m'/m} : W_m^{(d)} \rightarrow W_{m'}^{(d)}$ of $\Z[q]$-modules, compatible with the Frobenius and Verschiebung maps on big $q$-Witt vectors (Construction~\ref{constructionqfrobeniiandverschiebungen}), and such that $F_{m'/m} \circ V_{m'/m} = m'/m$ and $V_{m'/m} \circ F_{m'/m} = [m'/m]_{q^m}$.
    \end{enumerate}
\end{remark}

\begin{definition}[Canonical maps]\label{definitioncanonicalmapsqWittvectors}
    Let $R$ be a commutative ring, and $d \ge 1$ be an integer. For every integer $m \ge 1$, the \emph{canonical map}
    $$\can : \qWm(R) \longrightarrow \qWm^{(d)}(R)$$
    is the morphism of commutative $\Z[q]$-algebras induced by the inclusion of ideals $\mathbb{I}_m \subseteq \mathbb{I}_m^{(d)}$ of $\W_m(R)[q]$ (Definitions~\ref{definitionqbigWittvectors} and~\ref{definitionbigqWittvectorsatd}).
\end{definition}

\begin{remark}
    Let $R$ be a commutative ring, and $d \ge 1$ be an integer. Any $q$-FV-system at $d$ over~$R$ is in particular a $q$-FV-system over $R$, and the canonical maps of Definition~\ref{definitioncanonicalmapsqWittvectors} are induced by the universal property of $(\qWm(R))_{m \ge 1}$ (Remarks~\ref{remarkuniversalpropertyqWittvectors}).
\end{remark}

\begin{remark}[$q$-ghost maps at $d$]
    Let $R$ be a commutative ring, and $d \ge 1$ be an integer. Following Construction~\ref{constructioncyclotomicqFVsystem} and Definition~\ref{definitionqghostmaps}, there is a natural $q$-ghost map 
    $$q\text{-}\gh : \qWm^{(d)}(R) \longrightarrow \prod_{d|e|m} R[q]/\Phi_e(q), \quad c \longmapsto (c_e(q))_{d|e|m}$$
    on big $q$-Witt vectors at $d$ for every integer $m \ge 1$, which is compatible with the canonical map of Definition~\ref{definitioncanonicalmapsqWittvectors}.
\end{remark}

\begin{proposition}[Cyclotomic Frobenii]\label{propositioncyclotomicFrobeniiqWittvectors}
    Let $R$ be a commutative ring, and $d \ge 1$ be an integer. Then for every integer $m \ge 1$, there exists a natural morphism of commutative rings
    $$\Frob_d^{\Cyc} : \qWm(R) \longrightarrow q\text{-}\W_{dm}^{(d)}(R)$$
    given by $(c_e(q))_{e|m} \mapsto (c'_e(q))_{d|e|dm} := (c_{e/d}(q^d))_{d|e|dm}$ in $q$-ghost coordinates. This morphism is uniquely determined by the fact that it is functorial in $R$ and by the previous formula on $q$-ghost coordinates.
\end{proposition}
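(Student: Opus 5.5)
The construction follows the proof of Proposition~\ref{prop:q-angeltveit}. We first treat polynomial rings using injectivity of $q$-ghost maps, and then pass to general $R$ via surjections from polynomial rings. Since the formula $(c_e(q))_{e|m} \mapsto (c_{e/d}(q^d))_{d|e|dm}$ is a ring homomorphism on the level of products, the map is automatically additive and multiplicative once it is shown to land in the image of the $q$-ghost map.

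\textbf{Well-definedness on products.} Because $\Phi_e(q)$ divides $\Phi_{e/d}(q^d)$ for $d|e$, the substitution $q \mapsto q^d$ induces a ring map $R[q]/\Phi_{e/d}(q) \to R[q]/\Phi_e(q)$. Hence the formula defines a ring homomorphism
$$\prod_{e'|m} R[q]/\Phi_{e'}(q) \longrightarrow \prod_{d|e|dm} R[q]/\Phi_e(q).$$

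\textbf{Image of the $q$-ghost map.} We need an analogue of the $q$-Dwork lemma (Lemma~\ref{lemmaqDwork}) for $q\text{-}\W^{(d)}_{dm}(R)$. We would prove it exactly as Lemma~\ref{lemmaqDwork}: the quotient $\W_{dm}(R)[q] \twoheadrightarrow q\text{-}\W^{(d)}_{dm}(R)$ is compatible with the ghost map followed by projection onto the factors indexed by $e$ with $d|e|dm$, after reindexing $e \mapsto dm/e$. We would then invoke Dwork's lemma (Lemma~\ref{lemmadwork}), with the ghost components indexed by $e'$ with $e'|m$ left free, since they are allowed to be arbitrary lifts.

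The resulting criterion reads as follows. An element $(c'_e)_{d|e|dm}$ lies in the image if it admits lifts $\tilde c'_e \in R[q]$ satisfying
$$\tilde c'_e \equiv \varphi_p(\tilde c'_{pe}) \quad \text{in } (R/p^{v_p(dm/e)})[q]$$
whenever $d|e$ and $pe|dm$. Given lifts $\tilde c_{e'}$ for $c$ satisfying the condition of Lemma~\ref{lemmaqDwork}, we set $\tilde c'_e := \tilde c_{e/d}(q^d)$. The required congruences then follow directly, because $\varphi_p$ is $\Z[q]$-linear and therefore commutes with $q\mapsto q^d$, and because $v_p(dm/e) = v_p(m/(e/d))$.

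This Dwork-type characterization of the image for the ``at $d$'' $q$-Witt vectors is the main obstacle. The ideal $\mathbb{I}^{(d)}_{dm}$ involves the extra relations (i) with the products $\prod_{d|e'|e}\Phi_{e'}(q)$, and one must check that these correspond exactly to discarding the ghost components not divisible by $d$. If a clean criterion is hard to establish, we would instead factor through Lemma~\ref{lemmaqDwork}. Concretely, we would show that the element $(\tilde c_{e/d}(q^d)$ for $d|e$, and suitably chosen entries for $d\nmid e)$ lies in the image of $q\text{-}\W_{dm}(R)$, for instance using a norm-like lift via Lemma~\ref{lemmanormpreserveimageofqghostmap}. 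We would then compose with $\can$.

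\textbf{Polynomial rings and general $R$.} For $R$ polynomial, with Frobenius lifts $T_i\mapsto T_i^p$, we need the $q$-ghost map of $q\text{-}\W^{(d)}_{dm}(R)$ to be injective. We would prove this as in \cite[Lemma~2.23]{wagner_q-witt_2024}, or deduce it from the explicit presentation. Uniqueness and naturality in $R$ then follow. For general $R$, we choose a surjection $P\twoheadrightarrow R$ and define the map by lifting, using surjectivity of $\qWm(P)\to\qWm(R)$ \cite[Corollary~2.29]{wagner_q-witt_2024}. Independence of choices follows via $P_1\times P_2$, verbatim as in the proof of Proposition~\ref{prop:q-angeltveit}. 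For this last step we also need the kernel of $q\text{-}\W^{(d)}_{dm}(P)\to q\text{-}\W^{(d)}_{dm}(R)$ to be compatible, which follows since $q\text{-}\W^{(d)}_{dm}(R)$ is a quotient of $\W_{dm}(R)[q]$ and $\W_{dm}$ preserves surjections.
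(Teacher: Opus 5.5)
Your proposal is correct and follows the paper's proof. The polynomial case goes through the $q$-Dwork lemma at $d$ with the lifts $\tilde c_{e/d}(q^d)$ and the identity $v_p(dm/e)=v_p(m/(e/d))$, and the general case is reduced to polynomial rings exactly as in Proposition~\ref{prop:q-angeltveit}.

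The step you flag as the main obstacle is easier than you expect, and your indexing there is inverted. The $q$-ghost coordinates at $e$ with $d\mid e$ are the classical ghost components $\gh_{dm/e}$ with $dm/e\mid m$. These are the components you keep, not the ones left free, and they are precisely the ghost components of the surjective restriction $\W_{dm}(R)\to\W_m(R)$. So Dwork's lemma for $\W_m(R)$ gives the image criterion directly. The extra relations in $\mathbb{I}^{(d)}_{dm}$ matter only for the injectivity of the $q$-ghost map at $d$, which the paper also uses implicitly.
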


\begin{proof}
    First note that the morphism of commutative rings $\prod_{e|m} R[q]/\Phi_e(q) \rightarrow \prod_{d|e|dm} R[q]/\Phi_e(q)$ given by $(c_e(q))_{e|m} \mapsto (c_{e/d}(q^d))_{d|e|dm}$ is well-defined, because $\Phi_e(q)$ divides $\Phi_{e/d}(q^d)$ in $\Z[q]$.
    
    If $R$ is a polynomial $\Z$-algebra, then $R$ admits lifts of Frobenius $\varphi_p : R \rightarrow R$ for all prime numbers~$p$. In this case, we use the $q$-Dwork lemma (Lemma~\ref{lemmaqDwork})\footnote{Note that the statement and the proof of the $q$-Dwork lemma also hold for big $q$-Witt vectors at $d$.} to prove the desired result. To do so, let $(c_e(q))_{e|m} \in \prod_{e|m} R[q]/\Phi_e(q)$ be an element in the image of the $q$-ghost map, and $(\tilde{c}_e(q))_{e|m} \in \prod_{e|m} R[q]$ be a lift of this element such that, for every prime number $p$ and every integer $e \ge 1$ satisfying $pe|m$, we have $\tilde{c}_e(q) \equiv \varphi_p(\tilde{c}_{pe}(q))$ in $(R/p^{v_p(m/e)})[q]$. 
    
    It suffices to prove that, for every prime number $p$ and every integer $e \ge 1$ satisfying $d|e$ and $pe|dm$, we have $\tilde{c}_{e/d}(q^d) \equiv \varphi_p(\tilde{c}_{pe/d}(q^d))$ in $(R/p^{v_p(dm/e)})[q]$. This congruence holds as a consequence of the assumption on $(\tilde{c}_e(q))_{e|m}$, where we use that $p^{v_p(m/(e/d))}=p^{v_p(dm/e)}$.  
    
    If $R$ is a general commutative ring, then arguing as in the second part of the proof of Proposition~\ref{prop:q-angeltveit} implies the desired result.
\end{proof}

The following variant of Definition~\ref{definitionbigqWittvectorsatd} will be used in Section~\ref{sec:first-polylog}.

\begin{variant}[Cyclotomic ring at $d$]\label{variantcyclotomicringatd}
    Let $R$ be a commutative ring, and $d \ge 1$ be an integer. For every integer $m \ge 1$, the \emph{$m$-truncated cyclotomic ring of $R$ at $d$} is the commutative $\Z[q]$-algebra
    $$\mathcal{C}_{R,m}^{(d)} := \qWm^{(d)}(R)\big[\Phi_e(q)^{-1}~\big|~d \nmid e>1\big]$$
    where the localisation is indexed by integers $e > 1$ satisfying $d \nmid e$ and $e|m$. In particular, the morphisms of Definition~\ref{definitioncanonicalmapsqWittvectors} and Proposition~\ref{propositioncyclotomicFrobeniiqWittvectors} on big $q$-Witt vectors induce natural morphisms
    $$\can : \CRm \longrightarrow \CRm^{(d)} \quad \quad \Frob_d^{\Cyc} : \CRm \longrightarrow \mathcal{C}_{R,dm}^{(d)}$$
    of commutative rings, which we call the \emph{canonical map} and the \emph{$d^{th}$ cyclotomic Frobenius}. Similarly, the \emph{cyclotomic ring of $R$ at $d$} is the commutative $\Z[q]$-algebra $\CR^{(d)}$ defined as the limit of the truncated cyclotomic rings $\CRm^{(d)}$ over integers $m \ge 1$ satisfying $(m,N)=1$, along the Frobenius maps $F_{m'/m}$ of Remark~\ref{remarkuniversalpropertyqWittvectorsatd}. Note in particular that $\CR^{(d)}=0$ if $(d,N)>1$.
\end{variant}

The map $\Frob_d^{\Cyc}$ in Variant~\ref{variantcyclotomicringatd} is called \emph{cyclotomic Frobenius} (despite the map $F_d$ in Constructions~\ref{constructionfrobeniiandverschiebungen} and~\ref{constructionqfrobeniiandverschiebungen} already being called Frobenius) because of the following remarks, which states that $\Frob_d^{\Cyc}$ behaves as a ``universal $q$-lift of Frobenius'', at least when $R$ can be equipped with a $\Lambda$-ring structure.

\begin{remark}\label{remarkLambdaring}
    Let $R$ be a $\Lambda$-ring with Adams operations $(\psi^n)_{n \ge 1}$, and $d \ge 1$ be an integer. For every integer $m \ge 1$, there are natural commutative diagrams of commutative rings
    $$\begin{tikzcd}
    \qW_m(R) \arrow[r,"c_m"]\arrow[d,"\mathrm{can}"'] & R[q]/(q^m-1) \arrow[d,"\mathrm{can}"] \\ 
    \qW_{m}^{(d)}(R) \arrow[r,"c_m^{(d)}"] & R[q]/\prod_{d|e|m}{\Phi_e(q)}
    \end{tikzcd}
    \qquad 
    \begin{tikzcd}
    \qW_m(R) \arrow[r,"c_m"]\arrow[d,"\Frob^{\Cyc}_d"'] & R[q]/(q^m-1)\arrow[d,"\psi^d\otimes (q\mapsto q^d)"]\\ 
    \qW_{dm}^{(d)}(R) \arrow[r,"c_{dm}^{(d)}"] & R[q]/\prod_{d|e|dm}{\Phi_e(q)}
    \end{tikzcd}$$
    where the top horizontal maps are defined in \cite[Lemma~2.34]{wagner_q-witt_2024}, the bottom horizontal maps are defined as in \cite[Lemma~2.34]{wagner_q-witt_2024}, and the left vertical maps are defined in Definition~\ref{definitioncanonicalmapsqWittvectors} and Proposition~\ref{propositioncyclotomicFrobeniiqWittvectors}. In particular, there are similar commutative diagrams for the cyclotomic rings of Definition~\ref{definitioncyclotomicring} and Variant~\ref{variantcyclotomicringatd}, and these commutative diagrams are compatible with the isomorphism of Examples~\ref{examplesqWittvectors}\,(1) when the $\Lambda$-ring $R$ is perfect.
\end{remark}

\section{The cyclotomic logarithm}\label{sec:cyclotomic-logarithm}

Let $K$ be a number field, with ring of integers $\mathcal{O}_K$ and discriminant $\Delta_K$. In this section, we define the \emph{(derived) cyclotomic logarithm}
$$\dlog_{\Cyc} : \G_m(R)[-1] \longrightarrow \mathcal{N}^{\ge 1} \mathcal{C}_R\{1\}$$
of $R:=\nbrrg$ (Definition~\ref{definitioncyclotomiclogarithm}). Our construction is a global analogue of Bhatt--Lurie's prismatic logarithm \cite[Section~2]{bhatt_absolute_2022}. To bypass the use of quasisyntomic descent used in their construction, we follow Mao's alternative approach to the prismatic logarithm \cite[Section~2]{mao_prismatic_2024}.

\subsection{Frobenius maps on the Habiro ring of a number field}\label{subsectionHabirorings}

In this subsection, we recall the definition of the Habiro rings from the recent work of Garoufalidis--Scholze--Wheeler--Zagier \cite{garoufalidis_habiro_2024}, and construct an analogue of the cyclotomic Frobenius maps of the previous section in this context (Construction~\ref{constructionHabiroFrobenii}).

\begin{definition}[Habiro ring, \cite{garoufalidis_habiro_2024,wagner_q-hodge_2025}]\label{def:habiro}
    Let $R$ be an étale $\Z$-algebra and, for every prime number~$p$, let $\varphi_{R^\wedge_p}$ be the unique lift of Frobenius of the formally étale $\Z_{p}$\nobreakdash-algebra $R^\wedge_{p}$. 
    The \emph{Habiro ring of} $R$ is the commutative ring
    \[
    \mathcal{H}_{R}:=\left\{(f_e(q))_{e \ge 1} \in \prod_{e \ge 1}{R[q]^\wedge_{\Phi_e(q)}}~\bigg |~f_e(q) = \varphi_p(f_{p e}(q))~\text{in}~R[q]^\wedge_{(\Phi_{e}(q),\Phi_{pe}(q))}\right\}
    \]
    where the condition is taken over all prime numbers $p$ and integers $e \ge 1$, and where the endomorphism $\varphi_{p} : R[q]^\wedge_{(\Phi_{e}(q),\Phi_{pe}(q))} \rightarrow R[q]^\wedge_{(\Phi_{e}(q),\Phi_{pe}(q))}$ is defined as the $(p,\Phi_e(q))$-completion\footnote{Here we use the equality of ideals $(\Phi_e(q),\Phi_{pe}(q)) = (p,\Phi_e(q))$ in the commutative ring $\Z[q]$ (\cite[Lemma~2.1]{wagner_q-witt_2024}).} of $\varphi_{R^\wedge_p} \otimes \text{id} : R^\wedge_p \otimes_{\Z} \Z[q] \rightarrow R^\wedge_p \otimes_{\Z} \Z[q]$.
\end{definition}

We will mostly use the following truncated variant of Definition~\ref{def:habiro}, which already appears in \cite[2.7]{wagner_q-hodge_2025}.

\begin{variant}[Truncated Habiro ring]\label{varianttruncatedHabiroring}
Let $R$ be an étale $\Z$-algebra, and $m\geq 1$ be an integer. The \emph{$m$-truncated Habiro ring of $R$} is the commutative $\Z[q]$-algebra 
\[
    \mathcal{H}_{R,m}:=\left\{(f_e(q))_{e|m} \in \prod_{e|m}{R[q]^\wedge_{\Phi_e(q)}}~\bigg |~f_e(q) = \varphi_p(f_{p e}(q))~\text{in}~R[q]^\wedge_{(\Phi_{e}(q),\Phi_{pe}(q))}\right\}
\]
where the condition is taken over all prime numbers $p$ and integers $e \ge 1$ such that $pe|m$. 
\end{variant}

\begin{remark}
    For every étale $\Z$-algebra $R$, we have $\mathcal{H}_R = \lim_{m \ge 1} \mathcal{H}_{R,m}$.
\end{remark}

\begin{variant}[Truncated Habiro ring at $d$]\label{variantHabiroringatd} 
    Let $R$ be an étale $\Z$-algebra, 
    and $d\geq 1$ be an integer. For every integer $m \ge 1$, the \emph{$m$-truncated Habiro ring of $R$ at $d$} is the commutative $\Z[q]$-algebra
    $$\mathcal{H}_{R,m}^{(d)}:=\left\{(c_e)_{d|e|m} \in \prod_{d|e|m}{R[q]^\wedge_{\Phi_e(q)}}~\bigg |~c_e = \varphi_p(c_{p e})~\text{in}~R[q]^\wedge_{(\Phi_{e}(q),\Phi_{pe}(q))}\right\}\left[\Phi_e(q)^{-1}~\big|~d \nmid e>1\right]$$
    where the condition is taken over all prime numbers $p$ and integers $e \ge 1$ such that $d|e$, and the localisation is indexed by integers $e>1$ satisfying $d \nmid e$ and $e|m$.
\end{variant}

\begin{remark}
    Inverting the cyclotomic polynomials $\Phi_e(q)$ for $d \nmid e > 1$ in Variant~\ref{variantHabiroringatd} seems to be necessary to define the cyclosyntomic first Chern class in Section~\ref{subsec:the-first-chern-class}.
\end{remark}

\begin{remark}\label{remarkHabiroqWitt}
    Let $R$ be an \'etale $\Z$-algebra. For every integer $m \ge 1$, it was proved by Wagner that there is a natural isomorphism of commutative $\Z[q]$-algebras $\mathcal{H}_{R,m}/(q^m-1) \cong \qWm(R)$ (\cite[Theorem~$2.9$]{wagner_q-hodge_2025}). Moreover, for every integer $m' \ge 1$ satisfying $m|m'$, this identification induces a natural commutative diagram of $\Z[q]$-algebras 
    \[
    \begin{tikzcd}
    \mathcal{H}_{R,m'} \arrow[r]\arrow[d,"\text{can}"] & q\text{-}\mathbb{W}_{m'}(R) \arrow[d,"F_{m'/m}"] \\
    \mathcal{H}_{R,m} \arrow[r] & \qWm(R)
    \end{tikzcd}
    \]
    where the left vertical map is the natural projection map and the right vertical map is the Frobenius map on big $q$-Witt vectors (\cite[Remark~2.10]{wagner_q-hodge_2025}). 
\end{remark}

\begin{construction}[Habiro Frobenii]\label{constructionHabiroFrobenii}
    Let $R$ be an étale $\Z$-algebra, and $d \ge 1$ be an integer. The \emph{$d^{\text{th}}$ Frobenius on the Habiro ring} $\mathcal{H}_{R,m}$ is the ring homomorphism
    $$\text{Frob}_d^{\text{Hab}} : \mathcal{H}_{R,m} \longrightarrow \mathcal{H}_{R,dm}^{(d)}$$
    given by $(f_e(q))_{e|m} \mapsto (f'_{e}(q))_{d|e|dm} :=(f_{e/d}(q^d))_{d|e|dm}$. More precisely, this map is induced by the composite map
    $$\displaystyle\prod_{e|m} R[q]^\wedge_{\Phi_e(q)} \longrightarrow \displaystyle\prod_{d|e|dm} R[q]^\wedge_{\Phi_{e/d}(q^d)} \longrightarrow \displaystyle\prod_{d|e|dm} R[q]^\wedge_{\Phi_{e}(q)}$$
    where the first map is given by $(f_e(q))_{e|m} \mapsto (f_{e/d}(q^d))_{d|e|dm}$ and the second map is the $\Phi_{e}(q)$\nobreakdash-com\-pletion on each factor.\footnote{Note that this second map is well-defined as a consequence of the fact that $\Phi_{e}(q)$ divides $\Phi_{e/d}(q^d)$ in $\Z[q]$.} As a consequence of the series of equalities
    $$f'_{e}(q) = f_{e/d}(q^d) = \varphi_p(f_{pe/d}(q^d)) = \varphi_p(f'_{pe}(q))$$
    in $R[q]^{\wedge}_{(\Phi_{e}(q),\Phi_{pe}(q))}$, the image of $\mathcal{H}_{R,m}$ by this composite map is indeed contained in $\mathcal{H}_{R,dm}^{(d)}$.
\end{construction}

\begin{remark}\label{remarkpadicrealisationHabiroandcyclotomicFrobenius}
    Let $R$ be an étale $\Z$-algebra, $m \ge 1$ and $r \ge 0$ be integers, and $p$ be a prime number. 
    Following Lemma~\ref{lemmapq-1completion} (and its proof) and Remark~\ref{remarkpadicrealisationcyclotomic}, there is a natural morphism of commutative $\Z[q]$-algebras $\mathcal{C}_{pm}^{(p)} \rightarrow R[q]/(p,q-1)^r$. This morphism fits naturally in the commutative diagram of commutative rings
    $$\begin{tikzcd}
        \mathcal{H}_{R,m} \ar[d,"\Frob_p^{\Hab}"] \ar[r] & \mathcal{C}_{R,m} \ar[r] \ar[d,"\Frob_p^{\Cyc}"] & R[q]/(p,q-1)^r \ar[d,"\Frob_p^{\Prism}"] \\
        \mathcal{H}_{R,pm}^{(p)} \ar[r] & \mathcal{C}_{R,pm}^{(p)} \ar[r] & R[q]/(p,q-1)^r
    \end{tikzcd}$$
    where the right vertical map is defined as the unique morphism of commutative rings induced by~$\varphi_p$ on $R/p^r$ and sending $q$ to $q^p$. Here we use that $\Phi_e(q)$ is invertible in $R[q]/(p,q-1)^r$ for every integer $e \ge 1$ satisfying $p \nmid e$.
\end{remark}

\begin{lemma}\label{lemmaqghostmapinjectiveHabiro}
    Let $R$ be an étale $\Z$-algebra, and $m \ge 1$ be an integer. Then for any sequence of nonnegative integers $(n_e)_{e|m}$, the morphism of commutative $\Z[q]$-algebras
    $$q\text{-}\gh : \mathcal{H}_{R,m}/\prod_{e|m} \Phi_e(q)^{n_e} \longrightarrow \prod_{e|m} R[q]/\Phi_e(q)^{n_e}, \quad f \longmapsto (f_e(q))_{e|m}$$
    is injective.
\end{lemma}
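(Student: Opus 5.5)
The plan is to show directly that if $f=(f_e(q))_{e|m}\in\mathcal{H}_{R,m}$ satisfies $f_e(q)\in \Phi_e(q)^{n_e}R[q]^\wedge_{\Phi_e(q)}$ for every $e|m$, then $f$ lies in the ideal $\prod_{e|m}\Phi_e(q)^{n_e}\cdot\mathcal{H}_{R,m}$. First, I would note that the target makes sense: the natural map $R[q]/\Phi_e(q)^{n_e}\to R[q]^\wedge_{\Phi_e(q)}/\Phi_e(q)^{n_e}$ is an isomorphism, so the $e$-th coordinate of the map is just reduction of $f_e$. Set $P:=\prod_{e|m}\Phi_e(q)^{n_e}\in\Z[q]$. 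The claim is then that $g:=(f_e(q)/P)_{e|m}$ is a well-defined element of $\mathcal{H}_{R,m}$.

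To see that each $g_e$ is well-defined, I would write $P=\Phi_e(q)^{n_e}\cdot P_e$ with $P_e:=\prod_{e'\neq e}\Phi_{e'}(q)^{n_{e'}}$. One has to check that $P_e$ is invertible in $R[q]^\wedge_{\Phi_e(q)}$, i.e.\ that $\Phi_{e'}(q)$ is a unit there for $e'\neq e$. The tempting argument is false: by \cite[Lemma~2.1]{wagner_q-witt_2024}, $(\Phi_e(q),\Phi_{e'}(q))$ contains $p$ when $e'/e$ is a power of a prime $p$, and $p$ is not invertible in $R[q]^\wedge_{\Phi_e(q)}$. The correct argument is that $R[q]^\wedge_{\Phi_e(q)}$ is $\Phi_e(q)$-adically complete, so a polynomial is a unit if and only if it is a unit modulo $\Phi_e(q)$. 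Modulo $\Phi_e(q)$ we are in $R[\zeta_e]:=R[q]/\Phi_e(q)$, and $\Phi_{e'}(\zeta_e)$ is a unit there when $e'/e$ or $e/e'$ is not a prime power, by the classical norm computation. It is not a unit otherwise, e.g.\ $\Phi_p(1)=p$. So $P_e$ need not be invertible, and this step has to be handled differently.

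I would therefore argue via the Habiro gluing conditions instead. Induct on $\sum n_e$, which reduces the problem to dividing by a single factor $\Phi_{e_0}(q)$, knowing $\Phi_{e_0}\mid f_{e_0}$ in $\widehat{R[q]}_{\Phi_{e_0}}$. For $e\neq e_0$ with $e_0/e$ or $e/e_0$ not a prime power, $\Phi_{e_0}$ is a unit at $e$ and we simply divide. When $e=p^ke_0$ or $e_0=p^ke$, we need $\Phi_{e_0}\mid f_e$, which need not hold a priori. Here I would use $f_e=\varphi_p(f_{pe})$ modulo $(p,\Phi_e)$ together with $R$ étale (so $p$-torsionfree and $\varphi_p$ is an automorphism-like lift) to transport divisibility. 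I expect this to be the main obstacle. Ideally one would use Wagner's description \cite[Theorem~2.9]{wagner_q-hodge_2025} that $\mathcal{H}_{R,m}$ is a $(q^m-1)$-complete torsionfree ring whose reduction modulo $q^m-1$ is $\qWm(R)$. The injectivity of the $q$-ghost map on $\qWm(R)$ for flat $R$ \cite[Lemma~2.23]{wagner_q-witt_2024} then gives the case $n_e=1$ for all $e$. For the general case, I would use filtration by powers of $q^m-1$: torsionfreeness shows that the graded pieces of the ideal filtration are copies of $\qWm(R)$ on which ghost injectivity applies. For varied exponents, I would pass through the intermediate ideals generated by the partial products and run the same argument with the corresponding rings at $d$.
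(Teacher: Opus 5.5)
\textbf{Verdict: there is a genuine gap.} Your proposal does not yet prove the lemma. The gap is in the case of varying exponents, and the uniform case needs one more observation.

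\textbf{The first two strategies.} Neither is a proof. You correctly see why naive division by $P$ fails: $\Phi_{e'}(\zeta_e)$ is not a unit when $e'/e$ is a prime power. But the gluing replacement stops at ``transport divisibility'', which you yourself flag as the main obstacle. So the actual argument is your last paragraph.

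\textbf{Uniform exponents.} The d\'evissage idea is right, but the graded maps are not literally the $q$-ghost map. Filter $\mathcal{H}_{R,m}/(q^m-1)^n$ by powers of $q^m-1$, and filter the target $\Phi_e$-adically factor by factor. Then the map on $\mathrm{gr}^k$ is
\[
\qWm(R)\longrightarrow \prod_{e|m}R[q]/\Phi_e(q),\qquad c\longmapsto \big(u_e^k\, c_e(q)\big)_{e|m},
\]
where $u_e$ is the reduction of $(q^m-1)/\Phi_e(q)$ modulo $\Phi_e(q)$. This $u_e$ is not a unit; for instance $u_1=m$. It is, however, a nonzero element of $\Z[\zeta_e]$, hence a nonzerodivisor on the flat module $R[q]/\Phi_e(q)$. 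You must say this explicitly, together with the fact that $q^m-1$ is a nonzerodivisor on $\mathcal{H}_{R,m}$. If instead you filter the target by powers of $q^m-1$, its graded pieces are not $\prod_e R[q]/\Phi_e(q)$ at all.

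\textbf{Varying exponents: the gap.} Your step through ``intermediate ideals \ldots\ rings at $d$'' does not work. Filter by the partial products $P_k=\prod_{e|m}\Phi_e(q)^{\min(k,n_e)}$. The graded pieces are $\mathcal{H}_{R,m}/\prod_{e\in S_k}\Phi_e(q)$, where $S_k=\{e|m : n_e>k\}$ is an arbitrary set of divisors of $m$, not one of the form $\{e : d|e|m\}$. Nothing about $q$-Witt vectors at $d$ applies to these. Even when $S_k=\{e: d|e|m\}$, you would need an identification $\mathcal{H}_{R,m}/\prod_{d|e|m}\Phi_e(q)\cong\qWm^{(d)}(R)$ and ghost-injectivity at $d$, and neither is available. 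So injectivity on these graded pieces is exactly what remains to be shown.

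\textbf{The fix.} Reduce to the uniform case. Let $n=\max_e n_e$ and $Q:=\prod_{e|m}\Phi_e(q)^{n-n_e}$, so that $QP=(q^m-1)^n$.
\begin{enumerate}
\item If $f_e\in\Phi_e(q)^{n_e}R[q]^\wedge_{\Phi_e(q)}$ for all $e$, then $(Qf)_e\in\Phi_e(q)^n R[q]^\wedge_{\Phi_e(q)}$ for all $e$.
\item The uniform case then gives $Qf\in(q^m-1)^n\mathcal{H}_{R,m}=QP\,\mathcal{H}_{R,m}$.
\item $Q$ is a nonzerodivisor on $\mathcal{H}_{R,m}\subseteq\prod_{e|m}R[q]^\wedge_{\Phi_e(q)}$, since each factor is flat over $\Z[q]$. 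Hence $f\in P\,\mathcal{H}_{R,m}$.
\end{enumerate}

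\textbf{Comparison with the paper.} The paper avoids d\'evissage entirely. By Wagner's theorem, $\mathcal{H}_{R,m}/(q^m-1)^n$ is \'etale over $\Z[q]/(q^m-1)^n$, so $\mathcal{H}_{R,m}/P$ is flat over $\Z[q]/P$. The map $\Z[q]/P\hookrightarrow\prod_{e|m}\Z[q]/\Phi_e(q)^{n_e}$ is injective because the $\Phi_e$ are pairwise coprime irreducibles in a UFD. Tensoring this injection with the flat module and using $\mathcal{H}_{R,m}/\Phi_e(q)^{n_e}\cong R[q]/\Phi_e(q)^{n_e}$ handles all exponents at once. Your route instead relies on the ghost-injectivity of $\qWm(R)$, and needs the two extra inputs above.
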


\begin{proof}
    Because the cyclotomic polynomials $\Phi_e(q)$ are irreducible in the unique factorisation domain $\Z[q]$, the natural morphism of commutative rings
    $$\Z[q]/\prod_{e|m} \Phi_e(q)^{n_e} \longrightarrow \prod_{e|m} \Z[q]/\Phi_e(q)^{n_e}$$
    is injective. Moreover, the quotient of the Habiro ring $\mathcal{H}_{R,m}/(q^m-1)^n$ is étale over the commutative ring $\Z[q]/(q^m-1)^n$ for every integer $n \ge 0$ (\cite[Theorem~2.9]{wagner_q-hodge_2025}). In particular, the commutative ring $\mathcal{H}_{R,m}/\prod_{e|m} \Phi_e(q)^{n_e}$ is flat over $\Z[q]/\prod_{e|m} \Phi_e(q)^{n_e}$. Tensoring the previous \hbox{injective} morphism with the flat $\Z[q]/\prod_{e|m} \Phi_e(q)^{n_e}$-module $\mathcal{H}_{R,m}/\prod_{e|m} \Phi_e(q)^{n_e}$ 
    then induces the desired injective morphism of commutative $\Z[q]$-algebras 
    $$\mathcal{H}_{R,m}/\prod_{e|m} \Phi_e(q)^{n_e} \longrightarrow \prod_{e|m} R[q]/\Phi_e(q)^{n_e}$$
    where we use the canonical identification $(\mathcal{H}_{R,m})^\wedge_{\Phi_e(q)} \cong R[q]^\wedge_{\Phi_e(q)}$ (\cite[2.7]{wagner_q-hodge_2025}).
\end{proof}

\begin{corollary}\label{corollaryinjectiveHabiromandm'}
    Let $R$ be an étale $\Z$-algebra. Then for any integers $m,m' \ge 1$ satisfying $m|m'$, the natural morphism of commutative $\Z[q]$-algebras
    \begin{equation}\label{eq:cor-mod-(qm-1)(qm'-1)}\mathcal{H}_{R,m'}/(q^m-1)(q^{m'}-1) \longrightarrow \mathcal{H}_{R,m}/(q^m-1)^2 \times \prod_{\substack{e|m' \\ e \nmid m}} R[q]/\Phi_e(q)
    \end{equation}
    is injective.
\end{corollary}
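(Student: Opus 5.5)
We reduce this to Lemma~\ref{lemmaqghostmapinjectiveHabiro} by factoring $(q^m-1)(q^{m'}-1)$ into cyclotomic polynomials. Since $q^m-1=\prod_{e|m}\Phi_e(q)$ and $q^{m'}-1=\prod_{e|m'}\Phi_e(q)$, we have
$$(q^m-1)(q^{m'}-1)=\prod_{e|m'}\Phi_e(q)^{n_e},\qquad n_e:=\begin{cases}2 & \text{if } e|m,\\ 1 & \text{if } e|m',\ e\nmid m.\end{cases}$$
By Lemma~\ref{lemmaqghostmapinjectiveHabiro} applied to $\mathcal{H}_{R,m'}$, the map
$$q\text{-}\gh : \mathcal{H}_{R,m'}/(q^m-1)(q^{m'}-1)\longrightarrow \prod_{e|m}R[q]/\Phi_e(q)^2\times\prod_{\substack{e|m'\\ e\nmid m}}R[q]/\Phi_e(q)$$
is therefore injective.

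Next we check that this injective map factors through the map \eqref{eq:cor-mod-(qm-1)(qm'-1)}. The first component of \eqref{eq:cor-mod-(qm-1)(qm'-1)} is induced by the natural projection $\mathcal{H}_{R,m'}\to\mathcal{H}_{R,m}$, $(f_e)_{e|m'}\mapsto(f_e)_{e|m}$. This projection sends $(q^m-1)(q^{m'}-1)$ into $(q^m-1)^2\mathcal{H}_{R,m}$, because $q^{m'}-1=(q^m-1)[m'/m]_{q^m}$. The second component is the $q$-ghost coordinates $f\mapsto f_e(q)$ mod $\Phi_e(q)$ for $e|m'$, $e\nmid m$. Composing with the map
$$q\text{-}\gh:\mathcal{H}_{R,m}/(q^m-1)^2\longrightarrow\prod_{e|m}R[q]/\Phi_e(q)^2,$$
which is Lemma~\ref{lemmaqghostmapinjectiveHabiro} for $\mathcal{H}_{R,m}$ with $n_e=2$, on the first factor recovers exactly the full $q$-ghost map above. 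This holds because the $q$-ghost coordinates $f_e$ for $e|m$ are compatible with the projection. Hence the composite of \eqref{eq:cor-mod-(qm-1)(qm'-1)} with a further map is injective, so \eqref{eq:cor-mod-(qm-1)(qm'-1)} is itself injective.

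The main point requiring care is the bookkeeping of exponents: each $\Phi_e$ with $e|m$ appears squared, while the others appear only once. One also needs the projection to be compatible with the identifications $(\mathcal{H}_{R,m})^\wedge_{\Phi_e(q)}\cong R[q]^\wedge_{\Phi_e(q)}$ used to define the ghost coordinates. This compatibility is immediate from the explicit description of $\mathcal{H}_{R,m}$ as a subring of $\prod_{e|m}R[q]^\wedge_{\Phi_e(q)}$.
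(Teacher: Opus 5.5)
Your proposal is correct and follows essentially the same route as the paper. The paper also post-composes \eqref{eq:cor-mod-(qm-1)(qm'-1)} with the $q$-ghost map on the factor $\mathcal{H}_{R,m}/(q^m-1)^2$ and observes that the composite is injective by Lemma~\ref{lemmaqghostmapinjectiveHabiro}. Your explicit choice of exponents $n_e\in\{1,2\}$ for $\mathcal{H}_{R,m'}$ simply spells out what the paper leaves implicit.
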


\begin{proof}
    The post-composition of the morphism \eqref{eq:cor-mod-(qm-1)(qm'-1)} with the morphism of $\Z[q]$-algebras
    $$ \mathcal{H}_{R,m}/(q^m-1)^2 \times \prod_{\substack{e|m' \\ e \nmid m}} R[q]/\Phi_e(q) \longrightarrow \prod_{e|m} R[q]/\Phi_e(q)^2 \times \prod_{\substack{e|m' \\ e \nmid m}} R[q]/\Phi_e(q)$$
    is injective by Lemma~\ref{lemmaqghostmapinjectiveHabiro}, so the morphism \eqref{eq:cor-mod-(qm-1)(qm'-1)} is also injective.
\end{proof}

\subsection{The cyclotomic logarithm}\label{subsectioncyclotomiclogarithm}

In this subsection, we use the Habiro ring of Section~\ref{subsectionHabirorings} and (a refinement of) the norms on big $q$-Witt vectors of Section~\ref{subsectionnormsonqWittvectors} to construct the cyclotomic logarithm of an étale $\Z$-algebra (Construction~\ref{constructiontruncatedcyclotomiclogarithm} and Definition~\ref{definitioncyclotomiclogarithm}).

\begin{lemma}\label{lemmadivisionbym/e}
    Let $m,m' \ge 1$ be integers satisfying $m|m'$.
    \begin{enumerate}
        \item\label{item:divisible-image} The canonical map of $\Z[q]$-modules $(q^{m'}-1)/(q^{m'}-1)^2 \rightarrow (q^m-1)/(q^m-1)^2$ is divisible by~$\frac{m'}{m}$.
        \item\label{item:divided-image} The induced map $(\frac{m'}{m})^{-1} : (q^{m'}-1)/(q^{m'}-1)^2 \rightarrow (q^m-1)/(q^m-1)^2$ is surjective.
        \item\label{item:isomorphism} In particular, the previous map induces an isomorphism  
        $$(\tfrac{m'}{m})^{-1} : (q^{m'}-1)/(q^m-1)(q^{m'}-1) \xrightarrow{\cong} (q^m-1)/(q^m-1)^2$$ of $\Z[q]/(q^m-1)$-modules.
    \end{enumerate}
\end{lemma}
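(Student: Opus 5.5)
Set $d := m'/m$ and write $q^{m'}-1 = (q^m-1)\,[d]_{q^m}$. The canonical map sends the class of $(q^{m'}-1)f$ to the class of $(q^{m'}-1)f = (q^m-1)[d]_{q^m} f$, viewed in $(q^m-1)/(q^m-1)^2$. The plan is to compute everything in terms of the generator $q^m-1$, using the congruence $[d]_{q^m} \equiv d$ modulo $q^m-1$.

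For \eqref{item:divisible-image}, note that $(q^m-1)/(q^m-1)^2$ is a free $\Z[q]/(q^m-1)$-module of rank one on the class of $q^m-1$, since $q^m-1$ is a nonzerodivisor in $\Z[q]$. It is also $\Z$-torsionfree, because $\Z[q]/(q^m-1)$ is free over $\Z$. Since $[d]_{q^m} \equiv d$ modulo $q^m-1$, the image of $(q^{m'}-1)f$ equals $d\cdot (q^m-1)f$ modulo $(q^m-1)^2$. So the canonical map factors as $d$ times the map
$$(q^{m'}-1)f \longmapsto (q^m-1)f.$$
This map is well defined: if $(q^{m'}-1)f \in (q^{m'}-1)^2$, then $f \in (q^{m'}-1) \subseteq (q^m-1)$, so $(q^m-1)f \in (q^m-1)^2$. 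Torsionfreeness makes the divided map unique, which justifies writing it as $(\tfrac{m'}{m})^{-1}$.

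For \eqref{item:divided-image}, the class of $q^{m'}-1$ maps to the class of $q^m-1$, which generates the target. Surjectivity follows.

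For \eqref{item:isomorphism}, compute the kernel of the divided map. The class of $(q^{m'}-1)f$ lies in the kernel if and only if $(q^m-1)f \in (q^m-1)^2$, that is, $f \in (q^m-1)$. This says exactly that $(q^{m'}-1)f \in (q^m-1)(q^{m'}-1)$. This submodule contains $(q^{m'}-1)^2$ because $q^m-1$ divides $q^{m'}-1$, so the quotient $(q^{m'}-1)/(q^m-1)(q^{m'}-1)$ makes sense. The divided map therefore induces an isomorphism from this quotient onto $(q^m-1)/(q^m-1)^2$. It is $\Z[q]$-linear, and both sides are killed by $q^m-1$, so it is an isomorphism of $\Z[q]/(q^m-1)$-modules.

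No step is a real obstacle. The only points needing care are the torsionfreeness that makes division by $d$ well defined, and identifying the kernel precisely via cancellation of the nonzerodivisor $q^{m'}-1$ in $\Z[q]$.
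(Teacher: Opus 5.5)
Your proposal is correct and follows the paper's route. The paper likewise uses the congruence $[m'/m]_{q^m}\equiv m'/m$ modulo $q^m-1$ together with the $\Z$-torsionfreeness of $(q^m-1)/(q^m-1)^2$; you additionally spell out the well-definedness and kernel computations (by cancelling the nonzerodivisors $q^m-1$ and $q^{m'}-1$ in $\Z[q]$), which the paper leaves implicit.
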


\begin{proof}
    This is a consequence of the congruence
    $$\frac{q^{m'}-1}{q^m-1} =\left[\frac{m'}{m}\right]_{q^m}\equiv \frac{m'}{m} \pmod{q^m-1}$$
    in the commutative ring $\Z[q]$, and of the fact that the module $(q^m-1)/(q^m-1)^2$ is flat over $\Z$.
\end{proof}

\begin{definition}[Nygaard twist]\label{definitionNygaardtwistedcyclotomicring}
    Let $R$ be a commutative ring. For every integer $m \ge 1$, the \emph{$m$-truncated Nygaard twist of $R$} is the invertible $\qWm(R)$-module
    $$\mathcal{N}^{\ge 1} \CRm\{1\} := \qWm(R) \otimes_{\Z[q]} (q^m-1)/(q^m-1)^2.$$
    If $K$ is a number field and $R$ is the étale $\Z$-algebra $\nbrrg$, the \emph{Nygaard twist of $R$} is the $\Z[q]$\nobreakdash-module
    $$\mathcal{N}^{\ge 1} \CR\{1\} := \lim_{\substack{m \ge 1 \\ (m,N)=1}} \mathcal{N}^{\ge 1} \CRm\{1\}$$
    where the limit is taken over integers $m \ge 1$ satisfying $(m,N)=1$,\footnote{The restriction $(m,N)=1$ is not necessary in this section to define the cyclotomic logarithm. We use this convention because it will be necessary to restrict to these integers $m \ge 1$ in order to define the first $q$-polylogarithm class of a cyclotomic unit in the next section (Definition~\ref{definitionpolylogarithmcyclosyntomic}).} and along the maps induced by the Frobenius maps $F_{m'/m}$ on $q$-Witt vectors (Construction~\ref{constructionqfrobeniiandverschiebungen}) and the maps $(\frac{m'}{m})^{-1}$ on the second factor (Lemma~\ref{lemmadivisionbym/e}\,(2)).
\end{definition}

\begin{remark}\label{remarkNygaardwithHabiro}
    Let $R$ be an étale $\Z$-algebra. For every integer $m \ge 1$, there is a natural isomorphism of $\Z[q]$-modules
    $$\mathcal{N}^{\ge 1} \CRm\{1\} \cong (q^m-1)\mathcal{H}_{R,m}/(q^m-1)^2\mathcal{H}_{R,m}$$
    where the Frobenius maps big on $q$-Witt vectors correspond to the canonical restriction maps on Habiro rings (Remark~\ref{remarkHabiroqWitt}).
\end{remark}

\begin{construction}[Derived logarithm]\label{constructiondlog}
    Given a commutative ring $A$ and an ideal $I \subseteq A$, there is a natural short exact sequence of abelian groups
    $$0 \longrightarrow I/I^2 \xlongrightarrow{\text{exp}} \G_m(A/I^2) \longrightarrow \G_m(A/I) \longrightarrow 0$$
    where $\text{exp}$ is given by $x \mapsto 1+x$. This induces a boundary map $$\dlog_{(A,I)} : \G_m(A/I) \longrightarrow I/I^2[1]$$ in the derived category $\mathcal{D}(\Z)$ (see also \cite[Construction 2.7]{mao_prismatic_2024} for a generalisation to animated rings).
\end{construction}

\begin{construction}[Truncated cyclotomic logarithm]\label{constructiontruncatedcyclotomiclogarithm}
    Let $R$ be an étale $\Z$-algebra, and $m \ge 1$ be an integer. The \emph{m-truncated cyclotomic logarithm of} $R$ is the map
    $$\dlog_{\Cyc}^{(m)} : \G_m(R)[-1] \longrightarrow \mathcal{N}^{\ge 1}\CRm\{1\}$$
    in the derived category $\mathcal{D}(\Z)$ defined as the composite
    $$\G_m(R)[-1] \xlongrightarrow{\Pi_m} \G_m(\qW_m(R))[-1] \xlongrightarrow{\dlog} \mathcal{N}^{\ge 1}\CRm\{1\}$$
    where the first map is the cyclotomic norm of Proposition~\ref{prop:q-angeltveit} and the second map is the derived logarithm of Construction~\ref{constructiondlog} for the pair $(A,I) := (\mathcal{H}_{R,m},(q^m-1))$, where $\mathcal{H}_{R,m}$ and $\mathcal{N}^{\ge 1}\CRm\{1\}$ are respectively defined in Variant~\ref{varianttruncatedHabiroring} and in Definition~\ref{definitionNygaardtwistedcyclotomicring}. Here we use the identification $\mathcal{H}_{R,m}/(q^m-1)\cong\qWm(R)$ (Remark~\ref{remarkHabiroqWitt}).
\end{construction}

In the rest of this subsection, we construct the cyclotomic logarithm
$$\dlog_{\Cyc} : \G_m(R)[-1] \longrightarrow \mathcal{N}^{\ge 1} \CR\{1\}$$
by proving that the truncated cyclotomic logarithms of Construction~\ref{constructiontruncatedcyclotomiclogarithm} are compatible, in the derived category $\mathcal{D}(\Z)$, between different integers $m \ge 1$. To do so, we construct compatible homotopies $h_{m,m'}$ making the suitable transition diagrams commute (Proposition~\ref{propositioncompatiblehomotopies}). 
The homotopies $h_{m,m'}$ are in turn constructed out of a partial Habiro lift $\tilde{\Pi}_{m'/m}$ of the cyclotomic norms $\Pi_{m'/m}$ of Section~\ref{subsectionnormsonqWittvectors}, which we call the \emph{lifted cyclotomic norms} (Proposition~\ref{propositionliftedcyclotomicpower} and Remark~\ref{remarkHabirologarithmcannotexist}).

\begin{lemma}\label{lemmadiagramchase}
    Let 
    \[\begin{tikzcd}
	& {B_0} & {C_0} \\
	{A_1} & {B_1} & {C_1} \\
	{A_2} & {B_2} & {C_2}
	\arrow["{\beta_0}", from=1-2, to=1-3]
	\arrow["{f_B}"', from=1-2, to=2-2]
	\arrow["{f_C}"', from=1-3, to=2-3]
	\arrow["{h_C}", shift left=5, bend right=-30pt, from=1-3, to=3-3]
	\arrow["{\alpha_1}", hook, from=2-1, to=2-2]
	\arrow["{\beta_1}", two heads, from=2-2, to=2-3]
	\arrow["{g_A}", two heads, from=3-1, to=2-1]
	\arrow["{\alpha_2}", hook, from=3-1, to=3-2]
	\arrow["{g_B}", from=3-2, to=2-2]
	\arrow["{\beta_2}", two heads, from=3-2, to=3-3]
	\arrow["{g_C}", from=3-3, to=2-3]
    \end{tikzcd}\]
    be a commutative diagram of abelian groups whose middle and bottom lines are short exact sequences. If the morphism $g_A$ is surjective, then the image of $f_B$ is contained in the image of $g_B$. In particular, if the morphism $g_B$ is injective, then there exists a unique morphism of abelian groups $h_B : B_0 \rightarrow B_2$ satisfying $f_B=g_B \circ h_B$.
\end{lemma}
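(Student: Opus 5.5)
This is a diagram chase on elements. Let me first record what "commutative" means here, given the shape of the diagram:
\begin{itemize}
    \item the squares give $\beta_1 \circ f_B = f_C \circ \beta_0$, $\beta_1 \circ g_B = g_C \circ \beta_2$ and $\alpha_1 \circ g_A = g_B \circ \alpha_2$;
    \item the curved arrow $h_C : C_0 \rightarrow C_2$ satisfies $g_C \circ h_C = f_C$.
\end{itemize}
The plan is to prove the inclusion $\operatorname{im}(f_B) \subseteq \operatorname{im}(g_B)$ directly. The existence and uniqueness of $h_B$ then follow formally once $g_B$ is injective.

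Fix $b_0 \in B_0$ and set $c_2 := h_C(\beta_0(b_0)) \in C_2$. Since $\beta_2$ is surjective, choose $b_2 \in B_2$ with $\beta_2(b_2) = c_2$. Then
$$\beta_1(g_B(b_2)) = g_C(\beta_2(b_2)) = g_C(h_C(\beta_0(b_0))) = f_C(\beta_0(b_0)) = \beta_1(f_B(b_0)).$$
So $f_B(b_0) - g_B(b_2)$ lies in $\ker \beta_1 = \operatorname{im}(\alpha_1)$, by exactness of the middle row; write it as $\alpha_1(a_1)$ with $a_1 \in A_1$.

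Now use the hypothesis that $g_A$ is surjective to write $a_1 = g_A(a_2)$ with $a_2 \in A_2$. Then $\alpha_1(a_1) = \alpha_1(g_A(a_2)) = g_B(\alpha_2(a_2))$, and therefore
$$f_B(b_0) = g_B\big(b_2 + \alpha_2(a_2)\big) \in \operatorname{im}(g_B).$$
This proves the first claim. Note that only surjectivity of $\beta_2$ and exactness of the middle row at $B_1$ are actually used, together with the surjectivity of $g_A$.

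If $g_B$ is moreover injective, then $g_B$ is a bijection onto its image. Since $\operatorname{im}(f_B) \subseteq \operatorname{im}(g_B)$, we can define $h_B := g_B^{-1} \circ f_B$. This is a morphism of abelian groups because the inverse of an injective homomorphism on its image is a homomorphism. Uniqueness is immediate: $g_B \circ h_B = g_B \circ h_B'$ forces $h_B = h_B'$ by injectivity.

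There is no real obstacle here. The only care needed is in the bookkeeping of which commutativity relations the diagram imposes, in particular reading the curved arrow $h_C$ as satisfying $g_C \circ h_C = f_C$.
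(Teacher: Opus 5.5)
Your proof is correct and follows the paper's argument. The paper also first shows $\beta_1(f_B(b_0)) \in g_C(C_2)$ using $g_C \circ h_C = f_C$, and then gets membership in $\operatorname{im}(g_B)$ from the surjectivity of $g_A$; it cites the snake lemma for that second step, whereas you carry it out as an explicit element chase.
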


\begin{proof}
    It suffices to prove the inclusion $f_B(B_0)\subseteq g_B(B_2)$ as subsets of $B_1$. We prove first that $f_B(B_0) \subseteq \beta_1^{-1}(g_C(C_2))$ and then that $\beta_1^{-1}(g_C(C_2)) \subseteq g_B(B_2)$. The first inclusion follows from the existence of the morphism $h_C$: given an element $b_0 \in B_0$, we have
    $$\beta_1(f_B(b_0)) = f_C(\beta_0(b_0)) = g_C(h_C(\beta_0(b_0))) \in g_C(C_2).$$
    The second inclusion follows the snake lemma and the surjectivity of the morphism $g_A$.
\end{proof}

\begin{proposition}[Lifted cyclotomic norms]\label{propositionliftedcyclotomicpower}
    Let $R$ be an étale $\Z$-algebra. Then for any integers $m,m' \ge 1$ satisfying $m|m'$, the morphism of multiplicative monoids
    \begin{equation}\label{eq:Pi-tilde}
    \tilde{\Pi}_{m'/m} : \prod_{e|m} R[q]/\Phi_e(q)^2 \longrightarrow \prod_{e|m} R[q]/\Phi_e(q)^2 \times \prod_{\substack{e|m' \\ e \nmid m}} R[q]/\Phi_e(q)
    \end{equation}
    given by $(f_e(q))_{e|m} \mapsto (f'_e(q))_{e|m'} := \big(f_{(e,m)}(q^{\frac{e}{(e,m)}})^{\frac{m'}{[e,m]}}\big)_{e|m'}$ is well-defined, and restricts to a morphism of multiplicative monoids
    $$\tilde{\Pi}_{m'/m} : \mathcal{H}_{R,m}/(q^m-1)^2 \longrightarrow \mathcal{H}_{R,m'}/(q^m-1)(q^{m'}-1).$$
\end{proposition}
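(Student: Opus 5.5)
The plan is to treat the two assertions separately: first check that the formula \eqref{eq:Pi-tilde} is well defined on the product, then show that it preserves the Habiro subring. For the second part I would use Corollary~\ref{corollaryinjectiveHabiromandm'} to regard $\mathcal{H}_{R,m'}/(q^m-1)(q^{m'}-1)$ as a subring of the target of \eqref{eq:Pi-tilde}, and combine Proposition~\ref{prop:q-angeltveit} with a diagram chase in the style of Lemma~\ref{lemmadiagramchase}.

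\emph{Well-definedness.} Multiplicativity of the formula is clear, so only compatibility with the moduli needs checking.
\begin{itemize}
\item For $e|m$ we have $(e,m)=e$ and $[e,m]=m$, so the component is $f_e(q)^{m'/m}$. This makes sense modulo $\Phi_e(q)^2$.
\item For $e|m'$ with $e\nmid m$, the input $f_{(e,m)}$ is only defined modulo $\Phi_{(e,m)}(q)^2$. After substituting $q\mapsto q^{e/(e,m)}$ it is defined modulo $\Phi_{(e,m)}(q^{e/(e,m)})^2$. As in the proof of Lemma~\ref{lemmanormpreserveimageofqghostmap}, $\Phi_e(q)$ divides $\Phi_{(e,m)}(q^{e/(e,m)})$, so the component is well defined modulo $\Phi_e(q)$.
\item The identity $\tilde{\Pi}_{m''/m'}\circ\tilde{\Pi}_{m'/m}=\tilde{\Pi}_{m''/m}$ holds on the ghost-type products by the computation in Lemma~\ref{lemmanormpreserveimageofqghostmap}. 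One should also check that the Habiro targets are compatible under the relevant reductions. Granting this, I would reduce to the case where $m'/m=p$ is prime.
\end{itemize}

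\emph{Restriction to Habiro rings.} Fix $f\in\mathcal{H}_{R,m}/(q^m-1)^2$ and let $c$ be its image in $\mathcal{H}_{R,m}/(q^m-1)\cong\qWm(R)$ (Remark~\ref{remarkHabiroqWitt}).
\begin{enumerate}
\item By Proposition~\ref{prop:q-angeltveit}, $\Pi_{m'/m}(c)\in\qWmm(R)\cong\mathcal{H}_{R,m'}/(q^{m'}-1)$ has $q$-ghost coordinates equal to $\tilde{\Pi}_{m'/m}(f)$ modulo $\prod_{e|m'}\Phi_e(q)$.
\item Choose any lift $g\in\mathcal{H}_{R,m'}/(q^m-1)(q^{m'}-1)$ of $\Pi_{m'/m}(c)$.
\item Consider the short exact sequence $0\to (q^{m'}-1)/(q^m-1)(q^{m'}-1)\to \mathcal{H}_{R,m'}/(q^m-1)(q^{m'}-1)\to \mathcal{H}_{R,m'}/(q^{m'}-1)\to 0$. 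Map it injectively into the corresponding sequence of products; in the latter the kernel is $\prod_{e|m}\Phi_e(q)R[q]/\Phi_e(q)^2$.
\item The difference $\Delta:=\tilde{\Pi}_{m'/m}(f)-g$ lies in this kernel. It suffices to show that $\Delta$ lies in the image of $(q^{m'}-1)\mathcal{H}_{R,m'}$, because then $g+\Delta$ is a Habiro element with the required ghost image. This is exactly the situation of Lemma~\ref{lemmadiagramchase}, applied to these additive groups (not to the multiplicative monoids), with $A_1$ taken to be the image of $A_2$.
\end{enumerate}

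The image of $(q^{m'}-1)\mathcal{H}_{R,m'}$ in the kernel is $\frac{m'}{m}$ times the image of $(q^m-1)\mathcal{H}_{R,m}/(q^m-1)^2$. This follows from Lemma~\ref{lemmadivisionbym/e}\,(3), using that $[m'/m]_{q^m}\equiv m'/m$ modulo $q^m-1$, together with surjectivity of the restriction $\mathcal{H}_{R,m'}\to\mathcal{H}_{R,m}$. So the claim becomes: with $m'=pm$, the components $\Delta_e=f_e(q)^p-g_e(q)$ for $e|m$ equal $p(q^m-1)h_e$ for some $h\in\mathcal{H}_{R,m}$.

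\emph{Main obstacle.} I expect this divisibility to be the hard step. I would attack it in two stages.
\begin{itemize}
\item \emph{Divisibility by $p$.} Use the Habiro gluing conditions at the components $pe|m'$ with $e|m$, namely $g_e\equiv\varphi_p(g_{pe})$ modulo $(p,\Phi_e(q))$-adic precision. Combine this with $g_{pe}\equiv f_{(pe,m)}(q^{\ast})^{\ast}$ modulo $\Phi_{pe}(q)$ and with $\varphi_p(x)\equiv x^p$ modulo $p$. This should yield $f_e^p\equiv g_e$ modulo $p\Phi_e(q)$, mirroring the three-case analysis in Lemma~\ref{lemmanormpreserveimageofqghostmap}, with one extra order of $\Phi_e(q)$ tracked.
\item \emph{Habiro compatibility of the quotient.} Show that the family $\Delta_e/p$ still satisfies the gluing conditions defining $\mathcal{H}_{R,m}$ modulo $(q^m-1)^2$. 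For this I would use that $R$ is $\Z$-flat and that the $q$-ghost map of Lemma~\ref{lemmaqghostmapinjectiveHabiro} is injective.
\end{itemize}
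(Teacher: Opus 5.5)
Your strategy is the paper's: take a lift $g$ of $\Pi_{m'/m}(c)$, embed via Corollary~\ref{corollaryinjectiveHabiromandm'}, and run the chase of Lemma~\ref{lemmadiagramchase}. You correctly isolate the real content: $\Delta=\tilde{\Pi}_{m'/m}(f)-g$ must lie in $\tfrac{m'}{m}(q^m-1)\mathcal{H}_{R,m}/(q^m-1)^2$.

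\textbf{The gap.} Your proof of this, divisibility by $p$ via $\varphi_p(x)\equiv x^p \bmod p$, breaks down for $p=2$. There the claim, and in fact the statement, is false. Take $R=\Z$, $m=1$, $m'=2$, $f=-1$. Then
\[
\tilde{\Pi}_{2/1}(-1)=\bigl(1 \bmod (q-1)^2,\ -1\bmod (q+1)\bigr).
\]
Any $g=(g_1,g_2)\in\mathcal{H}_{\Z,2}$ with these components satisfies $g_1=g_2$ in $\Z_2[\![q-1]\!]$. Hence $g$ has a single image in $\Z_2[\![q-1]\!]/((q-1)^2,q+1)\cong\Z/4$, where $q-1\mapsto -2$. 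But $g_1$ maps to $1$ and $g_2$ maps to $-1$, a contradiction.

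In your notation: $\Pi_{2/1}(-1)=q$ in $\qW_2(\Z)=\Z[q]/(q^2-1)$. Every lift satisfies $g\equiv 1+(1+2k)(q-1)$ modulo $(q-1)^2$. So $\Delta\equiv-(1+2k)(q-1)$, which never lies in the image $2(q-1)\Z$ of $(q^2-1)\mathcal{H}_{\Z,2}$.

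The mechanism is as follows. For odd $p$ one has $\Phi_p(q)\equiv p\bigl(1+\tfrac{p-1}{2}(q-1)\bigr)$ modulo $(q-1)^2$. So the overlap ideal $((q-1)^2,\Phi_p(q))$ equals $((q-1)^2,p)$, and a Fermat congruence modulo $p$ is exactly what is needed. For $p=2$ the overlap ideal is $((q-1)^2,q+1)$, with quotient $\Z/4$. One would then need $a^2\equiv a \bmod 4$ for the constant term $a$ of $f$, which fails for $a=-1$ and for $a=2$.

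\textbf{The paper's proof has the same gap.} No argument can close your step when $m'=2m$; this is precisely the case Remark~\ref{remarkcomparisontoMao} claims the explicit formula handles. The paper asserts that the right-hand square of its diagram commutes ``by Proposition~\ref{prop:q-angeltveit}''. That proposition only shows that $f^{m'/m}$ and $\Pi_{m'/m}(c)$ have the same $q$-ghost coordinates. This does not force them to agree in $\qWmm(R)/(q^m-1)^2$. In the example they are $1$ and $q$ in $\Z[q]/((q-1)^2,q^2-1)=\Z[q]/((q-1)^2,2(q-1))$, and these differ. So the even case has to be excluded, for instance by taking $2\mid N$. Only for odd $p$ can a mod-$p$ argument like yours be expected to work.

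\textbf{A separate problem.} Your reduction to $m'/m$ prime does not typecheck as written. The target $\mathcal{H}_{R,m'}/(q^m-1)(q^{m'}-1)$ of $\tilde{\Pi}_{m'/m}$ is not the source $\mathcal{H}_{R,m'}/(q^{m'}-1)^2$ of $\tilde{\Pi}_{m''/m'}$. You would need to induct on a variant of the statement whose source is $\mathcal{H}_{R,m'}/(q^m-1)(q^{m'}-1)$.
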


\begin{proof}
    The fact that the morphism \eqref{eq:Pi-tilde} is well-defined is a consequence of the fact that, for every integer $e \ge 1$ satisfying $e|m'$, we have $\Phi_e(q)^2|\Phi_{(e,m)}(q^{\frac{e}{(e,m)}})^2$ in $\Z[q]$. It is in addition multiplicative by construction. 
    
    In the rest of the proof, we use the injectivity of Lemma~\ref{lemmaqghostmapinjectiveHabiro} to interpret the commutative $\Z[q]$\nobreakdash-algebras \hbox{$\mathcal{H}_{R,m}/(q^m-1)^2$} and $\mathcal{H}_{R,m'}/(q^m-1)(q^{m'}-1)$ as subrings of the source and target of the morphism $\eqref{eq:Pi-tilde}$. 
    To prove that the image of $\mathcal{H}_{R,m}/(q^m-1)^2$ by the morphism \eqref{eq:Pi-tilde} is contained in $\mathcal{H}_{R,m'}/(q^m-1)(q^{m'}-1)$, first note that if $e|m$, then $f'_e(q) = f_e(q)^{\frac{m'}{m}}$, so that $(f'_e(q))_{e|m}$ defines an element of $\mathcal{H}_{R,m}/(q^m-1)^2$. 
    Moreover, we have that $\mathcal{H}_{R,m'}/(q^m-1)^2 \cong \mathcal{H}_{R,m}/(q^m-1)^2$ by \cite[2.7]{wagner_q-hodge_2025}, hence the isomorphism of commutative $\Z[q]$-algebras $$\mathcal{H}_{R,m}/((q^m-1)^2,(q^{m'}-1)) \cong \qWmm(R)/(q^m-1)^2$$ 
    by \cite[Theorem~$2.9$]{wagner_q-hodge_2025}.
    In particular, there is a commutative diagram of abelian groups
    
    \[\begin{tikzcd}[column sep=1.5 em]
	& {\frac{\mathcal{H}_{R,m}}{(q^m-1)^2\mathcal{H}_{R,m}}} & {\qW_m(R)} \\
	{\frac{(q^{m'}-1)\mathcal{H}_{R,m'}}{(q^{m}-1)(q^{m'}-1)\mathcal{H}_{R,m'}}} & \begin{array}{c} \frac{\mathcal{H}_{R,m}}{(q^m-1)^2\mathcal{H}_{R,m}}\times \prod_{\substack{e|m' \\e \nmid m}}{\frac{R[q]}{\Phi_e(q)}} \end{array} & \begin{array}{c} \frac{\qW_{m'}(R)}{(q^m-1)^2}\times \prod_{\substack{e|m' \\e \nmid m}}{\frac{R[q]}{\Phi_e(q)}} \end{array} \\
	{\frac{(q^{m'}-1)\mathcal{H}_{R,m'}}{(q^{m}-1)(q^{m'}-1)\mathcal{H}_{R,m'}}} & {\frac{\mathcal{H}_{R,m'}}{(q^{m}-1)(q^{m'}-1)\mathcal{H}_{R,m'}}} & {\qW_{m'}(R)}
	\arrow[two heads, from=1-2, to=1-3]
	\arrow["{\tilde{\Pi}_{m'/m}}"', from=1-2, to=2-2]
	\arrow["{\Pi_{m'/m}}"', from=1-3, to=2-3]
	\arrow["{\Pi_{m'/m}}", shift left=5, bend right=-90pt, from=1-3, to=3-3]
	\arrow[hook, from=2-1, to=2-2]
	\arrow[no head, from=2-1, to=3-1]
	\arrow[shift right, no head, from=2-1, to=3-1]
	\arrow[two heads, from=2-2, to=2-3]
	\arrow[hook, from=3-1, to=3-2]
	\arrow["{\can}",hook', from=3-2, to=2-2]
	\arrow[two heads, from=3-2, to=3-3]
	\arrow["{\can}", from=3-3, to=2-3]
    \end{tikzcd}\]
    whose middle and bottom lines are short exact sequences, and where the commutativity of the right part of the diagram is a consequence of Proposition~\ref{prop:q-angeltveit}. The natural morphism of abelian groups
    $$\mathcal{H}_{R,m'}/(q^m-1)(q^{m'}-1) \longrightarrow \mathcal{H}_{R,m}/(q^m-1)^2 \times \displaystyle\prod_{\substack{e|m' \\e \nmid m}} R[q]/\Phi_e(q)$$
    is moreover injective by Corollary~\ref{corollaryinjectiveHabiromandm'}. 
    So the desired result is a consequence of Lemma~\ref{lemmadiagramchase} applied to the previous diagram.
\end{proof}

\begin{corollary}\label{corollaryliftedcyclotomicpowercommutativediagram}
    Let $R$ be an étale $\Z$-algebra. Then for any integers $m,m' \ge 1$ satisfying $m|m'$, the natural diagram of abelian groups
    $$\begin{tikzcd}
        \mathcal{N}^{\ge 1}\CRm\{1\} \arrow[r,"\emph{exp}"] \arrow[d,"\frac{m'}{m}","\cong"'] & \G_m\big({\frac{\mathcal{H}_{R,m}}{(q^m-1)^2\mathcal{H}_{R,m}}}\big) \arrow[d,"\tilde{\Pi}_{m'/m}"] \\
        {\frac{(q^{m'}-1)\mathcal{H}_{R,m'}}{(q^{m}-1)(q^{m'}-1)\mathcal{H}_{R,m'}}} \arrow[r,"\emph{exp}"] & \G_m\big({\frac{\mathcal{H}_{R,m'}}{(q^m-1)(q^{m'}-1)\mathcal{H}_{R,m'}}}\big)
    \end{tikzcd}$$
    is commutative, where $\mathcal{N}^{\ge 1}\CRm\{1\}$ is identified with $(q^m-1)\mathcal{H}_{R,m}/(q^m-1)^2 \mathcal{H}_{R,m}$ (Remark~\ref{remarkNygaardwithHabiro}).
\end{corollary}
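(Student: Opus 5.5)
We check commutativity after passing to $q$-ghost coordinates. By Corollary~\ref{corollaryinjectiveHabiromandm'}, the target $\G_m\big(\mathcal{H}_{R,m'}/(q^m-1)(q^{m'}-1)\big)$ embeds into the units of $\prod_{e|m} R[q]/\Phi_e(q)^2 \times \prod_{e|m',\, e\nmid m} R[q]/\Phi_e(q)$. Both composites land in this target: $\tilde{\Pi}_{m'/m}$ by Proposition~\ref{propositionliftedcyclotomicpower}, and $\exp$ composed with the isomorphism of Lemma~\ref{lemmadivisionbym/e}\,(3) by construction. So it suffices to compare the two composites coordinate by coordinate for each $e|m'$.

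Start with $x \in (q^m-1)\mathcal{H}_{R,m}/(q^m-1)^2$, with ghost coordinates $(x_e(q))_{e|m}$. Each $x_e$ is divisible by $\Phi_e(q)$, so $x_e^2 \equiv 0$ modulo $\Phi_e(q)^2$.

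\textbf{Top-right path.} Here $x$ goes to $1+x$ and then to $\tilde{\Pi}_{m'/m}(1+x)$. Its $e$-th coordinate is $\big(1+x_{(e,m)}(q^{e/(e,m)})\big)^{m'/[e,m]}$.
\begin{itemize}
\item If $e|m$, this is $(1+x_e)^{m'/m}$, which equals $1+\frac{m'}{m}x_e$ modulo $\Phi_e(q)^2$, by the binomial expansion and $x_e^2\equiv 0$.
\item If $e\nmid m$, the value $x_{(e,m)}(q^{e/(e,m)})$ is divisible by $\Phi_{(e,m)}(q^{e/(e,m)})$, which is divisible by $\Phi_e(q)$. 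So this coordinate is $1$ modulo $\Phi_e(q)$.
\end{itemize}

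\textbf{Left-bottom path.} Let $y := \frac{m'}{m}\cdot x$ be the preimage of $x$ under the isomorphism $(\frac{m'}{m})^{-1}$ of Lemma~\ref{lemmadivisionbym/e}\,(3), viewed in $(q^{m'}-1)\mathcal{H}_{R,m'}/(q^m-1)(q^{m'}-1)$. Concretely, $y$ is represented by any lift $\tilde{y}\in (q^{m'}-1)\mathcal{H}_{R,m'}$ whose image under the map of Lemma~\ref{lemmadivisionbym/e}, after dividing by $\frac{m'}{m}$, is $x$. The ghost coordinates of $1+y$ are then as follows.
\begin{itemize}
\item For $e\nmid m$, the coordinate $\tilde{y}_e$ is divisible by $\Phi_e(q)$, since $q^{m'}-1$ is. So this coordinate is $1$ modulo $\Phi_e(q)$.
\item For $e|m$, we need $\tilde{y}_e \equiv \frac{m'}{m}x_e$ modulo $\Phi_e(q)^2$. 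This holds because the canonical map $(q^{m'}-1)/(q^{m'}-1)^2\to(q^m-1)/(q^m-1)^2$ is the restriction on Habiro rings (Remark~\ref{remarkNygaardwithHabiro}), which in ghost coordinates just forgets the indices $e\nmid m$. Writing $\tilde{y} = [m'/m]_{q^m}\, z$ with $z\in (q^m-1)\mathcal{H}_{R,m'}$, we get $\tilde{y}_e = [m'/m]_{q^m}z_e$. Since $z_e$ is divisible by $\Phi_e(q)$ and $[m'/m]_{q^m}\equiv m'/m$ modulo $\Phi_e(q)$, this gives $\tilde{y}_e \equiv \frac{m'}{m}z_e$ modulo $\Phi_e(q)^2$. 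Finally, $z$ restricts to $x$ by definition of the divided map.
\end{itemize}

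The two paths therefore agree coordinatewise, which proves commutativity.

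The step needing the most care is pinning down the isomorphism $\frac{m'}{m}$ concretely, i.e. writing the element of $(q^{m'}-1)\mathcal{H}_{R,m'}$ as $[m'/m]_{q^m}z$, and checking that its ghost coordinates are as claimed. This uses the flatness and ghost-injectivity of Lemma~\ref{lemmaqghostmapinjectiveHabiro} and the identification $\mathcal{H}_{R,m'}/(q^m-1)^2\cong\mathcal{H}_{R,m}/(q^m-1)^2$. Once that is done, the rest is the binomial computation above.
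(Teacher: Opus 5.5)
Your proof is correct and follows essentially the same route as the paper: reduce to $q$-ghost coordinates by injectivity (the paper cites Lemma~\ref{lemmaqghostmapinjectiveHabiro} directly, you cite its consequence Corollary~\ref{corollaryinjectiveHabiromandm'}), then compare coordinatewise. For $e\mid m$ both of you use the binomial expansion modulo $\Phi_e(q)^2$, and for $e\nmid m$ both paths are trivial modulo $\Phi_e(q)$. Your explicit unwinding of the isomorphism $\frac{m'}{m}$ via $\tilde y=[m'/m]_{q^m}z$ and the congruence $[m'/m]_{q^m}\equiv m'/m \pmod{\Phi_e(q)}$ spells out a step that the paper's proof leaves implicit.
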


\begin{proof}
    By Lemma~\ref{lemmaqghostmapinjectiveHabiro}, it suffices to prove that $\tilde{\Pi}_{m'/m}\circ \exp$ and $\exp\circ \frac{m'}{m}$ agree as maps from $\prod_{e|m}{(\Phi_e(q))/(\Phi_e(q))^2}$ to $\prod_{e|m} R[q]/\Phi_e(q)^2 \times \prod_{e|m',e \nmid m} R[q]/\Phi_e(q)$. 
    Let $(\Phi_e(q)c_e(q))_{e|m}$ be an element of $\prod_{e|m}{(\Phi_e(q))/(\Phi_e(q)^2)}$, and $(c'_e(q))_{e|m'}$ be its image under the function $\tilde{\Pi}_{m,m'} \circ \text{exp}$. If $e|m$, then 
    $$c'_e(q) = \big(1+\Phi_e(q)c_e(q)\big)^{\frac{m'}{m}} \equiv 1 + \tfrac{m'}{m}\Phi_e(q)c_e(q) = \text{exp}\big(\tfrac{m'}{m}\Phi_e(q)c_e(q)\big)$$
    in $R[q]/\Phi_e(q)^2$. 
    And if $e \nmid m$, then the $e^{th}$ $q$-ghost component of the commutative $\Z[q]$-module $(q^{m'}-1)/(q^ m-1)(q^{m'}-1)$ is zero by $\Z[q]/(q^m-1)$-linearity, and 
    $$c'_e(q) = \big(1+\Phi_{(e,m)}(q^{\frac{e}{(e,m)}})c_{(e,m)}(q^{\frac{e}{(e,m)}})\big)^{\frac{m'}{[e,m]}} \equiv 1$$
    in $R[q]/\Phi_e(q)$, because $\Phi_e(q) | \Phi_{(e,m)}(q^{\frac{e}{(e,m)}})$.
\end{proof}

\begin{remark}\label{remarkHabirologarithmcannotexist}
    Contrary to what Proposition \ref{propositionliftedcyclotomicpower} may suggest, the lifted cyclotomic norm $\tilde{\Pi}_{m'/m}$ cannot be naturally lifted to a morphism of abelian groups
    $$\tilde{\Pi}_{m'/m} : \G_m\big(\mathcal{H}_{R,m}/(q^m-1)^2\big) \longrightarrow \G_m\big(\mathcal{H}_{R,m'}/(q^{m'}-1)^2\big).$$
    Indeed, in the case where $(m,m'):=(1,m)$, such a lift would induce a splitting of the extension 
    $[\dlog_{\Cyc}^{(m)}] \in \text{Ext}^1_{\Z}(\mathcal{N}^{\ge 1}\CRm\{1\},\G_m(\qWm(R)))$ 
    of Construction~\ref{constructiontruncatedcyclotomiclogarithm}, and in particular that the corresponding truncated cyclotomic logarithms are identically zero. However, we will prove that these cyclotomic logarithms are not identically zero in Section~\ref{sec:first-polylog}.
\end{remark}

\begin{proposition}\label{propositioncompatiblehomotopies}
    Let $R$ be an étale $\Z$-algebra. Then for every pair of positive integers $(m,m')$ satisfying $m|m'$, there exists an homotopy $h_{m,m'}$ making the diagram
    $$\begin{tikzcd}[column sep=large]
        \G_m(R)[-1] \arrow[d,"\emph{id}"] \arrow[r,"\dlog_{\Cyc}^{(m^\prime)}"] & \mathcal{N}^{\ge 1}\mathcal{C}_{R,m'}\{1\} \arrow[d,"F_{m'/m}\{1\}"] \\
        \G_m(R)[-1] \arrow[r,"\dlog_{\Cyc}^{(m)}"] & \mathcal{N}^{\ge 1} \CRm\{1\}
    \end{tikzcd}$$
    commute in the derived category $\mathcal{D}(\Z)$, and such that $h_{m',m''} \circ h_{m,m'} = h_{m,m''}$ for all integers $m,m',m'' \ge 1$ satisfying $m|m'|m''$.
\end{proposition}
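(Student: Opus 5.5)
We realise each truncated cyclotomic logarithm as an explicit map of two-term complexes, and build the homotopies $h_{m,m'}$ as explicit maps of extensions; Proposition~\ref{propositionliftedcyclotomicpower} and Corollary~\ref{corollaryliftedcyclotomicpowercommutativediagram} supply the needed map. Concretely, $\dlog^{(m)}_{\Cyc}$ is the composite of $\Pi_m$ with the boundary map of the extension
$$E_m : \quad 0 \to (q^m-1)\mathcal{H}_{R,m}/(q^m-1)^2 \to \G_m(\mathcal{H}_{R,m}/(q^m-1)^2) \to \G_m(\qWm(R)) \to 0.$$
Equivalently, it is represented by the map $\G_m(R)[-1] \to [\G_m(\mathcal{H}_{R,m}/(q^m-1)^2) \to \G_m(\qWm(R))]$ obtained by pulling back $E_m$ along $\Pi_m$. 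The transition map $F_{m'/m}\{1\}$ is, by Remark~\ref{remarkNygaardwithHabiro}, the projection $\mathcal{H}_{R,m'} \to \mathcal{H}_{R,m}$ followed by division by $m'/m$ (Lemma~\ref{lemmadivisionbym/e}).

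To compare the two logarithms, I introduce an intermediate extension
$$E_{m,m'} : \quad 0 \to (q^{m'}-1)\mathcal{H}_{R,m'}/(q^m-1)(q^{m'}-1) \to \G_m(\mathcal{H}_{R,m'}/(q^m-1)(q^{m'}-1)) \to \G_m(\qWmm(R)) \to 0.$$
There is a map of extensions $E_{m'} \to E_{m,m'}$ given by reduction. Lemma~\ref{lemmadivisionbym/e}\,(3) identifies the kernel of $E_{m,m'}$ with $\mathcal{N}^{\ge 1}\CRm\{1\}$ via $(\tfrac{m'}{m})^{-1}$, and under this identification the induced map on kernels is exactly $F_{m'/m}\{1\}$. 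Hence $F_{m'/m}\{1\}\circ \dlog^{(m')}_{\Cyc}$ equals the boundary of $E_{m,m'}$ composed with $\Pi_{m'}$. On the other side, Proposition~\ref{propositionliftedcyclotomicpower} and Corollary~\ref{corollaryliftedcyclotomicpowercommutativediagram} give a map of extensions $E_m \to E_{m,m'}$. On kernels it is multiplication by $m'/m$, which is the identity under the above identification. On the middle terms it is $\tilde{\Pi}_{m'/m}$. On the quotients it is $\Pi_{m'/m}$ by the commutative diagram in the proof of Proposition~\ref{propositionliftedcyclotomicpower}. Since $\Pi_{m'/m}\circ\Pi_m=\Pi_{m'}$ (Remark~\ref{remarkcompatibilityupgradedcyclotomicpowers}), the boundary of $E_{m,m'}$ composed with $\Pi_{m'}$ also equals $\dlog^{(m)}_{\Cyc}$. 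Both identifications are strict equalities of maps of chain complexes $\G_m(R)[-1]\to[\cdots]$ into the two-term model of $E_{m,m'}$. Composing the quasi-isomorphisms of two-term complexes gives a canonical homotopy $h_{m,m'}$; more precisely, the square commutes on the nose at the level of these explicit two-term models, zig-zagging through the model of $E_{m,m'}$.

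The main obstacle is the cocycle condition $h_{m',m''}\circ h_{m,m'}=h_{m,m''}$. The maps involved are not homomorphisms of extensions in a single fixed category, and homotopies are a priori only well defined up to higher data. I would handle this rigidly: all maps of extensions are determined by their ghost components, since Lemma~\ref{lemmaqghostmapinjectiveHabiro} and Corollary~\ref{corollaryinjectiveHabiromandm'} give injectivity into products of $R[q]/\Phi_e(q)^{n_e}$. So I would verify the needed identities there, namely $\tilde{\Pi}_{m''/m'}\circ\tilde{\Pi}_{m'/m}=\tilde{\Pi}_{m''/m}$ after reducing to the appropriate quotient, together with compatibility of the reductions. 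These are the same gcd/lcm exponent identities as in the first paragraph of the proof of Lemma~\ref{lemmanormpreserveimageofqghostmap}. With these strict commutativities, the homotopies assemble into a strictly commuting diagram of two-term complexes indexed by the divisibility poset. This yields the compatible system, and hence a map to $R\lim$, which computes $\mathcal{N}^{\ge1}\CR\{1\}$ since the transition maps are surjective.
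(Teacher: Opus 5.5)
Your proposal is correct and follows essentially the same route as the paper. You reduce via $\Pi_{m'/m}\circ\Pi_m=\Pi_{m'}$, pass through the intermediate extension for the pair $(\mathcal{H}_{R,m'}/(q^m-1)(q^{m'}-1),(q^{m'}-1))$, whose kernel is identified with $\mathcal{N}^{\ge 1}\mathcal{C}_{R,m}\{1\}$ by Lemma~\ref{lemmadivisionbym/e}\,(3), and compare with the map of extensions $(\tfrac{m'}{m},\tilde{\Pi}_{m'/m},\Pi_{m'/m})$ from Proposition~\ref{propositionliftedcyclotomicpower} and Corollary~\ref{corollaryliftedcyclotomicpowercommutativediagram}. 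You then deduce the cocycle condition, as the paper does, from the composition identity for the lifted cyclotomic norms, checked on $q$-ghost coordinates via the injectivity of Corollary~\ref{corollaryinjectiveHabiromandm'}, being slightly more explicit than the paper about the quotient in which that composition makes sense.
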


\begin{proof} 
    Let $m$ and $m'$ be positive integers satisfying $m|m'$. We want to find an homotopy making the diagram
    $$\begin{tikzcd}
        \G_m(R) \arrow[d,"\text{id}"] \arrow[r,"\Pi_{m'}"] & \G_m(\qWmm(R)) \ar[r,"\dlog"] & \mathcal{N}^{\ge 1}\mathcal{C}_{R,m'}\{1\}[1] \arrow[d,"F_{m'/m}\{1\}"] \\
        \G_m(R) \arrow[r,"\Pi_m"] & \G_m(\qWm(R)) \ar[r,"\dlog"] & \mathcal{N}^{\ge 1}\mathcal{C}_{R,m}\{1\}[1]
    \end{tikzcd}$$
    commute in the derived category $\mathcal{D}(\Z)$. 
    By Remark~\ref{remarkcompatibilityupgradedcyclotomicpowers}, the diagram of abelian groups
    $$\begin{tikzcd} 
        \G_m(R) \arrow[d,"\text{id}"] \arrow[r,"\Pi_{m'}"] & \G_m(\qWmm(R)) \\
        \G_m(R) \arrow[r,"\Pi_m"] & \G_m(\qWm(R)) \ar[u,"\Pi_{m'/m}"]
    \end{tikzcd}$$
    is commutative, so it suffices to find an homotopy making the diagram
    $$\begin{tikzcd}
        \G_m(\qWmm(R)) \ar[r,"\dlog"] & \mathcal{N}^{\ge 1}\mathcal{C}_{R,m'}\{1\}[1] \arrow[d,"F_{m'/m}\{1\}"] \\
        \G_m(\qWm(R)) \ar[r,"\dlog"] \ar[u,"\Pi_{m'/m}"] & \mathcal{N}^{\ge 1}\CRm\{1\}[1]
    \end{tikzcd}$$
    commute in the derived category $\mathcal{D}(\Z)$. 
    By Definition~\ref{definitionNygaardtwistedcyclotomicring} and Remark~\ref{remarkNygaardwithHabiro}, the right vertical map of this diagram is naturally identified with (the shift of) the composite
    $$\mathcal{N}^{\ge 1} \mathcal{C}_{R,m'}\{1\} \twoheadrightarrow {\frac{(q^{m'}-1)\mathcal{H}_{R,m'}}{(q^{m}-1)(q^{m'}-1)\mathcal{H}_{R,m'}}} \xlongrightarrow{\cong} \mathcal{N}^{\ge 1} \CRm\{1\}$$
    where the first map is induced by the canonical map
    $$(q^{m'}-1)/(q^{m'}-1)^2 \twoheadrightarrow (q^{m'}-1)/(q^m-1)(q^{m'}-1)$$ of $\Z[q]$-modules, and the second map is induced by the isomorphism of Lemma~\ref{lemmadivisionbym/e}\,(3). 
    Moreover, the natural diagram
    $$\begin{tikzcd}
        \G_m(\qWmm(R)) \ar[r,"\dlog"] \ar[d,"\text{id}"] & \mathcal{N}^{\ge 1} \mathcal{C}_{R,m'}\{1\}[1] \arrow[d,two heads] \\
        \G_m(\qWmm(R)) \ar[r,"\dlog"] & {\frac{(q^{m'}-1)\mathcal{H}_{R,m'}}{(q^{m}-1)(q^{m'}-1)\mathcal{H}_{R,m'}}}[1]
    \end{tikzcd}$$
    in the derived category $\mathcal{D}(\Z)$, where the bottom horizontal map is the derived logarithm of Construction~\ref{constructiondlog} for the pair 
    $(A,I) := (\mathcal{H}_{R,m'}/(q^m-1)(q^{m'}-1),(q^{m'}-1))$, 
    is commutative.
    
    It then suffices to produce an homotopy making the diagram
    $$\begin{tikzcd}
        \G_m(\qWmm(R)) \ar[r,"\dlog"] & {\frac{(q^{m'}-1)\mathcal{H}_{R,m'}}{(q^{m}-1)(q^{m'}-1)\mathcal{H}_{R,m'}}}[1] \\
        \G_m(\qWm(R)) \ar[u,"\Pi_{m'/m}"] \ar[r,"\dlog"] & \mathcal{N}^{\ge 1}\CRm\{1\}[1] \ar[u,"\frac{m'}{m}","\cong"']
    \end{tikzcd}$$
    commute in the derived category $\mathcal{D}(\Z)$. Unwinding the definition of the derived logarithm (Construction~\ref{constructiondlog}), this amounts to finding a morphism of abelian groups $\tilde{\Pi}_{m'/m}$ making the diagram of abelian groups
    $$\begin{tikzcd}
        0 \ar[r] & {\frac{(q^{m'}-1)\mathcal{H}_{R,m'}}{(q^{m}-1)(q^{m'}-1)\mathcal{H}_{R,m'}}} \ar[r,"\exp"] & \G_m\big({\frac{\mathcal{H}_{R,m'}}{(q^{m}-1)(q^{m'}-1)\mathcal{H}_{R,m'}}}\big) \ar[r] & \G_m(\qWmm(R)) \ar[r] & 0 \\
        0 \ar[r] & \mathcal{N}^{\ge 1}\CRm\{1\} \ar[r,"\exp"] \ar[u,"\frac{m'}{m}"] & \G_m\big({\frac{\mathcal{H}_{R,m'}}{(q^{m'}-1)^2\mathcal{H}_{R,m'}}}\big) \ar[r] \ar[u,"\tilde{\Pi}_{m'/m}"] & \G_m(\qWm(R)) \ar[u,"\Pi_{m'/m}"] \ar[r] & 0
    \end{tikzcd}$$
    commute. Such morphisms $\tilde{\Pi}_{m'/m}$ are constructed in Proposition~\ref{propositionliftedcyclotomicpower}. More precisely, these morphisms are well-defined by Proposition~\ref{propositionliftedcyclotomicpower}, the right square is commutative by construction, and the left square is commutative by Corollary~\ref{corollaryliftedcyclotomicpowercommutativediagram}. Finally, the fact that the induced homotopies $h_{m,m'}$ satisfy $h_{m',m''} \circ h_{m,m'} = h_{m,m''}$ is a consequence of the formula $\tilde{\Pi}_{m''/m'} \circ \tilde{\Pi}_{m'/m} = \tilde{\Pi}_{m''/m}$, which can be checked as in the proof of Lemma~\ref{lemmanormpreserveimageofqghostmap}.
\end{proof}

\begin{definition}[Cyclotomic logarithm]\label{definitioncyclotomiclogarithm}
    Let $K$ be a number field, and $R$ be the étale $\Z$-algebra $\nbrrg$. 
    The \emph{cyclotomic logarithm of} $R$ is the map
    $$\dlog_{\Cyc} : \G_m(R)[-1] \longrightarrow \mathcal{N}^{\ge 1} \CR\{1\}$$
    defined as the inverse limit over integers $m \ge 1$ satisfying $(m,N)=1$, in the derived category~$\mathcal{D}(\Z)$, of the $m$-truncated cyclotomic logarithms of Construction~\ref{constructiontruncatedcyclotomiclogarithm}. Note here that we use the compatible homotopies of Proposition~\ref{propositioncompatiblehomotopies} to make sense of this inverse limit.\footnote{More precisely, to write down the limit of the maps $\dlog_{\Cyc}^{(m)}$ in the derived $\infty$-category $\mathcal{D}(\Z)$, we use the $1$\nobreakdash-homotopies $h_{m,m'}$ of Proposition~\ref{propositioncompatiblehomotopies} to encode the fact that the maps $\dlog_{\Cyc}^{(m)}$ are compatible between different integers $m \ge 1$, and the equality $h_{m',m''} \circ h_{m,m'} = h_{m,m''}$, also proved in Proposition~\ref{propositioncompatiblehomotopies}, to ensure that there are no higher coherences to check.}
\end{definition}

\begin{remark}[Comparison with \cite{mao_prismatic_2024}]\label{remarkcomparisontoMao}
    The general strategy to construct the cyclotomic logarithm $\dlog_{\Cyc}$ is inspired by Mao's construction of the refined prismatic logarithm $\dlog_{\Prism}$ \cite[Section~2]{mao_prismatic_2024}. However, note that Mao's $p$-adic constructions are limited to the case of odd prime numbers $p$. The explicit formula for the lifted cyclotomic norms (Proposition~\ref{propositionliftedcyclotomicpower}) allows us to bypass this technical assumption, and in particular not to restrict our constructions to odd integers~$m \ge 1$. 
\end{remark}

\section{Cyclosyntomic regulator}\label{sec:first-polylog}

Let $K$ be a number field. In this section, we define, for each integer $d \ge 2$, the \textit{cyclosyntomic complex}
$$R\Gamma_{\CycSyn}\big(R,\Z(1)^{(d)}\big) \in \mathcal{D}(\Z)$$
of $R:=\nbrrg$ (Definition~\ref{definitioncyclosyntomiccohomology}), its associated first Chern class
$$c_1^{\CycSyn} : R\Gamma_{\text{ét}}(R,\G_m)[-1] \longrightarrow R\Gamma_{\CycSyn}\big(R,\Z(1)^{(d)}\big)$$
as a map in the derived category $\mathcal{D}(\Z)$ (Definition~\ref{definitioncyclosyntomicfirstChernclass}), and prove that this cyclosyntomic first Chern class is computed at cyclotomic units by a $q$-deformation of the first polylogarithm (Theorem~\ref{theoremmainLi1cyclosyntomic}).

\subsection{The cyclosyntomic cohomology of a number field}\label{subsectionthecyclosyntomiccohomologyofanumberfield}

In this subsection, we introduce the cyclosyntomic cohomology of $R:=\mathcal{O}_K[\Delta_K^{-1}]$ (Definition~\ref{definitioncyclosyntomiccohomology}). It is defined as a two term complex given in degree zero by the Nygaard twist $\mathcal{N}^{\ge 1} \CR\{1\}$ (Definition~\ref{definitionNygaardtwistedcyclotomicring}) and in degree one by the cyclotomic twist $\CR^{(d)}\{1\}$, which we define now.

\begin{definition}[Cyclotomic twist at $d$]\label{definitiontwistedcyclotomicringatd}
    Let $R$ be a commutative ring, and $d \ge 1$ be an integer. For every integer $m \ge 1$, the \emph{$m$-truncated cyclotomic twist of $R$ at $d$} is the $\CRm^{(d)}$-module
    $$\CRm^{(d)}\{1\} := \CRm^{(d)} \otimes_{\Z[q]} (q^m-1)/(q^m-1)^2$$
    where the cyclotomic ring $\CRm^{(d)}$ is defined in Variant~\ref{variantcyclotomicringatd}. If $K$ is a number field and $R$ is the étale $\Z$-algebra $\nbrrg$, the \emph{cyclotomic twist of $R$ at $d$} is the $\Z[q]$-module
    $$\CR^{(d)}\{1\} := \lim_{\substack{m \ge 1 \\ (m,\Delta_K)=1}} \CRm^{(d)}\{1\}$$
    where the limit is taken over integers $m \ge 1$ satisfying $(m,N)=1$, and along the maps induced by the Frobenius maps $F_{m'/m}$ on $q$-Witt vectors (Construction~\ref{constructionqfrobeniiandverschiebungen}) and the maps $(\frac{m'}{m})^{-1}$ on the second factor (Lemma~\ref{lemmadivisionbym/e}\,(2)).
\end{definition}

\begin{remark}\label{remarklimitovermNygaardandtwistedcyclotomic}
    Let $R$ be an étale $\Z$-algebra, and $d \ge 2$ be an integer. For every integer $m \ge 1$, there are natural isomorphisms of $\Z[q]$-modules
    $$\CRm^{(d)}\{1\} \cong (q^m-1) \mathcal{H}^{(d)}_{R,m} / (q^m-1)^2\mathcal{H}^{(d)}_{R,m} \cong [m]_q\mathcal{H}^{(d)}_{R,m} / [m]_q^2\mathcal{H}^{(d)}_{R,m}$$
    where $\mathcal{H}_{R,m}^{(d)}$ is the $m$-truncated Habiro ring of $R$ at $d$ (Variant~\ref{variantHabiroringatd}), and where the Frobenius maps on $q$-Witt vectors correspond to the canonical restriction maps on Habiro rings (Remark~\ref{remarkHabiroqWitt}). Note that the first isomorphism also holds for $d=1$ and that, in general, one may replace $(q^m-1)$ and $[m]_q$ by $\prod_{d|e|m}\Phi_e(q)$.
\end{remark}

\begin{remark}
    When $d=p$ is a prime number and $m=p^r$ is a power of $p$, the identification of Remark~\ref{remarklimitovermNygaardandtwistedcyclotomic} in terms of $[m]_q$ is reminiscent of the notion of Breuil--Kisin twist in prismatic cohomology \cite[Section~2.2]{bhatt_absolute_2022}, in the sense that we have the equality of ideals
    $$I\,\varphi_A^\ast(I)\,\cdots\,(\varphi_A^{r-1})^\ast(I) = \big([p]_q [p]_{q^p} \cdots [p]_{q^{p^{r-1}}}\big) = ([p^r]_q)$$
    in the $q$-prism $(A,I) := (\Z_p[\![q-1]\!],[p]_q)$.
\end{remark}

\begin{notation}\label{notationcanonicalmapcyclosyntomic}
    Given a number field $K$, $R:=\mathcal{O}_K[\Delta_K^{-1}]$, and integers $d,m \ge 1$, we denote by
    $$\can : \mathcal{N}^{\ge 1}\CRm\{1\} \longrightarrow \CRm^{(d)}\{1\}$$
    the morphism of $\Z[q]$-modules given by $(\Phi_e(q)c_e(q))_{e|m} \mapsto (\Phi_e(q)c_e(q))_{d|e|m}$. These maps are compatible between different integers $m \ge 1$ by construction, so they induce a morphism of $\Z[q]$\nobreakdash-modules $$\can : \mathcal{N}^{\ge 1} \CR\{1\} \longrightarrow \CR^{(d)}\{1\}.$$
\end{notation}

The following definition is a variant of Construction~\ref{constructionHabiroFrobenii}.

\begin{construction}[Truncated twisted cyclotomic Frobenii]\label{constructiontruncatedtwistedcyclotomicFrobenius}
    Let $R$ be a commutative ring, and $d,m \ge 1$ be integers. The \emph{$m$-truncated $d^{th}$ twisted cyclotomic Frobenius} is the morphism of abelian groups
    $$\Frob_d^{\Cyc}\{1\} : \mathcal{N}^{\ge 1} \CRm\{1\} \longrightarrow \CRm^{(d)}\{1\}$$
    given by $(q^m-1)(c_e(q))_{e|m} \mapsto (q^m-1)(c_{e/d}(q^d))_{d|e|m}$. In terms of Habiro rings, if $R$ is étale over~$\Z$, then this morphism is given by the composite
    $$\frac{(q^m-1)\mathcal{H}_{R,m}}{(q^m-1)^2\mathcal{H}_{R,m}} \xlongrightarrow{\Frob_d^{\Hab}} \frac{(q^{dm}-1)\mathcal{H}^{(d)}_{R,dm}}{(q^{dm}-1)^2\mathcal{H}^{(d)}_{R,dm}} \xlongrightarrow{\frac{1}{d}} \frac{(q^{m}-1)\mathcal{H}^{(d)}_{R,m}}{(q^{m}-1)^2\mathcal{H}^{(d)}_{R,m}}$$
    where the first morphism is induced by the Habiro Frobenius of Construction~\ref{constructionHabiroFrobenii}. By construction, the truncated $d^{th}$ twisted cyclotomic Frobenii are compatible between different integers $m \ge 1$.
\end{construction}

\begin{definition}[Twisted cyclotomic Frobenii]\label{definitiontwistedcyclotomicFrobenius}
    Let $K$ be a number field, $R$ be the étale $\Z$-algebra $\mathcal{O}_K[\Delta_K^{-1}]$, and $d \ge 1$ be an integer. The \emph{$d^{th}$ twisted cyclotomic Frobenius} is the morphism of abelian groups
    $$\Frob_d^{\Cyc}\{1\} : \mathcal{N}^{\ge 1} \CR\{1\} \longrightarrow \CR^{(d)}\{1\}$$
    defined as the inverse limit over integers $m \ge 1$ satisfying $(m,N)=1$ of the $m$-truncated twisted cyclotomic Frobenii of Construction~\ref{constructiontruncatedtwistedcyclotomicFrobenius}. 
\end{definition}

\begin{definition}[Cyclosyntomic cohomology]\label{definitioncyclosyntomiccohomology} 
    Let $K$ be a number field, $R$ be the étale $\Z$-algebra $\mathcal{O}_K[\Delta_K^{-1}]$, $N$ be a multiple of $\Delta_K$, and $d \ge 1$ be an integer. The \emph{cyclosyntomic complex of $R$ at $d$} is the object
    $$R\Gamma_{\CycSyn}(R,\Z(1)^{(d)}) := \left[ \mathcal{N}^{\ge 1} \mathcal{C}_R\{1\} \xlongrightarrow{\can - \Frob^{\Cyc}_d\{1\}} \mathcal{C}_R^{(d)}\{1\} \right]$$
    in the derived category $\mathcal{D}(\Z)$, where the first term $\mathcal{N}^{\ge 1} \CR\{1\}$ (Definition~\ref{definitionNygaardtwistedcyclotomicring}) sits in cohomological degree zero, the second term $\CR^{(d)}\{1\}$ (Definition~\ref{definitiontwistedcyclotomicringatd}) sits in cohomological degree one, and the morphism is defined by Notation~\ref{notationcanonicalmapcyclosyntomic} and Definition~\ref{definitiontwistedcyclotomicFrobenius}.
\end{definition}

\begin{remark}
    Although variants\footnote{For instance, if one does not restrict the limit over $m \ge 1$ to integers satisfying $(m,N)=1$ in Definitions~\ref{definitionNygaardtwistedcyclotomicring} and~\ref{definitiontwistedcyclotomicringatd}.} of the previous definition would make sense for arbitrary commutative rings~$R$, we do not expect it to be a reasonable definition beyond the case of étale $\Z$-algebras $R$. This intuition is directly drawn from the definition of syntomic cohomology in terms of absolute prismatic cohomology \cite{bhatt_prisms_2022,bhatt_absolute_2022}.
\end{remark}

\begin{remark}[Comparison to syntomic cohomology]\label{remarkcomparisontosyntomiccohomology}
    Let $K$ be a number field, $R$ be the étale $\Z$\nobreakdash-algebra $\nbrrg$, and $p$ be a prime number. Here we compare the cyclosyntomic complex of $R$ of Definition~\ref{definitioncyclosyntomiccohomology} with the prismatic approach to syntomic cohomology, as developed in \cite{bhatt_topological_2019,bhatt_prisms_2022,bhatt_absolute_2022,antieau_prismatic_2023}. Following \cite[Section~7]{antieau_prismatic_2023}, the (weight one) syntomic complex of $R$ relative to the $q$-prism $(\Z_p[\![q-1]\!],[p]_q)$ is the object
    $$R\Gamma_{q\text{syn}}(R,\Z_p(1)) := \fib\Big(\mathcal{N}^{\ge 1} \Prism_{R^{(1)}/\Z_p[\![q-1]\!]}\{1\} \xlongrightarrow{\can - \Frob_p^{\Prism}\{1\}} \Prism_{R^{(1)}/\Z_p[\![q-1]\!]}\{1\}\Big)$$
    of the derived category $\mathcal{D}(\Z_p)$, where $R^{(1)} := R \otimes_{\Z_p} \Z_p[\zeta_p]$ and $\Frob_p^{\Prism}\{1\}$ is the divided Frobenius on the first Breuil--Kisin twist of prismatic cohomology. Given that $R=\nbrrg$ is étale over~$\Z$, the prismatic cohomology of $R^{(1)}$ relative to $\Z_p[\![q-1]\!]$ is concentrated in degree zero, where it is given by the initial prism $R^\wedge_p[\![q-1]\!]$ of the prismatic site $(R^{(1)}/\Z_p[\![q-1]\!])_{\Prism}$. Similarly, the Nygaard twist $\mathcal{N}^{\ge 1} \Prism_{R^{(1)}/\Z_p[\![q-1]\!]}\{1\}$ is given by $(q-1)R^{\wedge}_p[\![q-1]\!]$ concentrated in degree zero (\cite[Section~15]{bhatt_prisms_2022}, see also \cite[3.20]{wagner_q-hodge_2025}). 
    Unwinding the definitions, the $p$-adic realisation maps of Remarks~\ref{remarkpadicrealisationcyclotomic} and~\ref{remarkpadicrealisationHabiroandcyclotomicFrobenius} then induce a natural commutative diagram
    $$\begin{tikzcd}[column sep = huge]
        \mathcal{N}^{\ge 1} \CR\{1\} \ar[r,"\can - \Frob_d^{\Cyc}\{1\}"] \ar[d] & \CR^{(p)}\{1\} \ar[d] \\
        (q-1)R^\wedge_p[\![q-1]\!] \ar[r,"\can-\Frob_p^{\Prism}\{1\}"] & R^\wedge_p[\![q-1]\!]
    \end{tikzcd}$$
    in the derived category $\mathcal{D}(\Z)$, where all the terms sit in cohomological degree zero. In particular, this commutative diagram induces a natural comparison map
    $$R\Gamma_{\CycSyn}(R,\Z(1)^{(p)}) \longrightarrow R\Gamma_{q\text{syn}}(R,\Z_p(1))$$
    in the derived category $\mathcal{D}(\Z)$.
\end{remark}

\begin{remark}[Truncated cyclosyntomic cohomology]\label{remarktruncatedcyclosyntomiccohomology}
    Given a number field $K$, $R:=\mathcal{O}_K[\Delta_K^{-1}]$, a multiple $N$ of $\Delta_K$, and an integer $d \ge 1$, the cyclosyntomic complex $R\Gamma_{\CycSyn}(R,\Z(1)^{(d)})$ of Definition~\ref{definitioncyclosyntomiccohomology} is the limit over integers $m \ge 1$ satisfying $(m,N)=1$ of the analogous complexes
    $$R\Gamma_{\CycSyn}(R,\Z(1)^{(d)}_m) := \left[ \mathcal{N}^{\ge 1} \mathcal{C}_{R,m}\{1\} \xlongrightarrow{\can - \Frob^{\Cyc}_d\{1\}} \mathcal{C}_{R,m}^{(d)}\{1\} \right]$$
    in the derived category $\mathcal{D}(\Z)$, where the morphism is given by $$\can - \Frob_d^{\Cyc}\{1\} : (\Phi_e(q)c_e(q))_{e|m} \mapsto (\Phi_e(q)c_e(q) - \Phi_e(q)c_{d/e}(q^d))_{d|e|m}.$$ This is indeed a consequence of Remarks~\ref{remarklimitovermNygaardandtwistedcyclotomic}, Notation~\ref{notationcanonicalmapcyclosyntomic}, and Construction~\ref{constructiontruncatedtwistedcyclotomicFrobenius}.
\end{remark}

\subsection{The first Chern class}\label{subsec:the-first-chern-class}

In this subsection, we define the cyclosyntomic first Chern class (Definition~\ref{definitioncyclosyntomicfirstChernclass}) as a refinement of the cyclotomic logarithm $\dlog_{\Cyc} :\mathbb{G}_m(R)[-1]\to \mathcal{N}^{\ge 1} \mathcal{C}_{R}\{1\}$
of Section~\ref{subsectioncyclotomiclogarithm}. More precisely, we prove that the cyclotomic logarithm factors through the cyclosyntomic complex defined in Section~\ref{subsectionthecyclosyntomiccohomologyofanumberfield}, by exhibiting suitable null-homotopies at the truncated level (Construction~\ref{constructionhomotopyforfirstChernclass} and Proposition~\ref{propositionfirstChernclasszero}).

\begin{notation}\label{notationextensionclassEm}
    Given an étale $\Z$-algebra $R$ and an integer $m \ge 1$, we denote by
    $$E_m := \big\{(y,z) \in \G_m(\mathcal{H}_{R,m}/(q^m-1)^2) \times \G_m(R)~\big \vert~\overline{y} = \Pi_m(z) \text{ in } \mathcal{H}_{R,m}/(q^m-1) \cong \qWm(R)\big\}$$
    the abelian group that corresponds to the extension class
    $$0 \longrightarrow \mathcal{N}^{\ge 1} \CRm\{1\} \xlongrightarrow{\exp \times 1} E_m \longrightarrow \G_m(R) \longrightarrow 0$$
    given by the $m$-truncated cyclotomic logarithm $\dlog_{\Cyc}^{(m)}$ of Construction~\ref{constructiontruncatedcyclotomiclogarithm}. Here we use the isomorphism
    $$\mathcal{N}^{\ge 1} \CRm\{1\}\cong (q^m-1)\mathcal{H}_{R,m}/(q^m-1)^2\mathcal{H}_{R,m}$$ of Remark~\ref{remarkNygaardwithHabiro} to make sense of this short exact sequence.
\end{notation}

The following construction will be used to define the aforementioned null-homotopy (Proposition~\ref{propositionfirstChernclasszero}) and to give the formula presented in the introduction for the induced cyclosyntomic first Chern class (Corollary~\ref{corollaryformulaforfirstChernclass}).

\begin{construction}\label{constructionhomotopyforfirstChernclass}
    Let $R$ be an étale $\Z$-algebra, and $d \ge 2$ and $m \ge 1$ be integers. We construct a morphism of abelian groups
    $$s_d : E_m \longrightarrow \CRm^{(d)}\{1\}$$
    where $E_m$ is defined in Notation~\ref{notationextensionclassEm}. 
    This morphism is given by the expression
    $$s_d : (y,z) \longmapsto \frac{1}{d} \log\Big(\frac{\tilde{\Pi}_d(y)}{\Frob^{\Hab}_d(y)}\Big)$$
    where the formula is understood via the following conventions: 
    \begin{itemize}
        \item $\tilde{\Pi}_d := \can\circ \tilde{\Pi}_d$ is the composite map 
         $$ \G_m\Big(\frac{\mathcal{H}_{R,m}}{(q^m-1)^2}\Big) \xlongrightarrow{\tilde{\Pi}_d } \G_m\Big(\frac{\mathcal{H}_{R,dm}}{(q^m-1)(q^{dm}-1)}\Big) \xlongrightarrow{\can} \G_m\Big(\frac{\mathcal{H}_{R,dm}^{(d)}}{(q^m-1)(q^{dm}-1)}\Big)$$
         where the first map $\tilde{\Pi}_d$ is the lifted cyclotomic norm of Proposition~\ref{propositionliftedcyclotomicpower}; 
         \item $\Frob_d^{\Hab}$ is the composite map
         $$\G_m\Big(\frac{\mathcal{H}_{R,m}}{(q^m-1)^2}\Big) \xlongrightarrow{\Frob_d^{\Hab}} \G_m\Big(\frac{\mathcal{H}_{R,dm}^{(d)}}{(q^{dm}-1)^2}\Big) \relbar\joinrel\twoheadrightarrow \G_m\Big(\frac{\mathcal{H}_{R,dm}^{(d)}}{(q^m-1)(q^{dm}-1)}\Big)$$
         where the first map $\Frob_d^{\Hab}$ is the Habiro Frobenius of Construction~\ref{constructionHabiroFrobenii}; 
    \item the logarithm is the morphism of abelian groups
    $$\log : \G_m\Big(\frac{\mathcal{H}_{R,dm}^{(d)}}{(q^m-1)(q^{dm}-1)}\Big) \longrightarrow \frac{\mathcal{H}_{R,dm}^{(d)}}{(q^m-1)(q^{dm}-1)}, \quad t\longmapsto t-1;$$
    \item ``division by $d$'' is understood as the morphism of abelian groups
    $$d^{-1} =[d]_{q^m}^{-1} : \frac{[dm]_q \mathcal{H}_{R,dm}^{(d)}}{[m]_q[dm]_q\mathcal{H}_{R,dm}^{(d)}} \xlongrightarrow{\cong} \frac{[m]_q \mathcal{H}_{R,dm}^{(d)}}{[m]_q^2\mathcal{H}_{R,dm}^{(d)}} \cong \CRm^{(d)}\{1\}$$
    given by division by $[d]_{q^m} = [dm]_q/[m]_q$ (Lemma~\ref{lemmadivisionbym/e}\,(3)).
    \end{itemize}
    To check that the morphism $s_d$ is well-defined, it suffices to prove, for every $(y,z)\in E_m$, that the element
    $$\frac{\tilde{\Pi}_d(y)}{\Frob_d^{\Hab}(y)}-1\in \mathcal{H}_{R,dm}^{(d)}/[m]_q[dm]_q \mathcal{H}_{R,dm}^{(d)}$$ belongs to the ideal $([dm]_q)$, or equivalently that $\tilde{\Pi}_d(y) \equiv \Frob_d^{\Hab}(y)$ in $\mathcal{H}_{R,dm}^{(d)}/[dm]_q$. We now use Lemma~\ref{lemmaqghostmapinjectiveHabiro}, whose proof adapts readily from the Habiro ring $\mathcal{H}_{R,m}$ (Variant~\ref{varianttruncatedHabiroring}) to the variant $\mathcal{H}_{R,m}^{(d)}$ (Variant~\ref{variantHabiroringatd}), in order to reduce this statement to a statement on $q$-ghost coordinates.  
    More precisely, it then suffices to prove this identity on the $e^{th}$ $q$-ghost coordinate for integers $e \ge 1$ satisfying $d|e|dm$.\footnote{Note that this is the critical step where we need $\Phi_e(q)$ to be invertible in $\mathcal{H}_{R,dm}^{(d)}$ for $d \nmid e$, and the reason for taking this convention in Variant~\ref{variantcyclotomicringatd}, Variant~\ref{variantHabiroringatd}, and Definition~\ref{definitiontwistedcyclotomicringatd}.} 
    For every such integer $e$, the desired identity follows from the series of equalities in~$R[q]/\Phi_e(q)$:
    $$\tilde{\Pi}_d(y)_e = \Pi_d(y)_e= \Pi_d(\Pi_m(z))_e = \Pi_{dm}(z)_e = z^{\frac{dm}{e}} = z^{\frac{m}{e/d}} = \Frob_d^{\Hab}(\Pi_m(z))_e = \Frob_d^{\Hab}(y)_e$$
    where we use that $y\equiv \Pi_m(z)\pmod{\Phi_e(q)}$ by definition of $(y,z) \in E_m$ (Notation~\ref{notationextensionclassEm}).
\end{construction}

\begin{proposition}\label{propositionfirstChernclasszero}
    Let $R$ be an étale $\Z$-algebra, and $d \geq 2$ be an integer. Then for every integer $m \ge 1$, the map $$s_d : E_m \longrightarrow \CRm^{(d)}\{1\}$$ 
    of Construction~\ref{constructionhomotopyforfirstChernclass} induces a natural homotopy $h'_m$ making the composite
    $$\G_m(R)[-1] \xlongrightarrow{\dlog_\Cyc^{(m)}} \mathcal{N}^{\ge 1} \mathcal{C}_{R,m}\{1\} \xlongrightarrow{\emph{can} - \Frob_d^{\Cyc}\{1\}} \mathcal{C}^{(d)}_{R,m}\{1\}$$
    homotopic to zero in the derived category $\mathcal{D}(\Z)$.
\end{proposition}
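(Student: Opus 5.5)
We will work with the explicit extension $E_m$ of Notation~\ref{notationextensionclassEm}, which represents $\dlog_{\Cyc}^{(m)}$. Concretely, $\dlog_{\Cyc}^{(m)}$ is the boundary map of the short exact sequence
$$0 \to \mathcal{N}^{\ge 1}\CRm\{1\} \xrightarrow{\exp\times 1} E_m \to \G_m(R) \to 0.$$
This is because $E_m$ is the pullback along $\Pi_m : \G_m(R)\to \G_m(\qWm(R))$ of the extension defining $\dlog$ for the pair $(\mathcal{H}_{R,m},(q^m-1))$ (Constructions~\ref{constructiondlog} and~\ref{constructiontruncatedcyclotomiclogarithm}). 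A null-homotopy of the composite $(\can-\Frob_d^{\Cyc}\{1\})\circ \dlog_{\Cyc}^{(m)}$ in $\mathcal{D}(\Z)$ is therefore equivalent to a morphism of abelian groups $s : E_m \to \CRm^{(d)}\{1\}$ whose restriction along $\exp\times 1$ equals $\can-\Frob_d^{\Cyc}\{1\}$. Indeed, pushing the extension forward along $\can - \Frob_d^{\Cyc}\{1\}$ gives the composite class, and such an $s$ is exactly a splitting of the pushout. So we will take $s := s_d$ from Construction~\ref{constructionhomotopyforfirstChernclass}, which is already known to be a well-defined morphism of abelian groups; multiplicativity of $\tilde{\Pi}_d$ and $\Frob_d^{\Hab}$ and additivity of $\log$ on $1+([dm]_q)$ modulo $[m]_q[dm]_q$ give the homomorphism property. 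It remains to check the restriction identity.

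Let $x = (q^m-1)c \in (q^m-1)\mathcal{H}_{R,m}/(q^m-1)^2$, so that $(\exp x, 1) = (1+x,1)\in E_m$. We will compute both terms of $s_d(1+x,1)$.

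First, by Corollary~\ref{corollaryliftedcyclotomicpowercommutativediagram} with $m' = dm$, we have $\tilde{\Pi}_d(1+x) = \exp(d\cdot x)$ in $\mathcal{H}_{R,dm}/(q^m-1)(q^{dm}-1)$. Here $d\cdot x$ denotes the image under the isomorphism $\frac{m'}{m}$ of Lemma~\ref{lemmadivisionbym/e}\,(3). Applying $\can$ and $\frac1d\log$, this term contributes exactly $\can(x)$: dividing by $[d]_{q^m}$ inverts the identification of Lemma~\ref{lemmadivisionbym/e}, and the resulting map agrees with Notation~\ref{notationcanonicalmapcyclosyntomic} on $q$-ghost coordinates $e$ with $d|e|m$.

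Second, $\Frob_d^{\Hab}(1+x) = 1 + \Frob_d^{\Hab}(x)$, where $\Frob_d^{\Hab}(x)\in (q^{dm}-1)\mathcal{H}^{(d)}_{R,dm}$. After reducing modulo $(q^m-1)(q^{dm}-1)$ and applying $\frac1d$, this is by definition $\Frob_d^{\Cyc}\{1\}(x)$ (Construction~\ref{constructiontruncatedtwistedcyclotomicFrobenius}). Since $\log$ is additive on the square-zero ideal, the quotient satisfies
$$\frac{\tilde{\Pi}_d(1+x)}{\Frob^{\Hab}_d(1+x)} = 1 + d\,x - \Frob_d^{\Hab}(x).$$
Hence $s_d(1+x,1) = (\can - \Frob_d^{\Cyc}\{1\})(x)$.

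The main obstacle is the bookkeeping of the division by $d$. Two things must be checked.
\begin{itemize}
\item The quotient $(q^{dm}-1)\mathcal{H}^{(d)}_{R,dm}/(q^m-1)(q^{dm}-1)$ agrees with $\CRm^{(d)}\{1\}$ under $[d]_{q^m}^{-1}$. This requires passing from $\mathcal{H}^{(d)}_{R,dm}$ to $\mathcal{H}^{(d)}_{R,m}$ modulo $(q^m-1)^2$, and uses the inverted $\Phi_e$ for $d\nmid e$.
\item This identification matches the one implicit in Construction~\ref{constructiontruncatedtwistedcyclotomicFrobenius}.
\end{itemize}
I would verify both identifications on $q$-ghost coordinates $e$ with $d|e|dm$, using the injectivity of Lemma~\ref{lemmaqghostmapinjectiveHabiro} adapted to $\mathcal{H}^{(d)}$. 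On each such coordinate all maps are explicit: raising to a power, substituting $q\mapsto q^d$, and multiplying or dividing by $[d]_{q^m}\equiv d$. Naturality of $h'_m$ in $R$ follows because every ingredient of $s_d$ is functorial.
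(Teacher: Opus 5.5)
Your proposal is correct and follows essentially the same route as the paper. You reduce the null-homotopy to a splitting of the pushed-out extension $E_m$, take $s_d$, and check that $s_d\circ(\exp\times 1)=\can-\Frob_d^{\Cyc}\{1\}$, using Corollary~\ref{corollaryliftedcyclotomicpowercommutativediagram} for the $\tilde{\Pi}_d$ term, the fact that $(q^{dm}-1)$ is square-zero modulo $(q^m-1)(q^{dm}-1)$, and Construction~\ref{constructiontruncatedtwistedcyclotomicFrobenius} for the Frobenius term (the division-by-$d$ bookkeeping you flag is already absorbed into Constructions~\ref{constructionhomotopyforfirstChernclass} and~\ref{constructiontruncatedtwistedcyclotomicFrobenius}).
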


\begin{proof}
    This composite map corresponds to an extension in $\text{Ext}^1_{\Z}(\G_m(R),\CRm^{(d)}\{1\})$, given by the pushout of the extension $[\dlog_{\Cyc}^{(m)}]\in  \text{Ext}^1_{\Z}(\G_m(R),\mathcal{N}^{\ge 1} \mathcal{C}_{R,m}\{1\})$ of Construction~\ref{constructiontruncatedcyclotomiclogarithm} along the map $\text{can} - \Frob_d^{\Cyc}\{1\}$. 
    Unwinding the definition of the map $\dlog_{\Cyc}^{(m)}$, a splitting of this extension is in turn equivalent to a morphism of abelian groups
    $$s_d : E_m \longrightarrow \CRm^{(d)}\{1\}$$
    such that the diagram of abelian groups
    $$\begin{tikzcd}[column sep = large]
        \mathcal{N}^{\ge 1} \CRm\{1\} \ar[d,"\can - \Frob_d^{\Cyc}\{1\}"'] \ar[r,"\exp \times 1"] & E_m \ar[ld,"s_d"] \\
        \CRm^{(d)}\{1\} &
    \end{tikzcd}$$
    is commutative.  
    We claim that the morphism $s_d$ of Construction~\ref{constructionhomotopyforfirstChernclass} satisfies this property. To prove this, let $x$ be an element of $\mathcal{N}^{\ge 1}\CRm\{1\}$. We then have the series of equality
    \begin{align*}
    \frac{1}{d}\log\Big(\frac{\tilde{\Pi}_d(\exp(x))}{\Frob_d^{\Hab}(\exp(x))}\Big) &= \frac{1}{d} \Big(\frac{1+d\,\can(x)}{1+\Frob_d^{\Hab}(x)} - 1\Big) \\
    &= \frac{1}{d} \big((1+d\,\can(x))(1-\Frob_d^{\Hab}(x)) - 1 \big) \\
    &= \frac{1}{d}\big(d\,\can(x) - \Frob_d^{\Hab}(x)\big) \\
    &= \can(x) - \Frob_d^{\Cyc}\{1\}(x)
    \end{align*}
    where the first equality is a consequence of Corollary~\ref{corollaryliftedcyclotomicpowercommutativediagram}, the second and third equalities are consequences of the fact that $d\,\can(x)$ and $\Frob_d^{\Hab}(x)$ are divisible by $(q^{dm}-1)$, and the last equality is a consequence of Construction~\ref{constructiontruncatedtwistedcyclotomicFrobenius}. 
\end{proof}

\begin{construction}[Truncated cyclosyntomic first Chern class]\label{constructiontruncatedcyclosyntomicfirstChernclass}
    Let $R$ be an étale $\Z$-algebra, and $d \ge 2$ and $m \ge 1$ be integers. By Proposition~\ref{propositionfirstChernclasszero}, the cyclotomic logarithm 
    $$\dlog^{(m)}_{\Cyc} : \G_m(R)[-1] \longrightarrow \mathcal{N}^{\ge 1} \mathcal{C}_{R,m}\{1\}$$
    factors uniquely, in the derived category $\mathcal{D}(\Z)$, through the homotopy fibre of the map
    $$\can - \Frob_d^{\Cyc}\{1\} : \mathcal{N}^{\ge 1} \mathcal{C}_{R,m}\{1\} \longrightarrow \CRm^{(d)}\{1\}.$$
    We denote by
    $$c_1^{\CycSyn} : \G_m(R)[-1] \longrightarrow R\Gamma_{\CycSyn}(R,\Z(1)^{(d)}_m)$$
    the induced map in the derived category $\mathcal{D}(\Z)$, where the target is defined in Remark~\ref{remarktruncatedcyclosyntomiccohomology}.
\end{construction}

\begin{remark}\label{remarklimitovermfirstChernclass}
    The homotopies $h'_m$ of Proposition~\ref{propositionfirstChernclasszero} are compatible between different integers $m \ge 1$. To prove this, it indeed suffices to prove that the diagram of abelian groups
    $$\begin{tikzcd}[column sep = large]
        E_{m'} \arrow[r,"h_{m,m'}"] \arrow[d,"s_d"'] & E_m \arrow[d,"s_d"] \\
    \mathcal{C}^{(d)}_{R,m'}\{1\} \arrow[r,"F_{m'/m}\{1\}"] & \mathcal{C}^{(d)}_{R,m}\{1\}
    \end{tikzcd}$$
    is commutative for all integers $m,m' \ge 1$ satisfying $m|m'$, and this is a consequence of the compatibility between Construction~\ref{constructionhomotopyforfirstChernclass} and the homotopies $h_{m,m'}$ of Proposition~\ref{propositioncompatiblehomotopies}.
\end{remark}

\begin{remark}[\'Etale descent]\label{remarketaledescent}
    By \cite[Proposition~2.15]{wagner_q-witt_2024}, any descent property satisfied by the identity functor on commutative rings is also satisfied by the functor $R \mapsto \W_m(R)$ for every integer $m \ge 1$. Unwinding the definitions, this in particular implies that the cyclosyntomic complex of étale $\Z$\nobreakdash-algebras $R$ (Definition~\ref{definitioncyclosyntomiccohomology} and Remark~\ref{remarktruncatedcyclosyntomiccohomology}) satisfies étale descent on $R$, and the cyclosyntomic first Chern class of Construction~\ref{constructiontruncatedcyclosyntomicfirstChernclass} then naturally factors as a map
    $$c_1^{\CycSyn} : R\Gamma_{\text{ét}}(R,\G_m)[-1] \longrightarrow R\Gamma_{\CycSyn}(R,\Z(1)^{(d)}_m)$$
    in the derived category $\mathcal{D}(\Z)$, for any integers $d \ge 2$ and $m \ge 1$. Note here that we use the fact that the higher coherent cohomology of affine schemes vanishes.
\end{remark}

\begin{definition}[Cyclosyntomic first Chern class]\label{definitioncyclosyntomicfirstChernclass}
    Let $K$ be a number field, $R$ be the étale $\Z$\nobreakdash-algebra $\mathcal{O}_K[\Delta_K^{-1}]$, $N$ be a multiple of $\Delta_K$, and $d \ge 2$ be an integer. The \emph{cyclosyntomic first Chern class of $R$ at $d$} is the map
    $$c_1^{\CycSyn} : \G_m(R)[-1] \longrightarrow R\Gamma_{\CycSyn}(R,\Z(1)^{(d)})$$
    in the derived category $\mathcal{D}(\Z)$, defined as the inverse limit over integers $m \ge 1$ satisfying $(m,N)=1$ of the $m$-truncated maps of Construction~\ref{constructiontruncatedcyclosyntomicfirstChernclass}. Note here that we use Remark~\ref{remarklimitovermfirstChernclass} to make sense of this inverse limit.
\end{definition}

The following result is a global analogue of the fundamental $p$-adic computation of Kato (\cite[Corollary~2.9]{kato_explicit_1991}, see also \cite[Proposition~4.1 and Appendix]{gros_regulateurs_1990} or \cite[page 289]{somekawa_log-syntomic_1999}).

\begin{corollary}\label{corollaryformulaforfirstChernclass}
    Let $K$ be a number field, $R$ be the étale $\Z$-algebra $\mathcal{O}_K[\Delta_K^{-1}]$, and $d \ge 2$ be an integer. For every unit $u \in \G_m(R)$, the cyclosyntomic first Chern class at $u$ is given by
    $$c_1^{\CycSyn} : u \longmapsto \Big(\frac{1}{d}\log\Big(\frac{\tilde{\Pi}_d(\Pi_m^{\Hab}(u))}{\Frob_d^{\Hab}(\Pi_m^{\Hab}(u))}\Big)\Big)_{m \ge 1} \in \mathrm{H}^1_{\CycSyn}(R,\Z(1)^{(d)})$$
    where $\Pi_m^{\Hab}(u) \in \mathcal{H}_{R,m}/(q^m-1)^2$ is any lift of the element $\Pi_m(u) \in \qWm(R)$ via the surjective map of commutative $\Z[q]$-algebras $\mathcal{H}_{R,m}/(q^m-1)^2 \twoheadrightarrow \qWm(R)$ (Remark~\ref{remarkHabiroqWitt}), $\tilde{\Pi}_d$ is the lifted cyclotomic norm (Proposition~\ref{propositionliftedcyclotomicpower}), and $\Frob_d^{\Hab}$ is the Habiro Frobenius (Construction~\ref{constructionHabiroFrobenii}).
\end{corollary}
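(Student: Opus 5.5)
The plan is to unwind the construction of $c_1^{\CycSyn}$ at each truncated level $m$, and then pass to the limit using Remark~\ref{remarklimitovermfirstChernclass}.

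First, for a fixed $m$, recall how the cohomology class is extracted. The map $\dlog_{\Cyc}^{(m)} : \G_m(R)[-1] \rightarrow \mathcal{N}^{\ge 1}\CRm\{1\}$ is the extension $E_m$ of Notation~\ref{notationextensionclassEm}. Its factorisation through the fibre of $f := \can - \Frob_d^{\Cyc}\{1\}$ (Construction~\ref{constructiontruncatedcyclosyntomicfirstChernclass}) is given by the null-homotopy $s_d : E_m \rightarrow \CRm^{(d)}\{1\}$ of Proposition~\ref{propositionfirstChernclasszero}. The map induced on $\mathrm{H}^0$ of $\G_m(R)[-1]$, namely $\G_m(R) \rightarrow \mathrm{H}^1_{\CycSyn}(R,\Z(1)^{(d)}_m) = \mathrm{coker}(f)$, can be computed by a standard diagram chase:
\begin{itemize}
\item choose a set-theoretic section $u \mapsto (y,u) \in E_m$;
\item the image of $u$ is then the class of $s_d(y,u)$ modulo $\mathrm{im}(f)$.
\end{itemize}
Independence of the choice of $y$ holds because $s_d\circ(\exp\times 1)=f$. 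I would verify this description concretely by modelling the fibre as the two-term complex and representing the map $\G_m(R)[-1] \rightarrow [\mathcal{N}^{\ge 1}\CRm\{1\} \rightarrow \CRm^{(d)}\{1\}]$ via the quasi-isomorphism $\G_m(R)[-1] \simeq [\mathcal{N}^{\ge 1}\CRm\{1\} \rightarrow E_m]$. The map of complexes is then $(\mathrm{id}, s_d)$, and on $\mathrm{H}^1$ it sends the class of $(y,u)$ to $s_d(y,u)$.

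Second, a choice of element $(y,u) \in E_m$ is exactly the datum of a lift $y = \Pi_m^{\Hab}(u)$ of $\Pi_m(u)$ along the surjection $\mathcal{H}_{R,m}/(q^m-1)^2 \twoheadrightarrow \qWm(R)$. Surjectivity holds since the kernel is nilpotent, and any lift of a unit is a unit. Plugging this into the explicit formula for $s_d$ in Construction~\ref{constructionhomotopyforfirstChernclass} gives the displayed expression at level $m$.

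Third, I would pass to the limit over $m$ coprime to $N$. Since $\mathcal{N}^{\ge 1}\CR\{1\}$ and $\CR^{(d)}\{1\}$ are defined as limits, $R\Gamma_{\CycSyn}$ is the derived limit of the truncated complexes (Remark~\ref{remarktruncatedcyclosyntomiccohomology}). The induced map to $\mathrm{H}^1$ therefore factors through $\lim_m \mathrm{H}^1$, possibly up to a $\lim^1$ term, and there the element is the compatible family computed above.

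This last step is the expected main obstacle: making sure the family is well-defined independently of the chosen lifts at each $m$, and that it defines a genuine element rather than only a family of classes. I would handle it using the compatible homotopies $h_{m,m'}$ of Proposition~\ref{propositioncompatiblehomotopies} together with Remark~\ref{remarklimitovermfirstChernclass}. These give the square $F_{m'/m}\{1\}\circ s_d = s_d \circ h_{m,m'}$, where $h_{m,m'}(y',u)=(\tilde{\Pi}$-projected$\ y', u)$ sends a lift at level $m'$ to a lift at level $m$. Changing lifts alters each component only by an element of $\mathrm{im}(f)$, compatibly in $m$. The resulting sequence is therefore a well-defined class in $\mathrm{H}^1_{\CycSyn}(R,\Z(1)^{(d)})$, and it equals $c_1^{\CycSyn}(u)$ for any choice of lifts, as claimed.
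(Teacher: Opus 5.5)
Your route is the paper's: its proof just reduces to the truncated level and reads the formula off the null-homotopy $s_d$ of Construction~\ref{constructionhomotopyforfirstChernclass}. Your first two steps do exactly this, and the model $\G_m(R)[-1]\simeq[\mathcal{N}^{\ge 1}\CRm\{1\}\to E_m]$ with the chain map $(\mathrm{id},s_d)$ is a correct way to make it precise.

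\textbf{The gap is in your last step.} If the lifts $y_m$ are chosen independently for each $m$, the family $(s_d(y_m,u))_m$ is in general not an element of $\CR^{(d)}\{1\}=\lim_m\CRm^{(d)}\{1\}$. By Remark~\ref{remarklimitovermfirstChernclass}, $F_{m'/m}\{1\}(s_d(y_{m'},u))=s_d(h_{m,m'}(y_{m'},u))$. Replacing $y_m$ by $y_m\exp(x)$ changes $s_d(y_m,u)$ by $(\can-\Frob_d^{\Cyc}\{1\})(x)$, which is typically nonzero. So your claim that lift changes alter the components by elements of $\mathrm{im}(f)$ ``compatibly in $m$'' is unjustified: the corrections $x_m$ need not assemble to an element of $\lim_m\mathcal{N}^{\ge1}\CRm\{1\}$. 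Passing to $\lim_m\mathrm{H}^1_{\CycSyn}(R,\Z(1)^{(d)}_m)$ does not rescue this. The map from $\mathrm{H}^1_{\CycSyn}(R,\Z(1)^{(d)})$ to that limit has kernel $\lim^1_m\mathrm{H}^0_{\CycSyn}(R,\Z(1)^{(d)}_m)$, so the image there does not determine the class. Note also that $h_{m,m'}(y',u)$ is the unique $(y,u)\in E_m$ with $\tilde{\Pi}_{m'/m}(y)\equiv y'$ modulo $(q^m-1)(q^{m'}-1)$. It is not a naive projection of $y'$; that would not even respect the Teichm\"uller condition, since $F_{m'/m}(\Pi_{m'}(u))=\Pi_m(u)^{m'/m}$.

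\textbf{How to repair it.} Choose the lifts compatibly, i.e.\ pick $((y_m,u))_m\in\lim_m E_m$ lying over $u$.
\begin{itemize}
\item Such a choice exists: the transition maps on $\mathcal{N}^{\ge1}\CRm\{1\}$ are surjective, so the sequence $0\to\mathcal{N}^{\ge1}\CR\{1\}\to\lim_m E_m\to\G_m(R)\to0$ is exact.
\item Then $\G_m(R)[-1]\simeq[\mathcal{N}^{\ge1}\CR\{1\}\to\lim_m E_m]$, and $\lim_m s_d$ is well defined by Remark~\ref{remarklimitovermfirstChernclass}.
\item The chain map $(\mathrm{id},\lim_m s_d)$ models $c_1^{\CycSyn}$.
\item Two compatible choices differ by $\exp(x)$ with $x\in\mathcal{N}^{\ge1}\CR\{1\}$, so the class changes by an element of $\mathrm{im}(\can-\Frob_d^{\Cyc}\{1\})$.
\end{itemize}
The phrase ``any lift'' in the statement should therefore be read as ``any compatible system of lifts''. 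The paper's one-line proof glosses over this point as well.
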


\begin{proof}
    By Definition~\ref{definitioncyclosyntomicfirstChernclass}, it suffices to prove the result at the truncated level, where this is a consequence of Constructions~\ref{constructionhomotopyforfirstChernclass} and~\ref{constructiontruncatedcyclosyntomicfirstChernclass}.
\end{proof}

\subsection{The first $q$-polylogarithm}\label{subsectionthefirstqpolylogarithm}

In this subsection, we compute the cyclosyntomic first Chern class of the previous subsection at cyclotomic units in terms of a $q$-deformation of the first polylogarithm (Theorem~\ref{theoremmainLi1cyclosyntomic}). To do so, we first introduce the relevant $q$-deformation of the first polylogarithm, whose definition is motivated by Deligne's notion of $p$-adic polylogarithm (\cite[3.2.3]{deligne_groupe_1989}).

\begin{construction}\label{constructionfirstpolylogarithmformal}
    Let $d \ge 2$ be an integer, and consider the power series
    $$\Li_1^{(d)}(T)_q := \sum_{\substack{k \ge 1 \\ d \nmid k}} \frac{T^k}{[k]_q} \in \Z[q]\left[\frac{1}{[k]_q}~\bigg|~\substack{k \ge 1 \\ d \nmid k}\right][\![T]\!].$$
    Given any integer $m \ge 1$, as a consequence of the identity $[k+m]_q = [k]_q + q^k[m]_q$, the reduction modulo $[m]_q$ of this powers series is
    $$\Li_1^{(d)}(T)_q \equiv \sum_{a \ge 0} \sum_{\substack{0 < k < m \\ d \nmid k}} \frac{T^{k+am}}{[k]_q} = \Big(\sum_{a \ge 0} T^{am}\Big) \Big( \sum_{\substack{0 < k < m \\ d \nmid k}} \frac{T^k}{[k]_q} \Big) = \frac{1}{1-T^m} \sum_{\substack{0 < k < m \\ d \nmid k}} \frac{T^k}{[k]_q}.$$
    In particular, the $m$-truncated first $q$-polylogarithms
    $$\Li_1^{(d)}(T)_q := \frac{1}{1-T^m} \sum_{\substack{0 < k < m \\ d \nmid k}} \frac{T^k}{[k]_q} \in \mathbb{Z}[q,T]\left[\frac{1}{1-T^m},\frac{1}{[k]_q}~\bigg|~\substack{0<k<m \\ d\nmid k}\right]/[m]_q$$
    are compatible between different integers $m \ge 1$.
\end{construction}

\begin{notation}\label{notationclassofzeta}
    Let $K$ be a number field, $R$ be the étale $\Z$-algebra $\mathcal{O}_K[\Delta_K^{-1}]$, 
    and $\zeta \in R$ be a root of unity. For every integer $m \ge 1$ which is coprime to $\Delta_K$, we denote by
    $$[\zeta] := (\zeta^{\frac{1}{e}})_{e|m} \in \mathcal{H}_{R,m}$$
    the class associated to $\zeta$ in the Habiro ring $\mathcal{H}_{R,m}$ (Variant~\ref{varianttruncatedHabiroring}),
    and similarly for the induced element in any $\mathcal{H}_{R,m}$-algebra (\emph{e.g.}, $[\zeta] \in \mathcal{H}_{R,m}^{(d)}$ for any integer $d \ge 1$). Note that this class $[\zeta]$ is well-defined in $\mathcal{H}_{R,m}$ because for any integer $m \ge 1$ which is coprime to $\Delta_K$, $\zeta$ admits an $m^{th}$ root $\zeta^{\frac{1}{m}}$ in $R$, and for any integer $e \ge 1$ and prime number $p$ such that $pe|m$, the Frobenius map $\varphi_{R^\wedge_p} : R^\wedge_p \rightarrow R^\wedge_p$ sends $\zeta^{\frac{1}{pe}}$ to $\zeta^{\frac{1}{e}}$.
\end{notation}

\begin{definition}[First $q$-polylogarithm class]\label{definitionpolylogarithmcyclosyntomic}
    Let $K$ be a number field, $R$ be the étale $\Z$-algebra $\mathcal{O}_K[\Delta_K^{-1}]$, $d \ge 2$ and $m \ge 1$ be integers, and $\zeta \in R \setminus\{\pm 1\}$ be a root of unity that admits an $m^{th}$ root $\zeta^{\frac{1}{m}}$ in $R$. The \emph{$m$-truncated first $q$-polylogarithm class of $\zeta$} is the element
    $$\Li_1^{(d)}([\zeta])_q := \Big( \frac{1}{1-[\zeta]^m} \sum_{\substack{0 < k < m \\ d \nmid k}} \frac{[\zeta]^k}{[k]_q}\Big) \cdot [m]_q \in \CRm^{(d)}\{1\}$$
    where $\CRm^{(d)}\{1\}$ is defined in Definition~\ref{definitiontwistedcyclotomicringatd}, $[\zeta] \in \mathcal{H}_{R,m}^{(d)}$ is defined in Notation~\ref{notationclassofzeta}, 
    and where we use the isomorphism of $\Z[q]$-modules $\CRm^{(d)}\{1\} \cong [m]_q\mathcal{H}_{R,m}^{(d)}/[m]_q^2\mathcal{H}_{R,m}^{(d)}$ of Remark~\ref{remarklimitovermNygaardandtwistedcyclotomic}.\footnote{Note here that to evaluate the first $q$-polylogarithm of Construction~\ref{constructionfirstpolylogarithmformal} in the $\Z[q]$-module $\mathcal{C}_{R,m}^{(d)}\{1\}$, we use that $[k]_q = \prod_{e|k,e>1} \Phi_e(q)$ is invertible (because each $\Phi_e(q)$ is, if $d \nmid k$) and that $1-[\zeta]^m$ is invertible (because the order of $\zeta$ is invertible in $R$, and one only considers the $e^{th}$ $q$-ghost coordinates for $(e,N) = 1$, so that $1-\zeta^{\frac{1}{e}}$ is invertible in $R$).} These elements are compatible between different integers $m \ge 1$ by Construction~\ref{constructionfirstpolylogarithmformal}, and we call \emph{first $q$-polylogarithm class of $\zeta$} and denote by
    $$\Li_1^{(d)}([\zeta])_q \in \CR^{(d)}\{1\}$$
    the induced compatible system of elements. 
    Similarly, we denote by 
    $$\Li_1^{(d)}([\zeta])_q \in \mathrm{H}^1_{\CycSyn}(R,\Z(1)^{(d)})$$
    the element induced via the canonical map of abelian groups $\CR^{(d)}\{1\} \twoheadrightarrow \text{H}^1_{\CycSyn}(R,\Z(1)^{(d)})$.
\end{definition}

\begin{remark}[Comparison with \cite{gros_absolute_2023}]
    Let $k$ be a perfect field of characteristic $p$, and $R$ be the étale $\Z_p$-algebra $W(k)$. Following the work of Gros--Le Stum--Quirós \cite{gros_absolute_2023}, the absolute prismatic cohomology of $R^{(1)}:=R \otimes_{\Z_p} \Z_p[\zeta_p]$ can be computed by the complex
    $$\Prism_{R^{(1)}} \simeq \left[ R[\![q-1]\!]  \xlongrightarrow{\partial_{\Prism}} R[\![q-1]\!] \right]$$
    where the prismatic derivation $\partial_{\Prism}$ is given by $\partial_{\Prism}(q^k):=[pk]_q q^{k-1}$. This identification is compatible with Breuil--Kisin twists and the Nygaard filtration, thus inducing a similar computation of the absolute syntomic cohomology of $R^{(1)}$. Moreover, the first $q$-polylogarithm classes of Definition~\ref{definitionpolylogarithmcyclosyntomic} induce, for $d=p$ and via the $p$-adic comparison map of Remark~\ref{remarkcomparisontosyntomiccohomology}, natural $q$\nobreakdash-polylogarithm classes in the relative syntomic cohomology group $\text{H}^1_{q\text{syn}}(R,\Z_p(1))$. We expect that these classes should come from natural cohomology classes in the absolute syntomic cohomology group $\text{H}^1_{\text{syn}}(R^{(1)},\Z_p(1))$, via the canonical map $\text{H}^1_{\text{syn}}(R^{(1)},\Z_p(1)) \rightarrow \text{H}^1_{q\text{syn}}(R,\Z_p(1))$. However, we have not been able, using the previous prismatic derivation $\partial_{\Prism}$, to exhibit such cohomology classes. 
\end{remark}

\begin{lemma}\label{lemmaliftof1-zeta}
    Let $K$ be a number field, $R$ be the étale $\Z$-algebra $\mathcal{O}_K[\Delta_K^{-1}]$, $\zeta \in R$ be a root of unity of order dividing $N$. For every integer $m \ge 1$ satisfying $(m,N)=1$, the element
    $$\Pi_m^{\Hab}(1-[\zeta]) := \prod_{0 \le j < m}(1-q^j[\zeta]) \in \mathcal{H}_{R,m}$$
    is sent to the element $\Pi_m(1-\zeta) \in \qWm(R)$ via the canonical surjective map of commutative $\Z[q]$-algebras $\mathcal{H}_{R,m} \twoheadrightarrow \mathcal{H}_{R,m}/(q^m-1) \cong \qWm(R)$ (Remark~\ref{remarkHabiroqWitt}).
\end{lemma}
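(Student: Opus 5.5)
The plan is to compare both sides through $q$-ghost coordinates, where everything becomes an explicit computation. Since $\mathcal{H}_{R,m}/(q^m-1) \cong \qWm(R)$ and $R$ is flat over $\Z$, the $q$-ghost map $q\text{-}\gh : \qWm(R) \to \prod_{e|m} R[q]/\Phi_e(q)$ is injective (\cite[Lemma~2.23]{wagner_q-witt_2024}, or Lemma~\ref{lemmaqghostmapinjectiveHabiro} with all $n_e=1$). Under the identification of Remark~\ref{remarkHabiroqWitt}, the $q$-ghost coordinates of the image of an element $(f_e(q))_{e|m} \in \mathcal{H}_{R,m}$ are simply the reductions $f_e(q) \bmod \Phi_e(q)$; this compatibility is the same one used implicitly in Lemma~\ref{lemmaqghostmapinjectiveHabiro} and Construction~\ref{constructionhomotopyforfirstChernclass}. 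It therefore suffices to check, for every $e|m$, an identity in $R[q]/\Phi_e(q)$.

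The right-hand side is easy. By Notation~\ref{notationqteichmullerlift}, $\Pi_m(1-\zeta)$ has $e^{th}$ $q$-ghost coordinate $(1-\zeta)^{m/e}$.

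For the left-hand side, the element $\prod_{0\le j<m}(1-q^j[\zeta])$ has $e^{th}$ component $\prod_{0 \le j < m}(1-q^j\zeta^{1/e})$ by Notation~\ref{notationclassofzeta}. Here $\zeta^{1/e}$ denotes the compatible choice of roots, which exists because $(m,N)=1$ and the order of $\zeta$ divides $N$. In fact one can take $\zeta^{1/e}=\zeta^{a}$ with $ae\equiv 1$ modulo the order of $\zeta$, and this choice is compatible with the Frobenius maps. Modulo $\Phi_e(q)$, the variable $q$ behaves as a primitive $e^{th}$ root of unity, so the factors repeat with period $e$ in $j$, giving
$$\prod_{0\le j<m}(1-q^j\zeta^{1/e}) \equiv \Big(\prod_{0\le j<e}(1-q^j\zeta^{1/e})\Big)^{m/e}.$$
Next I would use the polynomial identity $\prod_{0\le j<e}(1-q^jX) \equiv 1-X^e$ in $\Z[q,X]/\Phi_e(q)$. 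This follows from $\prod_{j}(Y-q^j)=Y^e-1$ modulo $\Phi_e(q)$, which holds because $\Z[q]/\Phi_e(q) \cong \Z[\mu_e]$ is a domain in which the $q^j$ are the distinct $e^{th}$ roots of unity. Substituting $X=\zeta^{1/e}$ turns the product into $1-\zeta$, so the $e^{th}$ component is $(1-\zeta)^{m/e}$, matching the right-hand side.

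The main point needing care is the justification that the $q$-ghost coordinates of the image in $\qWm(R)$ are given by the componentwise reductions of the Habiro components $f_e$. I would deduce this from the construction in \cite[Theorem~2.9]{wagner_q-hodge_2025}, together with uniqueness via the universal property of $q$-FV-systems (Definition~\ref{definitionqghostmaps}). A secondary point is that $[\zeta]$ is a legitimate element of $\mathcal{H}_{R,m}$, which Notation~\ref{notationclassofzeta} already asserts.
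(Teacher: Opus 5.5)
Your proposal is correct and follows the paper's own route. Both reduce, via injectivity of the $q$-ghost map, to a congruence modulo $\Phi_e(q)$ for each $e|m$, and then use the period-$e$ repetition of the factors together with $\prod_{0\le j<e}(1-q^jX)\equiv 1-X^e$ to obtain $(1-\zeta)^{m/e}$. Your extra justifications, namely the proof of that polynomial identity, the compatible choice $\zeta^{1/e}$, and the compatibility of Wagner's isomorphism with $q$-ghost coordinates, spell out what the paper leaves implicit.
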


\begin{proof}
    Let $e \ge 1$ be an integer satisfying $e|m$, and in particular $(e ,N) = 1$. By \cite[Lemma~2.23]{wagner_q-witt_2024}, it suffices to prove that $\Pi_m^{\Hab}(1-[\zeta]) \equiv \Pi_m(1-\zeta)$ modulo $\Phi_e(q)$, which follows from the series of equalities
    $$\prod_{0 \le j < m}(1-q^j\zeta^{\frac{1}{e}}) = \prod_{0 \le a < \frac{m}{e}} \prod_{0 \le j < e}(1-q^{ae+j}\zeta^{\frac{1}{e}}) \equiv \prod_{0 \le j < e} (1-q^j \zeta^{\frac{1}{e}})^{\frac{m}{e}} \equiv (1-\zeta)^{\frac{m}{e}}$$
    where we use that $q$ is a primitive $e^{th}$ root of unity modulo $\Phi_e(q)$.
\end{proof}

\begin{proposition}\label{propositionkeycomputation}
    For any integers $d \ge 2$ and $m \ge 1$, the equality
    $$\frac{1}{d}\log\left(\frac{\prod_{0 \le j < m}{(1-q^j T)}^d}{\prod_{0 \le j < m}{(1-q^{jd} T^d)}}\right)\equiv -[m]_q \cdot \frac{1}{1-T^m}\sum_{\substack{0 < k < m \\ d \nmid k}} \frac{T^k}{[k]_q}$$
    holds in the commutative ring $\mathbb{Z}[q,T]\left[\frac{1}{1-T^m},\frac{1}{[k]_q},\frac{1}{1-q^jT}~\bigg|~\substack{0<k<m \\ d\nmid k},0 \le j < m\right]/[m]_q^2$.
\end{proposition}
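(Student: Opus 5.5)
The plan is to prove the identity first at the level of formal power series in $T$, with rational coefficients and genuine logarithms, and then to descend to the stated ring. Write $A(T) := \prod_{0 \le j < m}(1-q^jT)$ and $B(T) := \prod_{0 \le j < m}(1-q^{jd}T^d)$.

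\textbf{Step 1: expand the genuine logarithm.} In $\mathbb{Q}(q)[\![T]\!]$ we have
$$\ln\frac{A^d}{B} = -d\sum_{k\ge1}\frac{T^k}{k}\sum_{0\le j<m}q^{jk} + \sum_{k\ge 1}\frac{T^{dk}}{k}\sum_{0\le j<m}q^{jdk}.$$
Using $\sum_{0 \le j<m}q^{jk} = [mk]_q/[k]_q$, this equals
$$-d\sum_{k\ge1}\frac{T^k[mk]_q}{k[k]_q} + d\sum_{k\ge1}\frac{T^{dk}[mdk]_q}{dk[dk]_q}.$$
The second sum cancels exactly the terms with $d \mid k$ in the first sum, leaving
$$\ln\frac{A^d}{B} = -d\sum_{d\nmid k}\frac{T^k}{[k]_q}\cdot\frac{[mk]_q}{k}.$$

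\textbf{Step 2: reduce modulo $[m]_q^2$.} Write $[mk]_q = [m]_q[k]_{q^m}$. Then $[k]_{q^m} = k + (q^m-1)c_k(q)$, and $(q^m-1)$ is a multiple of $[m]_q$, so $[mk]_q/k \equiv [m]_q$ modulo $[m]_q^2$, up to the factor $1/k$ accompanying the error term. To handle this factor, I would run the argument over $\mathbb{Q}$, where $[m]_q^2\mathbb{Q}[q]\cap \mathbb{Z}[q] = [m]_q^2\mathbb{Z}[q]$ because $[m]_q$ is monic. This gives
$$\ln\frac{A^d}{B} \equiv -d[m]_q\sum_{d\nmid k}\frac{T^k}{[k]_q}.$$
Applying the periodicity computation of Construction~\ref{constructionfirstpolylogarithmformal} (valid modulo $[m]_q$, which suffices since the sum is multiplied by $[m]_q$), the right-hand side becomes $-d[m]_q\frac{1}{1-T^m}\sum_{0<k<m,\,d\nmid k}T^k/[k]_q$. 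In particular $x := A^d/B - 1$ is divisible by $[m]_q$, so $\ln(1+x) = x - x^2/2 + \cdots \equiv x$ modulo $[m]_q^2$; the higher terms $x^n/n$ are divisible by $[m]_q^n$ up to denominators, which is again handled over $\mathbb{Q}$. Hence $x \equiv -d[m]_q S$, where $S$ denotes the truncated $q$-polylogarithm. The statement's ``$\frac1d\log$'' then means exactly that $x = d\cdot(\text{RHS})$ modulo $[m]_q^2$, with division by $d$ understood as in Construction~\ref{constructionhomotopyforfirstChernclass}.

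\textbf{Step 3: descent to the stated ring.} This is the main obstacle: transporting a congruence proved in $\mathbb{Q}[q][\![T]\!]/[m]_q^2$ back to the localized ring $\mathbb{Z}[q,T][\ldots]/[m]_q^2$, which requires an injectivity statement. Rather than relying on $T$-adic completion (its kernel involves elements killed by units of the form $1+tT$, which is delicate), I would first try to check the identity on ``ghost coordinates'': since $[m]_q^2 = \prod_{1<e\mid m}\Phi_e(q)^2$ with pairwise coprime factors over $\mathbb{Q}$, and the ring is $\mathbb{Z}$-torsionfree (being flat over $\mathbb{Z}[q]/[m]_q^2$, as in the proof of Lemma~\ref{lemmaqghostmapinjectiveHabiro}), it suffices to verify the congruence modulo each $\Phi_e(q)^2$ after tensoring with $\mathbb{Q}$. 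Modulo $\Phi_e(q)^2$, writing $q = \zeta_e + \epsilon$ with $\epsilon^2 = 0$, both sides are explicit rational functions in $T$ whose denominators are products of the units $1-T^m$ and $1-q^jT$. A genuine identity of rational functions, which Steps 1--2 establish via power series expansion at $T=0$, therefore holds in the localized ring. This last reduction is where the care is needed.
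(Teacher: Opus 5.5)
Your Steps~1--2 are the paper's computation: expand $\log\prod_j(1-q^jT)$, cancel the terms with $d\mid k$, use $[m]_q[k]_{q^m}\equiv k[m]_q$ modulo $[m]_q^2$, then apply the periodicity of Construction~\ref{constructionfirstpolylogarithmformal}. Your Step~3 is a componentwise version of the paper's descent, which amounts to the injectivity of $\mathbb{Z}[q,T][S_1^{-1}]/[m]_q^2\to\mathbb{Q}[q][S_2^{-1}][\![T]\!]/[m]_q^2$. Your worry about $T$-adic completion is unfounded. Take $C=\mathbb{Z}[q][S_2^{-1}]/[m]_q^2$, which is $\mathbb{Z}$-torsionfree. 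A polynomial with constant term $1$ is a unit in $C[\![T]\!]$, hence a non-zero-divisor in $C[T]$, so $C[T][S_1^{-1}]$ embeds in $C[\![T]\!]\subseteq(C\otimes\mathbb{Q})[\![T]\!]$.

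\textbf{The gap.} Nothing in your argument uses $d\mid m$, and without it the statement is false; the paper's proof has the same omission. In Step~2 you multiply $[mk]_q/k\equiv[m]_q$ by $T^k/[k]_q$ for \emph{every} $k$ with $d\nmid k$, and the periodicity then silently drops the terms with $m\mid k$. Both steps need $[k]_q$ to be invertible in the ring where the congruence modulo $[m]_q^2$ lives. If $d\nmid m$, the index $k=m$ violates this, since $\Phi_m(q)$ is not inverted. Its term $-d\,[m]_{q^m}T^m/m$ is $\equiv -dT^m$ modulo $\Phi_m(q)$, while all terms with $k<m$ vanish there. Concretely, take $d=2$, $m=3$:
\begin{itemize}
\item the only inverted $[k]_q$ is $[1]_q=1$, and modulo $\Phi_3(q)$ the ring becomes $\mathbb{Z}[\omega][T][(1-T^3)^{-1}]$;
\item there $A=1-T^3$ and $B=1-T^6$, so $B$ is not even a unit, since its factor $1+T^3$ shares no root with $1-T^3$;
\item moreover $A^2-B=-2T^3(1-T^3)\neq0$, whereas the right-hand side is divisible by $[3]_q=\Phi_3(q)$.
\end{itemize}

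\textbf{The fix.} Add the hypothesis $d\mid m$. This is harmless for Theorem~\ref{theoremmainLi1cyclosyntomic}, since $\mathcal{C}^{(d)}_{R,m}=0$ unless $d\mid m$ (Remark~\ref{remarkuniversalpropertyqWittvectorsatd}). Then use it explicitly:
\begin{itemize}
\item After tensoring with $\mathbb{Q}$ and splitting modulo the $\Phi_e(q)^2$, every component with $d\nmid e$ is zero, because $e<m$ and $\Phi_e(q)\mid[e]_q$ is inverted.
\item In the surviving components $d\mid e\mid m$, every $[k]_q$ with $d\nmid k$ is a unit (as $e\nmid k$). This legitimises Step~2 in $(\mathbb{Q}[q]/\Phi_e(q)^2)[\![T]\!]$.
\item No multiple of $m$ is prime to $d$, so the periodicity loses nothing.
\item Your Step~3 claims all denominators are among $1-T^m$ and $1-q^jT$, but the left-hand side also has denominator $B=\prod_j\prod_{\eta^d=1}(1-\eta q^jT)$. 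It is invertible modulo $[m]_q$ because, for $d\mid e\mid m$, each $\eta\zeta_e^j$ is an $m$-th root of unity, so $1-\eta\zeta_e^jT$ divides $1-T^m$.
\end{itemize}
With these additions the argument is complete. Your explicit reading of $\frac1d\log$ as $A^d/B-1=d\cdot(\mathrm{RHS})$, with $\ln(1+x)\equiv x$ modulo $[m]_q^2$, is more careful than the paper's.
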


\begin{proof}
    We first prove that
    $$[m]_q^2\Z[q,T][S_1^{-1}] = [m]_q^2\Q[q][S_2^{-1}][\![T]\!] \cap \Z[q,T]$$
    where $S_1 \subseteq \Z[q,T]$ is the multiplicative set generated by $1-T^m$, $[k]_q$ for integers $0<k<m$ satisfying $d \nmid k$, and $1-q^jT$ for $0 \le j < m$, and where $S_2 \subseteq \Q[q]$ is the multiplicative set generated by $[k]_q$ for integers $0<k<m$ satisfying $d \nmid k$. The left hand side is a subset of the right hand side by construction. The other inclusion is a consequence of the fact that the intersection $[m]_q^2\Q[q][S_2^{-1}] \cap \Z[q]$ is equal to $\prod_{d|e|m} \Phi_e(q)^2 \Z[q]$. 
    
    Using the previous paragraph, it suffices to prove the desired equality in the bigger commutative $\Q[q][S_2^{-1}][\![T]\!]/[m]_q^2$, where the computation
    \[
    \log\left(\prod_{0 \le j < m}{(1-q^j T)}\right)=\sum_{0 \le j < m}{\log(1-q^j T)}=-\sum_{0 \le j < m}{\sum_{k\geq 1}{\frac{(q^j T)^k}{k}}}=-\sum_{k\geq 1}{\left(\frac{q^{mk}-1}{q^k-1}\right) \frac{T^k}{k}}
    \]
    makes sense. In particular, this implies that
    \begin{align*}
    \frac{1}{d}\log\left(\frac{\prod_{0 \le j < m}{(1-q^j T)}^d}{\prod_{0 \le j < m}{(1-q^{jd} T^d)}}\right) &=-\sum_{\substack{k \geq 1 \\ d\nmid k}}{\left(\frac{q^{mk}-1}{q^k-1}\right) \frac{T^k}{k}} \\
    &=-[m]_q\sum_{\substack{k\geq 1 \\ d\nmid k}}{\left(\frac{[k]_{q^m}}{[k]_q}\right) \frac{T^k}{i}}\\
    &\equiv-[m]_q\cdot \sum_{\substack{i>0 \\ d\nmid k}}{\frac{T^k}{[k]_q}}
    \end{align*}
    where we use that $[m]_q \cdot [k]_{q^m} = [m]_q \cdot k$ modulo $[m]_q^2$. The desired result is then a consequence of Construction~\ref{constructionfirstpolylogarithmformal}.
\end{proof}

\begin{theorem}\label{theoremmainLi1cyclosyntomic}
    Let $K$ be a number field, $R$ be the étale $\Z$-algebra $\mathcal{O}_K[\Delta_K^{-1}]$, $N$ be a multiple of $\Delta_K$, and $d \ge 2$ be an integer. For every root of unity $\zeta \in R\setminus\{\pm 1\}$,\footnote{For $\zeta = -1$, the same result up to replacing $R$ by $R[\tfrac{1}{2}]$, in order to ensure that $1-\zeta \in \G_m(R)$.} the cyclosyntomic first Chern class (Definition~\ref{definitioncyclosyntomicfirstChernclass}) sends the unit $1-\zeta \in \G_m(R)$ to the first $q$-polylogarithm class $\Li_1^{(d)}([\zeta])_q \in \mathrm{H}^1_{\CycSyn}(R,\Z(1)^{(d)})$ (Definition~\ref{definitionpolylogarithmcyclosyntomic}):
    $$c_1^{\CycSyn} : 1-\zeta \longmapsto -\Li_1^{(d)}([\zeta])_q.$$
\end{theorem}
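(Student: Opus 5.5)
The plan is to compute the class directly using the explicit formula of Corollary~\ref{corollaryformulaforfirstChernclass}, with the Habiro lift supplied by Lemma~\ref{lemmaliftof1-zeta}, and then evaluate the resulting expression by Proposition~\ref{propositionkeycomputation} with $T$ specialised to $[\zeta]$. Since $c_1^{\CycSyn}$ is defined as the inverse limit of the truncated classes (Definition~\ref{definitioncyclosyntomicfirstChernclass}), and the polylogarithm class is likewise a compatible system (Definition~\ref{definitionpolylogarithmcyclosyntomic}), it suffices to prove the equality at each truncation level $m$ with $(m,N)=1$, in $\CRm^{(d)}\{1\}$. Working there, rather than only in cohomology, is enough, and it is what we will do.

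\textbf{Step 1: choose the lift.} Fix such an $m$ and take $y := \Pi_m^{\Hab}(1-[\zeta]) = \prod_{0\le j<m}(1-q^j[\zeta])$, reduced to $\mathcal{H}_{R,m}/(q^m-1)^2$. By Lemma~\ref{lemmaliftof1-zeta}, the pair $(y,1-\zeta)$ lies in $E_m$, so Corollary~\ref{corollaryformulaforfirstChernclass} gives
$$c_1^{\CycSyn}(1-\zeta)_m = \tfrac1d\log\big(\tilde\Pi_d(y)/\Frob_d^{\Hab}(y)\big).$$

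\textbf{Step 2: compute the two terms.} First, $\Frob_d^{\Hab}(y) = \prod_{0\le j<m}(1-q^{jd}[\zeta]^d)$. This holds because $\Frob_d^{\Hab}$ is a ring map sending $q\mapsto q^d$ on ghost coordinates, and it sends $[\zeta]$ to $[\zeta]^d$: the $e$-th coordinate is $\zeta^{d/e} = (\zeta^{1/(e/d)})$, matching the definition. Second, I claim $\tilde\Pi_d(y) = y^d$ in $\mathcal{H}^{(d)}_{R,dm}/[m]_q[dm]_q$, viewing $y$ (the product over $0 \le j < m$) as an element of $\mathcal{H}_{R,dm}$. I would verify this on $q$-ghost coordinates using Corollary~\ref{corollaryinjectiveHabiromandm'} (adapted to the $(d)$-variant, as in Construction~\ref{constructionhomotopyforfirstChernclass}).
\begin{itemize}
\item For $e\mid m$, the defining formula gives $f_e^{d}$, which agrees with $y^d$ modulo $\Phi_e^2$.
\item For $e\mid dm$ with $e\nmid m$, we only need agreement modulo $\Phi_e(q)$. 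Here $\tilde\Pi_d(y)_e = y_{(e,m)}(q^{e/(e,m)})^{dm/[e,m]}$, which by the computation in Lemma~\ref{lemmaliftof1-zeta} equals $(1-\zeta)^{dm/e}$ modulo $\Phi_e$. The same lemma applied at level $dm$ shows that $y^d$ has $e$-th coordinate $(1-\zeta)^{dm/e}$ as well, after regrouping the $dm$ factors.
\end{itemize}
One must also check that the element $y$ is consistent across levels $m$ versus $dm$ with the ghost identification $[\zeta]_e=\zeta^{1/e}$. This is fine since $(dm,N)=1$ may fail, but we only need $e\mid dm$ with $d\mid e$ and $\Phi_e$ not inverted. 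Here I expect a subtlety: if $(d,N)>1$ then $\CR^{(d)}=0$ and the statement is trivial, so we may assume $(dm,N)=1$.

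\textbf{Step 3: apply the key computation.} The quotient $y^d/\Frob_d^{\Hab}(y)$ is then the image under $T\mapsto[\zeta]$ of the rational function in Proposition~\ref{propositionkeycomputation}. The elements $1-T^m$, $[k]_q$ ($d\nmid k$) and $1-q^jT$ become invertible in $\mathcal{H}^{(d)}_{R,dm}$; the footnote to Definition~\ref{definitionpolylogarithmcyclosyntomic} gives this, and for $1-q^j[\zeta]$ one checks each ghost coordinate is a unit because $\zeta^{1/e}$ has order prime to $e$. The congruence modulo $[m]_q^2$ in the proposition then yields, after transporting through division by $[d]_{q^m}$ (Lemma~\ref{lemmadivisionbym/e}\,(3)) and the identification of Remark~\ref{remarklimitovermNygaardandtwistedcyclotomic}, the element $-[m]_q\frac{1}{1-[\zeta]^m}\sum_{d\nmid k<m}[\zeta]^k/[k]_q = -\Li_1^{(d)}([\zeta])_q$.

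The main obstacle I expect is the bookkeeping between the moduli $[m]_q[dm]_q$ at level $dm$ and $[m]_q^2$ at level $m$: making sure that ``$\frac1d\log$'' in Construction~\ref{constructionhomotopyforfirstChernclass} matches the ``$\frac1d\log$'' computed formally in Proposition~\ref{propositionkeycomputation}. I would handle this by working entirely in ghost coordinates for $d\mid e\mid dm$, where both reduce to the same power-series identity.
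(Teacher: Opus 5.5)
Your framework is the paper's: truncate at $m$, use the lift $y=\prod_{0\le j<m}(1-q^j[\zeta])$ from Lemma~\ref{lemmaliftof1-zeta}, compute $\Frob_d^{\Hab}(y)=\prod_{0\le j<m}(1-q^{jd}[\zeta]^d)$, then invoke Proposition~\ref{propositionkeycomputation}. But the second claim of your Step~2 is false, and the argument depends on it.

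\textbf{The error.} The identity $\tilde\Pi_d(y)=y^d$ in $\mathcal{H}^{(d)}_{R,dm}/[m]_q[dm]_q$ fails at the ghost coordinates $e$ with $d\mid e\mid dm$ and $e\nmid m$. There $\tilde\Pi_d(y)_e\equiv(1-\zeta)^{dm/e}$ is correct. However, your ``regrouping'' computes the $e$-th coordinate of $\prod_{0\le j<dm}(1-q^j[\zeta])$, not that of $y^d=\big(\prod_{0\le j<m}(1-q^j[\zeta])\big)^d$. These agree only when $q^m\equiv 1 \pmod{\Phi_e(q)}$, i.e.\ when $e\mid m$. For example, take $\zeta$ of order $3$, $d=m=2$ and $e=4$, so that $q\mapsto i$ and $\zeta^{1/4}=\zeta$. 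The two $4$-th coordinates are then $(1-\zeta)^2(1-i\zeta)^2$ and $1-\zeta$, and these differ.

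This matters for the rest of the argument. Your claim would force $y^d\equiv\Frob_d^{\Hab}(y)\pmod{[dm]_q}$, which fails at these coordinates. So $\frac1d\log\big(y^d/\Frob_d^{\Hab}(y)\big)$ is not even defined by the recipe of Construction~\ref{constructionhomotopyforfirstChernclass}. Your proposed fix, ``working in ghost coordinates for $d\mid e\mid dm$'', keeps exactly the coordinates where nothing matches. Proposition~\ref{propositionkeycomputation} says nothing modulo $\Phi_e(q)$ for $e\nmid m$.

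\textbf{The missing step.} This is how the paper resolves the bookkeeping problem you flagged.
\begin{itemize}
\item The ring $\mathcal{H}^{(d)}_{R,dm}/[m]_q[dm]_q$ is $\Z$-flat (argue as in Lemma~\ref{lemmaqghostmapinjectiveHabiro}), hence $d$-torsionfree, so you may invert $d$.
\item Since $[d]_{q^m}\equiv d \pmod{[m]_q}$, the ideal $([m]_q^2,[d]_{q^m})$ becomes the unit ideal. By the Chinese remainder theorem the ring splits as the product of its reductions modulo $[m]_q^2$ and modulo $[d]_{q^m}$.
\item An element of $([dm]_q)=([m]_q[d]_{q^m})$ has zero second component. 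Division by $[d]_{q^m}$ therefore only sees the first factor, i.e.\ the coordinates with $d\mid e\mid m$.
\item On those coordinates $\tilde\Pi_d(y)_e=y_e^d$ holds trivially. Equivalently, $\tilde\Pi_d(y)\equiv y^d$ modulo $(q^m-1)^2$, which is the true version of your claim.
\item On $([m]_q)/([m]_q^2)$, division by $[d]_{q^m}$ agrees with division by $d$. This is what lets you use Proposition~\ref{propositionkeycomputation}, whose $\frac1d$ is a genuine division carried out over $\Q$.
\end{itemize}
With Step~2 replaced by this reduction, the rest of your argument goes through.
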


\begin{proof}
    First note that $1-\zeta$ is a unit in $R$. Indeed, if $g$ is the order of $\zeta$, then $1-\zeta$ divides $\Phi_g(1)$ in $R$, and $\Phi_g(1)$ divides $g$ in $\Z$ ($\Phi_g(1)$ is either equal to $p$ if $g=p^r$ is a prime power, or equal to $1$ otherwise), hence $1-\zeta$ divides $g$ in $R$. Moreover, because $g>2$, we know that the prime supports of $g$ and of $\Delta_{\Q(\zeta)}$ agree. Using that $\Delta_{\Q(\zeta)}$ divides $\Delta_K$ in $\Z$, this implies that $g$ is invertible in $R$, and so is $1-\zeta$.
    
    By Corollary~\ref{corollaryformulaforfirstChernclass} and Definition~\ref{definitionpolylogarithmcyclosyntomic}, it suffices to prove that for every integer $m \ge 1$ which is coprime to $N$ (and in particular to the order of $\zeta$), one has the equality
    $$\frac{1}{d}\log\Big(\frac{\tilde{\Pi}_d(\Pi_m^{\Hab}(1-\zeta))}{\Frob_d^{\Hab}(\Pi_m^{\Hab}(1-\zeta))}\Big) = -\Big( \frac{1}{1-[\zeta]^m} \sum_{\substack{0 < k < m \\ d \nmid k}} \frac{[\zeta]^k}{[k]_q}\Big) \cdot [m]_q$$
    in $\CRm^{(d)}\{1\}$, 
    where $\Pi_m^{\Hab}(1-\zeta) \in \mathcal{H}_{R,m}/(q^m-1)^2$ is any lift of $\Pi_m(1-\zeta) \in \qWm(R)$. By Lemma~\ref{lemmaliftof1-zeta}, one can choose the lift $\Pi_m^{\Hab}(1-\zeta)$ to be given by $\prod_{0 \le j < m} (1-q^j[\zeta])$, where the element $[\zeta] \in \mathcal{H}_{R,m}/(q^m-1)^2$ is defined in Notation~\ref{notationclassofzeta}. 
    
    Arguing as in the proof of Lemma~\ref{lemmaqghostmapinjectiveHabiro}, the commutative ring $\mathcal{H}_{R,dm}^{(d)}/[m]_q[dm]_q$ is flat over $\Z$, and in particular is $d$-torsionfree. This implies that it suffices to do the desired computation after inverting $d$ and thus, following Construction~\ref{constructionhomotopyforfirstChernclass}, to only consider the $e^{th}$ $q$-coordinates for integers $e \ge 1$ satisfying $d|e|m$. 
    By definition of the Habiro Frobenius $\Frob_d^{\Hab}$ (Construction~\ref{constructionHabiroFrobenii}), we then have
    $$\Frob_d^{\Hab}(\Pi_m^{\Hab}(1-\zeta)) = \Frob_d^{\Hab}\Big(\prod_{0 \le j < m} (1-q^j[\zeta])\Big) = \prod_{0 \le j < m} (1-q^{jd}[\zeta]^d).$$
    By definition of the lifted cyclotomic norm $\tilde{\Pi}_d$ (Proposition~\ref{propositionliftedcyclotomicpower}), we similarly have
    $$\tilde{\Pi}_d(\Pi_m^{\Hab}(1-\zeta)) = \tilde{\Pi}_d\Big(\prod_{0 \le j < m} (1-q^j[\zeta])\Big) = \prod_{0 \le j < m} (1-q^j[\zeta])^d$$
    where we use the fact that we restrict to the $e^{th}$ $q$-coordinates for integers $e \ge 1$ satisfying $d|e|m$. 
    In particular, this implies that
    $$\frac{1}{d}\log\Big(\frac{\tilde{\Pi}_d(\Pi_m^{\Hab}(1-\zeta))}{\Frob_d^{\Hab}(\Pi_m^{\Hab}(1-\zeta))}\Big) = \frac{1}{d}\log\Big(\frac{\prod_{0 \le j < m}(1-q^j[\zeta])^d}{\prod_{0 \le j < m}(1-q^{jd}[\zeta]^d)}\Big),$$
    and the desired result is then a consequence of Proposition~\ref{propositionkeycomputation}, which we can apply because the lift $\Pi_d^{\Hab}(1-\zeta) := \prod_{0 \le j < m}(1-q^j[\zeta])$ of $\Pi_m(1-\zeta)$ is a unit in $\mathcal{H}_{R,m}^{(d)}/[m]_q^2$.
\end{proof}

\begin{remark}
    Let $K$ be a number field, $d \ge 2$ be an integer, and $\zeta \in \nbrrg\setminus\{\pm 1\}$ be a root of unity. Using the equality $(1-\zeta) = -\zeta \cdot (1-\zeta^{-1})$, Theorem~\ref{theoremmainLi1cyclosyntomic} implies in particular that
    $$\Li_1^{(d)}([\zeta])_q = \Li_1^{(d)}([\zeta]^{-1})_q$$
    in $\text{H}^1_{\CycSyn}(\nbrrg,\Z(1)^{(d)})$.
    Here we use that the cyclosyntomic first Chern class vanishes at the root of unity $-\zeta \in R$, which is a consequence of Corollary~\ref{corollaryformulaforfirstChernclass}. Interestingly, while the general relation $\Li_1^{(d)}(T)_q = \Li_1^{(d)}(T^{-1})_q$ does not hold as abstract power series, it does hold in the commutative ring introduced in Proposition~\ref{propositionkeycomputation}. In particular, the previous relation at $[\zeta]$ already holds in the cyclotomic twist $\CR^{(d)}\{1\}$.
\end{remark}

\begin{remark}\label{remarkpadicqpolylogarithminsyntomiccohomology}
    Via the $p$-adic realisation map of Remark~\ref{remarkcomparisontosyntomiccohomology}, Theorem~\ref{theoremmainLi1cyclosyntomic} also implies that the first $q$-polylogarithm class $-\Li_1^{(p)}([\zeta])_q$ also appears naturally in the relative syntomic cohomology group $\text{H}^1_{q\text{syn}}(R,\Z_p(1))$, as the syntomic first Chern class of $1-\zeta$. To the best of our knowledge, even this $q$-refinement of the classical $p$-adic result is new, and complements the work of Anschütz and Le Bras \cite{anschutz_p-completed_2020}, where they proved that a $q$-deformation of the $p$-adic \emph{logarithm} appears naturally in the syntomic cohomology group $\text{H}^0_{\text{syn}}(R,\Z_p(1))$ of quasiregular semiperfectoid $\Z_p^{\Cyc}$-algebras $R$.
\end{remark}

	\bibliographystyle{alpha}
	
	{\footnotesize
\bibliography{biblio.bib}
}

\end{document}